\documentclass[final]{siamart220329}

\usepackage[noend]{algpseudocode}
\usepackage{amsmath,amssymb,amsopn}
\usepackage{bm}
\usepackage{enumitem}
\usepackage{mathtools}
\usepackage{tikz-cd}
\usepackage{listings}

\lstMakeShortInline|
\lstset{basicstyle=\ttfamily}

\newcommand{\stext}[1]{{\text{\small{#1}}}}

\newsiamremark{remark}{Remark}
\newsiamthm{assump}{Assumption}

\definecolor{highyellow}{RGB}{255,251,194}

\usetikzlibrary{shapes,decorations}
\tikzstyle{highbox} = [draw=red, fill=highyellow, very thick,
    rectangle, rounded corners, inner sep=12pt, inner ysep=12pt]
\tikzstyle{hightitle} =[fill=red, text=white]

\DeclareMathOperator{\Id}{Id}
\DeclareMathOperator{\Prob}{Prob}
\DeclareMathOperator{\ran}{ran}
\DeclareMathOperator{\spn}{span}
\DeclareMathOperator{\tr}{tr}
\DeclareMathOperator{\binlength}{length}
\DeclareMathOperator*{\argmax}{argmax}
\DeclareMathOperator{\NRMSE}{NRMSE}
\DeclareMathOperator{\AC}{AC}
\DeclareMathOperator{\mnth}{month}
\DeclareMathOperator{\mean}{\mathbb E}
\DeclareMathOperator{\var}{var}
\newcommand{\degr}{$^\circ$}
\newcommand{\ssa}{S_*(\mathfrak A)} 
\newcommand{\ssb}{S_*(\mathfrak B)} 
\newcommand{\scb}{S_C(\mathfrak B)} 
\newcommand{\ssbl}{S_*(\mathfrak B_L)} 
\newcommand{\ssw}{S_*(\mathfrak W)} 
\newcommand{\lf}{\mathsf F} 
\newcommand{\lh}{\mathsf h} 
\newcommand{\lx}{\mathsf x} 
\newcommand{\ly}{\mathsf y} 
\newcommand{\La}{\circled{\textcolor{blue}{\small{A}}}}
\newcommand{\Lq}{\circled{\textcolor{red}{\small{Q}}}}
\newcommand{\Lm}{\circled{\textcolor{violet}{\small{M}}}}
\newcommand{\Inputs}{\textbf{Inputs}}
\newcommand{\Outputs}{\textbf{Outputs}}
\newcommand{\Steps}{\textbf{Steps}}
\newcommand{\Training}{\textbf{Training phase}}
\newcommand{\Prediction}{\textbf{Prediction phase}}
\renewcommand{\Return}{\textbf{Return:} }
\renewcommand{\Require}{\emph{Require:} }
\newcommand{\tbm}[1]{\tilde{\bm{#1}}}
\newcommand{\hbm}[1]{\hat{\bm{#1}}}

\makeatletter
\usetikzlibrary{calc}
\DeclareRobustCommand{\circled}[1]{\tikz[baseline=(char.base)]{
    \node[shape=circle, draw, inner sep=0pt, 
        minimum height={\f@size*1},] (char) {\vphantom{WAH1g}#1};}}
\makeatother

\headers{Data Assimilation in Operator Algebras}{D. Freeman, D. Giannakis, B. Mintz, A. Ourmazd, and J. Slawinska}

\title{Data Assimilation in Operator Algebras\thanks{\funding{D.G.\ acknowledges support from the US National Science Foundation under grants 1842538 and DMS-1854383, the US Office of Naval Research under MURI grant N00014-19-1-242, and the US Department of Defense, Basic Research Office under Vannevar Bush Faculty Fellowship grant N00014-21-1-2946. D.C.F.\ is supported as a PhD student under the last grant. A.O.\ was supported by the US Department of Energy, Office of Science, Basic Energy Sciences under award DE-SC0002164 (underlying dynamical techniques), and by the US National Science Foundation under awards STC-1231306 (underlying data analytical techniques) and DBI-2029533 (underlying analytical models). J.S.\ acknowledges support from NSF EAGER grant 1551489.}}}

\author{David C. Freeman\thanks{Department of Mathematics, Dartmouth College, Hanover, NH 03755, USA (\email{david.c.freeman.gr@dartmouth.edu}, \email{dimitrios.giannakis@dartmouth.edu}, \email{brian.a.mintz.gr@dartmouth.edu}, \email{joanna.m.slawinska@dartmouth.edu}).} \and Dimitrios Giannakis\footnotemark[2] \and Brian Mintz\footnotemark[2] \and Abbas Ourmazd\thanks{Department of Physics, University of Wisconsin-Milwaukee, Milwaukee, 53211, USA (\email{ourmazd@uwm.edu})} \and Joanna Slawinska\footnotemark[2]}

\begin{document}

\maketitle

\begin{abstract}
    We develop an algebraic framework for sequential data assimilation of partially observed dynamical systems. In this framework, Bayesian data assimilation is embedded in a non-abelian operator algebra, which provides a representation of observables by multiplication operators and probability densities by density operators (quantum states). In the algebraic approach, the forecast step of data assimilation is represented by a quantum operation induced by the Koopman operator of the dynamical system. Moreover, the analysis step is described by a quantum effect, which generalizes the Bayesian observational update rule. Projecting this formulation to finite-dimensional matrix algebras leads to new computational data assimilation schemes that are (i) automatically positivity-preserving; and (ii) amenable to consistent data-driven approximation using kernel methods for machine learning. Moreover, these methods are natural candidates for implementation on quantum computers. Applications to data assimilation of the Lorenz 96 multiscale system and the El Ni\~no Southern Oscillation in a climate model show promising results in terms of forecast skill and uncertainty quantification. 
\end{abstract}

\section{Introduction}

Since its inception in weather forecasting~\cite{Cressman59} and object tracking problems~\cite{Kalman60}, sequential data assimilation, also known as filtering, has evolved into an indispensable tool in forecasting and uncertainty quantification of dynamical systems \cite{MajdaHarlim12,LawEtAl15}. In its essence, data assimilation is a Bayesian inference theory: Knowledge about the state of the system at time $t$ is described by a probability distribution $p_t$. The system dynamics acts on probability distributions, carrying along $p_t $ to a time-dependent family of distributions $p_{t,\tau}$, which can be used to forecast observables of the system at time $t + \tau$, $\tau \geq 0$. When an observation is made, at time $t+\Delta t$, the forecast distribution $ p_{t,\Delta t}$ is updated in an analysis step using Bayes' theorem to a posterior distribution $p_{t+\Delta t}$, and the cycle is repeated. 

In real-world applications, the Bayesian theoretical ``gold standard'' is seldom feasible to employ due to a variety of challenges, including high-dimensional nonlinear dynamics, nonlinear observation modalities, and model error. Weather and climate dynamics \cite{Kalnay03} represent a classical application domain where these challenges are prevalent due to the extremely large number of active degrees of freedom (which necessitates making dynamical approximations such as subgrid-scale parameterization) and nonlinear equations of motion and observation functions (which prevent direct application of Bayes' theorem). Addressing these issues has stimulated the creation of a broad range of data assimilation techniques, including variational \cite{Bannister16}, ensemble \cite{KarspeckEtAl18}, and particle \cite{VanLeeuwenEtAl19} methods. 

In this paper, we examine Bayesian data assimilation and its representation through finite-dimensional computational methods from a new algebraic perspective. Our formulation employs different levels of description, depicted schematically in \cref{figSchematic}. We begin by assigning to a measure-preserving dynamical flow $\Phi^t : X \to X$, $ t \in \mathbb R$, an \emph{algebra of observables} (complex-valued functions of the state) $\mathfrak A = L^\infty(X,\mu) $, where $X$ is the state space and $\mu$ the invariant measure. This algebra is a commutative, or abelian, von Neumann algebra \cite{Takesaki01} under pointwise function multiplication. The \emph{state space} of $\mathfrak A$, denoted as $S(\mathfrak A)$, is the set of continuous linear functionals $\omega: \mathfrak A \to \mathbb C$, satisfying the positivity condition $\omega( f^* f ) \geq 0 $ for all $f \in \mathfrak A$ and the normalization condition $\omega \bm 1 = 1$. Here, $^*$ denotes the complex conjugation of functions, and $\bm 1 $ is the unit of $\mathfrak A$,  $\bm 1(x) = 1$ for all $x\in X$. Every probability density $ p \in L^1(X,\mu) \equiv \mathfrak A_* $ induces a state $\omega_p \in S(\mathfrak A)$ that acts on $\mathfrak A$ as an expectation functional, $\omega_p f = \int_X f p \, d\mu$. Such states $\omega_p$ constitute the set of \emph{normal states} of $\mathfrak A$, denoted as $S_*(\mathfrak A)$.

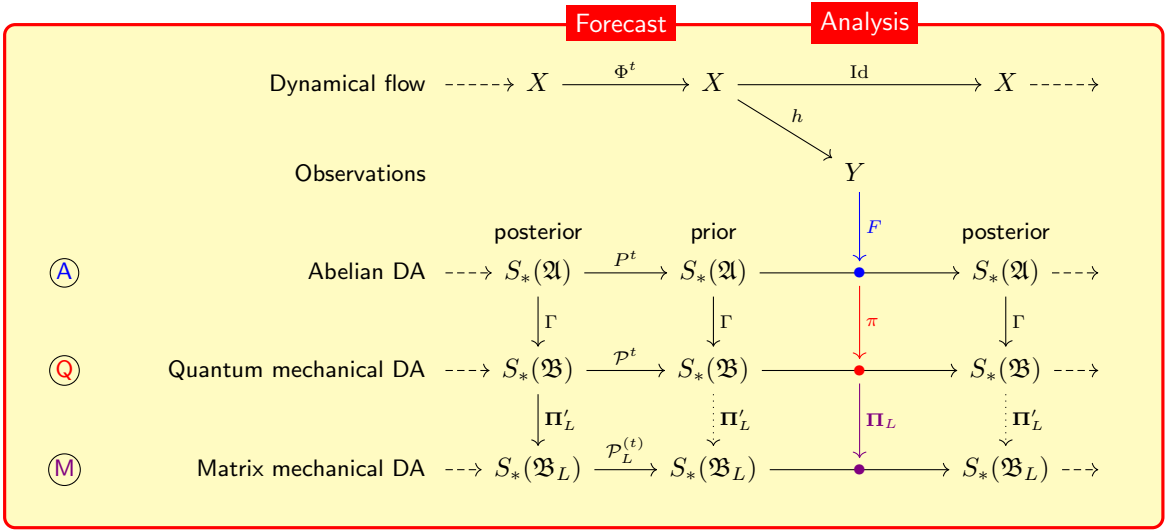
\begin{figure}
    \centering
    \sffamily
    \begin{tikzpicture}
        \node[highbox] (box){%
            \begin{tikzcd}[background color=highyellow]
                &\makebox[\widthof{\stext{Quantum mechanical DA}}][r]{ \stext{Dynamical flow}} \; \ar[r, dashed] & [-1em] X \ar[r,"\Phi^t"] & X \ar[rr,"\Id"] \ar[dr, "h"]  && X \ar[r, dashed] & [-1em] \  \\
                &\makebox[\widthof{\stext{Quantum mechanical DA}}][r]{\stext{Observations}} \; & & & Y \ \\[-1em]
                &&\stext{posterior} & \stext{prior} & & \stext{posterior} \\[-2em] 
                \La &\makebox[\widthof{\stext{Quantum mechanical DA}}][r]{\stext{Abelian DA}} \; \ar[r, dashed] & [-1em] \ssa \ar[r,"P^t"] \ar[d, "\Gamma"] & \ssa \ar[rr, "\bullet"{name=E, marking, blue}] \ar[d, "\Gamma"] && \ssa \ar[r, dashed] \ar[d,"\Gamma"] & [-1em] \ \\ 
                \Lq & \stext{Quantum mechanical DA} \; \ar[r, dashed] & [-1em] \ssb \ar[r,"\mathcal P^t"] \ar[d, "\bm\Pi_L'"] & \ssb \ar[rr,"\bullet"{name=F, marking, red}] \ar[d, "\bm\Pi_L'", dotted] && \ssb \ar[r, dashed] \ar[d, "\bm\Pi_L'", dotted] & [-1em] \ \\
                \Lm &\makebox[\widthof{\stext{Quantum mechanical DA}}][r]{\stext{Matrix mechanical DA}} \; \ar[r, dashed] & [-1em] \ssbl \ar[r,"\mathcal P^{(t)}_L"] & \ssbl \ar[rr,"\bullet"{name=Fl, marking, violet}, crossing over] && \ssbl \ar[r, dashed] & [-1em] \ 
                \ar[from=2-5, to=E, blue, "F"] 
                \ar[from=E, to=F, red, "\pi"] 
                \ar[from=F, to=Fl, violet, "\bm\Pi_L"] 
            \end{tikzcd}
        };
        \node[hightitle,left=-35] at (box.north){Forecast};
        \node[hightitle, left=93] at (box.north east){Analysis};
    \end{tikzpicture}
    \caption{Schematic representation of the abelian and non-abelian formulations of sequential data assimilation (DA), showing a forecast--analysis cycle. The top row of the diagram shows the dynamical flow $\Phi^t : X \to X $. The second row shows the observation map $ h : X \to Y$ used to update the state of the DA system in the analysis step. The rows labeled \La, \Lq, and \Lm\ show the abelian, infinite-dimensional non-abelian (quantum mechanical), and finite-dimensional non-abelian (matrix mechanical) DA systems, respectively.  In \La, the forecast step is carried out by the transfer operator $P^t : \ssa \to \ssa $ acting on states of the abelian algebra $\mathfrak A$. The analysis step (blue dot) is represented by an effect-valued map $ F : Y \to \mathfrak A$ that updates the state given observations in $Y$. In \Lq, the forecast step is carried out by the transfer operator $\mathcal P^t : \ssb \to \ssb$ acting on states of the non-abelian operator algebra $\mathfrak B$. The analysis step (red dot) is carried out by an effect $ \mathcal F : Y \to \mathfrak B$ given by the composition of $F$ with the regular representation $\pi$ of $\mathfrak A$ into $\mathfrak B$ (red arrow). The state space $\ssa$ is embedded into $\ssb$ by means of a map $\Gamma$ which is compatible with both forecast and analysis; see~\eqref{eqPiGamma} and~\eqref{eqBayesCompat}. This compatibility is represented by the commutative loops between \La\ and \Lq\ having $\Gamma$ as a vertical arrow. To arrive at the matrix mechanical DA, \Lm, we project $\mathfrak B$ into an $L^2$-dimensional operator algebra $\mathfrak B_L$ using a positivity-preserving projection $ \bm\Pi_L$. The composition of this projection with $\mathcal F$ leads to an effect $\mathcal F_L : Y \to \mathfrak B_L$ employed in the analysis step (purple arrow and dot). Moreover, $\bm\Pi_L$ induces a state space projection $\bm\Pi'_L : \ssb \to \ssbl $ and a projected transfer operator $\mathcal P_L^{(t)} : \ssbl \to \ssbl$ employed in the forecast step. Vertical dotted arrows indicate asymptotically commutative relationships that hold as $L \to \infty$.}
    \label{figSchematic}
\end{figure}

The next level of our framework, labeled \Lq\ in \cref{figSchematic}, generalizes data assimilation by embedding it in a \emph{non-abelian} operator algebra. Operator algebras form the mathematical backbone of quantum mechanics \cite{Emch09}---one of the most successful theories in physics. Quantum information theory and quantum probability provide a unified mathematical framework to characterize the properties of information transfer in both abelian and non-abelian systems through maps (quantum operations) acting on elements of the algebra and the corresponding states \cite{AlberEtAl01,Holevo01,Wilde13}. Our non-abelian formulation \Lq\ is based on the von Neumann algebra $\mathfrak B \equiv B(H)$ of bounded operators on the Hilbert space $H = L^2(X,\mu)$, equipped with operator composition as the algebraic product. The state space $S(\mathfrak B)$ is defined as the space of continuous, positive, normalized functionals analogously to $S(\mathfrak A)$; that is, every state $\omega \in S(\mathfrak B)$ satisfies $\omega(A^*A) \geq 0 $ and $\omega I = 1$, where $^*$ denotes the operator adjoint on $B(H)$ and $I$ is the identity operator. Analogously to $\mathfrak A$, $S(\mathfrak B)$ has a subset of normal states, $\ssb$, induced in this case by trace-class operators. Specifically, letting $ \mathfrak B_* \equiv B_1(H) $ denote the space of trace-class operators in $B(H)$, every positive operator $\rho \in \mathfrak B_*$ of unit trace induces a state $\omega_\rho\in \ssb$ such that $\omega_\rho A = \tr(\rho A )$. Such operators $\rho$ are called \emph{density operators}, and can be thought of as non-abelian analogs of probability densities $p \in \mathfrak A_*$. As we will see below, analogs of the transfer operator and Bayesian update described above for $\ssa$ are naturally defined for $\ssb$.

To arrive at a practical data assimilation algorithm, we project (discretize) the infinite-dimensional system on $\mathfrak B$ to a system on a finite-dimensional subalgebra $\mathfrak B_L \subset \mathfrak B$, which is concretely represented by an $L \times L$ matrix algebra (see \Lm\ in \cref{figSchematic}). We show that this approach naturally leads to computational techniques which are well-suited for assimilation of high-dimensional observables, while enjoying structure-preservation properties that cannot be obtained from finite-dimensional projections of abelian formulations. Moreover, by virtue of being rooted entirely in linear operator theory, these methods are amenable to a consistent data-driven formulation using kernel methods for machine learning.

\subsection{Previous work} Recently, an operator-theoretic framework for data assimilation, called quantum mechanical data assimilation (QMDA) \cite{Giannakis19b}, was developed using ideas from Koopman operator theory  \cite{Koopman31,EisnerEtAl15} in conjunction with the Dirac-von Neumann axioms of quantum dynamics and measurement \cite{Takhtajan08}. In QMDA, the state of the data assimilation system is a density operator $\rho_t$ acting on $ H=L^2(X,\mu)$ (rather than a probability density $p_t \in L^1(X,\mu)$ as in classical data assimilation), and the assimilated observables are multiplication operators in $B(H)$ (rather than functions in $L^\infty(X,\mu)$). Between observations, $\rho_t$ evolves under the induced action of the transfer operator, and the forecast distribution of observables is obtained as a quantum mechanical expectation with respect to $\rho_t$. Upon making an observation, the density operator $\rho_t$ is updated projectively as a von Neumann measurement, which is the quantum mechanical analog of Bayes' rule. Using kernel methods for Koopman operator approximation \cite{GiannakisEtAl15,Giannakis19,DasEtAl21}, QMDA has a data-driven formulation, which was shown to perform well in low-dimensional applications.

\subsection{\label{secContrib}Contributions} We provide a general algebraic framework that encompasses classical data assimilation and QMDA as particular instances (abelian and non-abelian, respectively). The principal distinguishing aspects of this framework are as follows. 

1. \emph{Dynamical consistency}. We employ a dynamically consistent embedding of abelian data assimilation \La\ into the non-abelian framework \Lq. As in ref.~\cite{Giannakis19b}, observables in $\mathfrak A$ are mapped into multiplication operators in $\mathfrak B$, but here we also employ an embedding $\Gamma : \ssa \to \ssb$ that is compatible with the transfer operators on $\ssa$ and $\ssb$ (see the commutative loops between \La\ and \Lq\ in \cref{figSchematic}). This allows us to study QMDA in relation to the underlying classical theory, and establish the consistency between the two approaches.        

2. \emph{Effect system}. In both the abelian and non-abelian settings, the analysis step given observations in a space $Y$ acquired through an observation map $ h : X \to Y$  is carried out using \emph{quantum effects} (loosely speaking, algebra-valued logical predicates) \cite{AlberEtAl01}. In the abelian case, the effect $F : Y \to \mathfrak A$ is induced by a kernel feature map. In the non-abelian setting, $F$ is promoted to an operator-valued feature map $\mathcal F : Y \to \mathfrak B$; see the column in the schematic of \cref{figSchematic} labeled ``Analysis''. Our use of feature maps enables assimilation of data of arbitrarily large dimension,  overcoming an important limitation of the original QMDA scheme \cite{Giannakis19b} (which becomes prohibitively expensive for high-dimensional observation maps). 

3. \emph{Positivity-preserving discretization}. The discretization procedure leading to the finite-dimensional scheme \Lm\  has the important property that positive elements of $\mathfrak B$ are mapped into positive elements of $\mathfrak B_L$. Moreover, the transfer operator on $\ssb$ is mapped into a completely positive, trace-non-increasing map, so the finite-dimensional data assimilation system is a quantum operation. We call this system ``matrix mechanical''. By virtue of these properties, the matrix mechanical system preserves the sign of sign-definite observables of the original system. Relevant examples include positive physical quantities such as mass and temperature, but also statistical quantities such as probability density which are useful for uncertainty quantification. We emphasize that the approach of \emph{first} embedding classical data assimilation in $\mathfrak A$ to the non-abelian operator setting of $\mathfrak B$ and \emph{then} projecting to the finite-dimensional system on $\mathfrak B_L$ is important in ensuring positivity preservation.

4. \emph{Data-driven formulation}. The matrix mechanical system \Lm\ admits a data-driven approximation in which all operators are represented in a kernel eigenbasis learned from time-ordered training data, without requiring a priori knowledge of the equations of motion. In the limit of large training data, the predictions made by the data-driven data assimilation system converge to those made by the system on $\mathfrak B_L$, which in turn converge to the infinite-dimensional system \Lq\ on $\mathfrak B$  as $L\to\infty$.          

5. \emph{Route to quantum computing}. The matrix mechanical scheme is well-suited for implementation on a quantum computer. Previous work \cite{GiannakisEtAl22} has shown that Koopman operators of measure-preserving ergodic dynamical systems can be approximated using shallow quantum circuits, offering an exponential computational advantage over classical deterministic algorithms for Koopman operator approximation. Our approach provides a novel route to quantum algorithms that sequentially alternate between unitary evolution and projective measurement to perform inference and prediction of classical dynamics with quantum computers. 

\section{Embedding data-assimilation in operator algebras}

Consider a dynamical flow $\Phi^t : X \to X $, $t \in \mathbb R$, on a completely metrizable, separable space $X$ with an ergodic, invariant, Borel probability measure $\mu$. The flow induces Koopman operators $U^t : f \mapsto f \circ \Phi^t$, which are isomorphisms of the $L^p(X,\mu)$ spaces with $ p \in [1,\infty]$. The flow also induces transfer operators $P^t : L^p(X,\mu)^* \to L^p(X,\mu)^*$ on the dual spaces $L^p(X,\mu)^*$ of $L^p(X,\mu)$, given by the adjoint of the Koopman operator, $P^t \gamma = \gamma \circ U^t$. Under the canonical identification of $L^p(X,\mu)^*$, $p \in [1,\infty)$, with finite, complex Borel measures $\gamma$ with densities in $ L^q(X,\mu) $, $\frac{1}{p}+\frac{1}{q}=1$, the transfer operator is identified with the inverse Koopman operator; that is, for $\gamma \in L^p(X,\mu)^*$ with density $\varrho= \frac{d\gamma}{d\mu} \in L^q(X,\mu)$, $\gamma_t := P^t \gamma $ has density $ \varrho_t = \frac{d\gamma_t}{d\mu} \in L^q(X,\mu)$ with $\varrho_t = U^{-t} \varrho$. In what follows, $\lVert f \rVert_{L^p(X,\mu)} = (\int_X \lvert f \rvert^p \, d \mu)^{1/p}$ and $\lVert f \rVert_{L^\infty(X,\mu)} = \lim_{p\to\infty} \lVert f \rVert_{L^p(X,\mu)}$ denote the standard $L^p(X,\mu)$ norms for $p \in [1, \infty)$ and $p=\infty$, respectively. 

Among the $L^p(X,\mu)$ spaces, $H := L^2(X,\mu)$ is a Hilbert space and $\mathfrak A := L^\infty(X,\mu)$ is an abelian von Neumann algebra with respect to function multiplication and complex conjugation. In particular, for any two elements $f,g\in \mathfrak A$ we have 
\begin{equation}
    \label{eqAAlg}
    \lVert f g \rVert_{\mathfrak A} \leq \lVert f \rVert_{\mathfrak A} \lVert g \rVert_{\mathfrak A}, \quad \lVert f^* f \rVert_{\mathfrak A} = \lVert f \rVert^2_{\mathfrak A}, 
\end{equation}
making $\mathfrak A$ a $C^*$-algebra, and in addition $\mathfrak A$ has a predual $\mathfrak A_* := L^1(X,\mu)$ (i.e., a Banach space for which $\mathfrak A$ is the dual), making it a von Neumann algebra. We let $ \langle f, g \rangle = \int_X f^* g \, d\mu$ denote the inner product on $H$. On $H$, the Koopman operator $U^t$ is unitary, $U^{t*} = U^{-t}$. 

\subsection{Embedding observables}

Let $ \mathfrak B := B(H)$ be the space of bounded operators on $H$, equipped with the operator norm, $ \lVert A \rVert_{\mathfrak B} = \sup_{f \in H} \frac{\lVert A f \rVert_H}{\lVert f \rVert_H}$. This space is a non-abelian von Neumann algebra with respect to composition and adjoint of operators. That is, for any two operators $A,B\in \mathfrak B$ we have
\begin{displaymath}
    \lVert A B \rVert_{\mathfrak B} \leq \lVert A \rVert_{\mathfrak B} \lVert B \rVert_{\mathfrak B}, \quad \lVert A^* A \rVert_{\mathfrak B} = \lVert A \rVert^2_{\mathfrak B}, 
\end{displaymath}
which is the non-abelian analog of~\eqref{eqAAlg} making $\mathfrak B$ a $C^*$-algebra. Moreover, $\mathfrak B$ has a predual, $\mathfrak B_* := B_1(H)$, making it a von Neumann algebra. Here, $B_1(H) \subseteq B(H)$ is the space of trace-class operators on $H$, equipped with the trace norm, $\lVert A \rVert_1 := \tr \sqrt{A^*A}$, which can be thought of as a non-abelian analog of $L^1(X,\mu)$. The unitary group of Koopman operators $U^t$ on $H$ induces a unitary group $\mathcal U^t : \mathfrak B \to \mathfrak B$ (i.e., a group of linear maps mapping unitary operators to unitary operators), which acts by conjugation, i.e., 
\begin{equation}
    \label{eqHeisenberg}
    \mathcal U^t A = U^t A U^{t*}.
\end{equation}

The abelian algebra $\mathfrak A$ embeds isometrically into $\mathfrak B$ through the map $\pi : \mathfrak A \to \mathfrak B$, such that $ \pi f $ is the multiplication operator by $f$, $(\pi f) g = f g$. This map is injective, and satisfies $\pi(fg) = (\pi f)(\pi g)$, $\pi(f^*) = (\pi f)^*$ for all $ f,g \in \mathfrak A$. Thus, $\pi$ is a $^*$-representation, preserving the von Neumann algebra structure of $\mathfrak A$. The representation $\pi$ is also compatible with Koopman evolution, in the sense that $\mathcal U^t \circ \pi = \pi \circ U^t$
holds for all $t \in \mathbb R$. Equivalently, we have the commutative diagram
\begin{displaymath}
    \begin{tikzcd}
        \mathfrak A \ar[r,"U^t"] \ar[d,"\pi",swap] & \mathfrak A \ar[d,"\pi"] \\
        \mathfrak B \ar[r,"\mathcal U^t"] & \mathfrak B
    \end{tikzcd},
\end{displaymath}
which shows that $\pi$ provides a dynamically consistent representation of observables of the dynamical system in $\mathfrak A$ as elements of the non-abelian operator algebra $\mathfrak B$. In \cref{figSchematic}, we refer to the level of description involving $\mathfrak B$ as quantum mechanical due to the fact that $C^*$-algebras of operators on Hilbert space are employed in the algebraic formulation of quantum mechanics \cite{Emch09}. In particular, \eqref{eqHeisenberg} is mathematically equivalent to the Heisenberg picture for the unitary evolution of quantum observables (here, under the Koopman operator).     

\subsection{Embedding states} 

A dual construction to the representation $\pi: \mathfrak A \to \mathfrak B$ of observables can be carried out for the spaces of normal states $\ssa$ and $\ssb$. Let $\omega_p$ be the normal state in $\ssa$ induced by the probability density $p \in \mathfrak A_*$. Since $p$ is a positive function with $\lVert p \rVert_{\mathfrak A_*} = 1$, we have that $\sqrt p $ is a real unit vector in $ H $, and thus $ \rho = \langle \sqrt p, \cdot \rangle \sqrt p $ is a rank-1 orthogonal projection. Every such projection is a density operator inducing a normal state $\omega_\rho \in \ssb $ such that 
\begin{equation}
    \label{eqGamma}
    \omega_\rho A = \tr (\rho A) = \langle \sqrt p, A \sqrt p \rangle, \quad \forall A \in \mathfrak B. 
\end{equation}
Such states $\omega_\rho$ induced by unit vectors in $H$ are called \emph{vector states}. In fact, $\omega_\rho$ is a \emph{pure state}, i.e., it is an extremal point of the state space $S(\mathfrak B)$ (which is a convex set). Defining the map $\Gamma: \ssa \to \ssb$ such that $\Gamma(\omega_p) = \omega_\rho$, one can readily verify that $\Gamma$ is compatible with the regular representation $\pi$. That is, for every observable $f \in \mathfrak A$ and probability density $p \in \mathfrak A_*$, we have 
\begin{equation}
    \label{eqPiGamma}
    \omega_p f = \Gamma(\omega_{p})(\pi f). 
\end{equation}

Next, analogously to the transfer operator $P^t : S(\mathfrak A) \to S(\mathfrak A)$ given by the adjoint of the Koopman operator $U^t$ on $\mathfrak A$, we define $\mathcal P^t : S(\mathfrak B) \to S (\mathfrak B)$ as the adjoint of $ \mathcal U^t : \mathfrak B \to \mathfrak B$ from~\eqref{eqHeisenberg}, $ \mathcal P^t \omega = \omega \circ \mathcal U^t$. Note that $\mathcal U^t$ and $\mathcal P^t$ form a dual pair, i.e., $(\mathcal P^t \omega) A = \omega(\mathcal U^t A) $ for every state $\omega \in S(\mathfrak B)$ and element $A \in \mathfrak B$. Moreover, if $\omega_\rho \in \ssb$ is a normal state induced by a density operator $\rho \in \mathfrak B_*$, then $\mathcal P^t \omega_\rho = \omega_{\rho_t}$ where $\rho_t$ is the density operator given by $\rho_t = \mathcal U^{-t} \rho = U^{t*} \rho U^t$. 

In quantum mechanics, the evolution $\rho \mapsto \rho_t$ is known as the Schr\"odinger picture, and it is the dual of the Heisenberg picture from~\eqref{eqHeisenberg}. In the particular case that $\rho = \langle \xi, \cdot \rangle \xi$ is a vector state induced by $\xi \in H$ (which would be called a wavefunction in quantum mechanical language), we have $\rho_t = \langle \xi_t, \cdot \rangle \xi_t$, where $\xi_t = U^{t*} \xi $. Using this fact and~\eqref{eqGamma}, it follows that $ \Gamma $ is compatible with the evolution on $\ssa$ and $\ssb$ under the transfer operator; that is, $\mathcal P^t \circ \Gamma =  \Gamma \circ P^t$. This relation is represented by the commutative diagram 
\begin{equation}
    \label{eqCommutP}
    \begin{tikzcd}
        \ssa \ar[r,"P^t"] \ar[d,"\Gamma",swap] & \ssa \ar[d,"\Gamma"] \\
        \ssb \ar[r,"\mathcal P^t"] & \ssb
    \end{tikzcd},
\end{equation}
which also captures the correspondence between the abelian and quantum mechanical forecast steps in \cref{figSchematic}. 

\subsection{Probabilistic forecasting} 

In both abelian and non-abelian data assimilation, we can describe probabilistic forecasting of observables of the dynamical system using the formalism of \emph{positive operator-valued measures} (POVMs) \cite{DaviesLewis70}. First, we recall that an element $a$ of a $C^*$-algebra $\mathfrak W$ is (i) \emph{self-adjoint} if $a^* = a$; (ii) \emph{positive} (denoted as $a \geq 0 $) if $a = b^* b $  for some $ b \in \mathfrak W $; and (iii) a \emph{projection} if $a^*=a=a^2$. Supposing that $\mathfrak W$ is also a von Neumann algebra, a map $E : \Sigma \to \mathfrak W$ on the $\sigma$-algebra of a measurable space $(\Omega,\Sigma)$ is said to be a POVM if (i) for every set $S \in \Sigma$, $E(S) \geq 0$; (ii) $E(\Omega) = I$, where $I$ is the unit of $\mathfrak W$; and (iii) for every countable collection $S_1,S_2,\ldots$ of disjoint sets in $\Sigma$, $E(\bigcup_i S_i) = \sum_i E(S_i)$, where the sum converges in the weak-$^*$ topology of $\mathfrak W$ (i.e., for every element $\gamma$ of the predual $\mathfrak W_*$, we have $E(\bigcup_i S_i) \gamma = \lim_{n\to\infty} \sum_{i=1}^n E(S_i) \gamma $.) Together, these properties imply that for every $\gamma \in \mathfrak W_*$, the map $\mathbb P_{E,\gamma} : \Sigma \to \mathbb C$ given by 
\begin{equation}
    \label{eqProbMeasure}
    \mathbb P_{E,\gamma}(S) = E(S)\gamma 
\end{equation}
is a complex normalized measure. In particular, if $\gamma$ induces a normal state $ \omega_\gamma \in \ssw $, then $\mathbb P_{E,\gamma}$ is a probability measure on $\Omega$. We say that the POVM $E$ is a \emph{projection-valued measure} (PVM) if $E(S)$ is a projection for every $ S \in \Sigma$. 

In quantum mechanics, a triple $(\Omega,\Sigma,E)$ where $E$ is a POVM is referred to as an \emph{observable}. We alert the reader to the fact that in dynamical systems theory an observable is generally understood as a function $f: X \to V$ on state space $X$ taking values in a vector space, $V$. Thus, in situations where the space of dynamical observables forms an algebra (e.g., the  $\mathfrak A = L^\infty(X,\mu)$ algebra corresponding to $V = \mathbb C$), the term ``observable'' is overloaded and its meaning must be understood from the context. 

Given a POVM $(\Omega,\Sigma,E)$ as above, and a bounded, measurable function $ u : \Omega \to \mathbb C$, we define the integral $\int_\Omega u(\omega)\, dE(\omega) $ as the unique element $a$ of $\mathfrak W$ such that for every $\gamma \in \mathfrak W_*$, $ a \gamma = \int_\Omega u(\omega)\, d\mathbb P_{E,\gamma}(\omega)$.  If $a$ is a self-adjoint element of $\mathfrak W$, i.e., $a^*=a$, the spectral theorem states that there exists a unique PVM $E : \mathcal B(\mathbb R) \to \mathfrak W $ on the Borel $\sigma$-algebra $\mathcal B(\mathbb R)$ of the real line such that $ a = \int_{\mathbb R} \omega\, dE(\omega)$. 

In abelian data assimilation, the self-adjoint elements are the real-valued functions $ f $ in the von Neumann algebra $ \mathfrak A$ (i.e., the real-valued, essentially bounded observables in the dynamical systems sense), and every such $f$ has an associated PVM $E_f : \mathcal B(\mathbb R) \to \mathfrak A$. Explicitly, we have $E_f(S) = \chi_{f^{-1}(S)}$, where $ \chi_{f^{-1}(S)} : X \to \mathbb R $ is the characteristic function of the preimage $f^{-1}(S) \subseteq X$. If, at time $t$, the data assimilation system is in a normal state $\omega_{p_t} \in \ssa$ induced by a probability density $p_t \in \mathfrak A_*$, then the forecast distribution for $f$ at lead time $\tau \geq 0$  is given by $\mathbb P_{f,t,\tau} \equiv \mathbb P_{E_f,p_{t,\tau}}$, where $p_{t,\tau}$ is the probability density associated with the normal state $\omega_{p_{t,\tau}} = P^\tau \omega_{p_t}$. 

The forecast distribution $\mathbb P_{f,t,\tau}$ is equivalent to the distribution obtained via classical probability theory. That is, given an observable $f \in L^\infty(X,\mu)$, the density $p_{t,\tau} \in L^1(X,\mu) $ induces a probability distribution on $\mathbb R$, such that for every set $S \in \mathcal B(\mathbb R)$, $\Prob (S) = \int_{f^{-1}(S)} p_{t,\tau}\, d\mu$ is the probability that $f$ takes values in $S$. It follows by definition of $\mathbb P_{f,t,\tau}$ that $\Prob(S) = \mathbb P_{f,t,\tau}(S)$.

In the non-abelian setting of $\mathfrak B$, the spectral theorem states that for every self-adjoint operator $A \in \mathfrak B$, there exists a unique PVM $E_A : \mathcal B(\mathbb R) \to \mathfrak B$, such that $A = \int_{\mathbb R} y\, dE_A(y)$. If, at time $t$, the non-abelian data assimilation system is in a normal state $ \omega_{\rho_t}$ induced by a density operator $\rho_t \in \mathfrak B_*$, then the forecast distribution for $A$ at lead time $\tau \geq 0$ is given by $\mathbb P_{A,t,\tau} \equiv \mathbb P_{E_A,\rho_{t,\tau}}$, where $\rho_{t,\tau}$ is the density operator associated with $\omega_{\rho_{t,\tau}} = \mathcal P^\tau \omega_{\rho_t}$. This distribution is compatible with the embeddings of states $\Gamma : \ssa \to \ssb$ and observables $\pi: \mathfrak A \to \mathfrak B$ introduced above. That is, for every observable $f \in \mathfrak A$, probability density $p_{t,\tau} \in \ssa$, and Borel set $S \in \mathcal B(\mathbb R)$ we have $\mathbb P_{f,p_{t,\tau}}(S) = \mathbb P_{\pi f,\rho_{t,\tau}}(S)$, where $\Gamma(\omega_{p_{t,\tau}}) = \omega_{\rho_{t,\tau}}$.

\subsection{Representing observations by effects}

For a unital $C^*$-algebra $\mathfrak W$, an \emph{effect} is an element $e \in \mathfrak W$ satisfying $0 \leq e \leq I$. Intuitively, one can think of effects as generalizations of logical truth values, used to model outcomes of measurements or observations \cite{LeiferSpekkens13,JacobsZanasi16}. In Boolean logic, truth values lie in the set $\{ 0, 1 \}$. In fuzzy logic, truth values are real numbers in the interval $[0,1]$. In unital $C^*$-algebras, the analogs of truth values are elements $e$ satisfying $0 \leq e \leq I$ \cite{Gudder07}. We denote the set of effects in a $C^*$-algebra $\mathfrak W$ as $ \mathcal E(\mathfrak W)$. It can be shown that $\mathcal E(\mathfrak W)$ is a convex space, whose extremal points are projections. Given a state $\omega \in S(\mathfrak W)$ and an effect $e \in \mathcal E(\mathfrak W)$, we call the number $ \omega e \in [0, 1 ] $ as the \emph{validity} of $e$. Note that every effect $e \in E(\mathfrak W)$ induces a binary POVM $E : \{ 0, 1 \} \to \mathfrak W$ such that $E(\{ 1 \}) = e $ and $E(\{ 0 \} ) = I - e$.

Suppose now that $\mathfrak W$ is a von Neumann algebra, let $\omega_\rho \in \ssw$ be a normal state induced by an element $\rho \in \mathfrak W_*$, and let $e \in \mathcal E(\mathfrak W)$ be an effect. If the validity $\omega e$ is nonzero, we can define the conditional state $\omega_\rho\rvert_e \in \ssw $ as the normal state induced by $\rho\rvert_e \in \mathfrak W_*$, where
\begin{equation}
    \label{eqQBayes}
    \rho\rvert_e = \frac{\sqrt e p \sqrt e}{ \omega_p e}.
\end{equation}
The map $ \omega_\rho \mapsto \omega_\rho\rvert_e$ generalizes the Bayesian conditioning rule employed in the analysis step of classical data assimilation. 

As a concrete example, let $\mathfrak W = \mathfrak A$, let $S \in \mathcal B(X)$ be a measurable set (i.e., $S$ is an event), and let $\chi_S : X \to \{ 0, 1 \} $ be the characteristic function of $S$. According to Bayes' theorem, if $ p \in \mathfrak A_*$ is a probability density and $\int_S p \, d\mu > 0$, the conditional density of $p$ given $S$ is   
\begin{equation}
    \label{eqBayesAbelian}
    q = \frac{p \chi_S}{ \int_X p \chi_S \, d\mu} = \frac{\sqrt{\chi_S} p \sqrt{\chi_S}}{ \int_X p \chi_S \, d\mu}. 
\end{equation}
Since $\chi_S(x) \in \{ 0, 1 \} $ for every $x \in X$, it follows that $\chi_S$ is an effect in $\mathfrak A$, and since $ \int_X p \chi_S \, d\mu = \omega_p \chi_ S$, the Bayesian formula above is a special case of~\eqref{eqQBayes} with $p\rvert_e = q$. Note that to obtain the second equality in~\eqref{eqBayesAbelian} we made use of the commutativity of function multiplication, which does not hold in a non-abelian algebra. 

An important compatibility result between effects in the abelian algebra $\mathfrak A$ and effects in the non-abelian algebra $\mathfrak B$ is as follows: The regular representation $\pi : \mathfrak A \to \mathfrak B $ maps the effect space $\mathcal E(\mathfrak A)$ into the effect space $\mathcal E(\mathfrak B)$. As a result, and by virtue of \eqref{eqPiGamma}, for every normal state $ \omega_p \in \ssa $ and effect $e \in \mathcal E(\mathfrak A)$, the conditioned state $\omega_p \rvert_e$ satisfies 
\begin{equation}
    \label{eqBayesCompat}
    \Gamma (\omega_p\rvert_e) = (\Gamma \omega_p) \rvert_{\pi e}.
\end{equation}
This means that conditioning by effects in $\mathcal E(\mathfrak A)$ consistently embeds to conditioning by effects in $\mathcal E(\mathfrak B)$.

Next, let $Y$ be a set. In first-order logic, a predicate is a map $F : Y \to \{ 0, 1 \}$ such that $F(y)=1$ means that the proposition $F(y)$ is true, and $F(y)=0$ means that it is false. In fuzzy logic, predicates are generalized to maps $F: Y \to [0,1]$. In quantum logic, predicates are represented by effect-valued maps $ F : Y \to \mathcal E( \mathfrak W)$. Applying~\eqref{eqQBayes} for $e = F(y)$ leads to the update rule $ p \mapsto p\rvert_{F(y)} $, which represents the conditioning of the normal state associated with $p$ by the truth value of the proposition $F(y)$ associated with $ y \in Y$.    

In our algebraic data assimilation framework, we use an effect-valued map to carry out the analysis step given observations of the system in a space $Y$ (see \cref{figSchematic}). Specifically, let $h : X \to Y $ be a measurable observation map, such that $y = h(x)$ corresponds to the assimilated data given that the system is in state $x \in X$. Let $\psi : Y \times Y \to [0,1]$ be a measurable kernel function on $Y$, taking values in the unit interval. Every such kernel induces an effect-valued map $F: Y \to \mathcal E(\mathfrak A)$ given by $F(y) = \psi(y,h(\cdot))$. Possible choices for $\psi$ include bump kernels---in such cases, $F(y)$ can be viewed as a relaxation of a characteristic function $\chi_S$ of a set $S$ containing $h^{-1}(\{y\})$ (see~\eqref{eqKBump}). 

If, immediately prior to an observation at time $t + \Delta t$, the abelian data assimilation system has state $\omega_{p_{t,\Delta t}} \in \ssa$ (recall that $p_{t,\Delta t}$ is the forecast density for lead time $\Delta t$ initialized at time $t$), and $F(y)$ has nonzero validity with respect to  $\omega_{p_{t,\Delta t}}$, our analysis step updates $\omega_{p_{t,\Delta t}}$ to the conditional state $ \omega_{p_{t,\Delta t}} \rvert_{F(y)} \equiv \omega_{p_{t+\Delta t}} $ using~\eqref{eqQBayes}. In the non-abelian setting, we promote $F$ to the operator-valued function $\mathcal F : Y \to \mathcal E(\mathfrak B)$ with $ \mathcal F = \pi \circ F$, and we use again~\eqref{eqQBayes} to update the prior state $\omega_{\rho_{t,t+\Delta t}} \in \ssb$ to $\omega_{\rho_{t,\Delta t}} \rvert_{\mathcal F(y)} \equiv \omega_{\rho_{t+\Delta t}}$; see the Analysis column of the schematic in \cref{figSchematic}. By~\eqref{eqBayesCompat}, the abelian and non-abelian analysis steps are mutually consistent, in the sense that if $\omega_{\rho_{t,t+\Delta t}} = \Gamma(\omega_{p_{t,t+\Delta t}})$, then for every observable $ f \in \mathfrak A$, we have $\omega_{\rho_{t+\Delta t}}(\pi f) = \omega_{p_{t+\Delta t}}f$.

We should note that the effect-based analysis step introduced above can naturally handle data spaces $Y$ of arbitrarily high dimension, overcoming an important limitation of the QMDA framework proposed in ref.~\cite{Giannakis19b}. It is also worthwhile pointing out connections between effect-valued maps and feature maps in reproducing kernel Hilbert space (RKHS) theory \cite{PaulsenRaghupathi16}: If $\psi$ is positive-definite, there is an associated RKHS $\mathcal H$ of complex-valued functions on $X$ with $w(x,x') := \psi( h(x), h(x'))$ as its reproducing kernel. The map $F $ then takes values in the space $ \mathcal E(\mathfrak A) \cap \mathcal H$, and is thus an instance of a feature map. In the non-abelian case, one can think of $\mathcal F$ as an operator-valued feature map. Elsewhere \cite{GiannakisEtAl22}, we have found that operator-valued feature maps are useful for simulating classical dynamical systems using quantum computers.

\subsection{\label{secPositivityPreserving}Positivity-preserving discretization}

The abelian and non-abelian formulations of data assimilation described thus far employ the infinite-dimensional algebras $\mathfrak A$ and $\mathfrak B$, respectively. To arrive at practical computational algorithms, these algebras must be projected to finite dimensions, carrying along the associated dynamical and observation operators to finite-rank operators. We refer to this process as \emph{discretization}. 

To motivate our approach, we recall the definitions of quantum operations and  channels \cite{Holevo01}:  A linear map $T : \mathfrak W_2 \to \mathfrak W_1$ between two von Neumann algebras $\mathfrak W_1$ and $\mathfrak W_2$ is said to be a \emph{quantum operation} if (i) $T$ is \emph{completely positive}, i.e., for every $n \in \mathbb N$ the tensor product map $T \otimes \Id_n : M_n(\mathfrak W_2) \to M_n(\mathfrak W_1)$ is positive, where $M_n(\mathfrak W_1)$ and $M_n(\mathfrak W_2)$ are the von Neumann algebras of $n\times n $ matrices over $\mathfrak W_1$ and $\mathfrak W_2$, respectively; (ii) $T$ is the the adjoint of a map  $T_* : \mathfrak W_{1*} \to \mathfrak W_{2*} $ such that  $\omega_{T_*\rho} \bm 1 \leq 1$ for every normal state $\omega_\rho \in S_*(\mathfrak W_1)$. If, in addition, $\omega_{T_*\rho} \bm 1 =1$, $T$ is said to be a \emph{quantum channel}. 

In quantum theory, operations and channels characterize the transfer of information in open and closed systems, respectively. Here, the requirement of complete positivity of $T : \mathfrak W_1 \to \mathfrak W_2$ (as opposed to mere positivity) ensures that $T$ is extensible to a state-preserving map between any two systems that include $\mathfrak W_1$ and $\mathfrak W_2$ as subsystems. If $\mathfrak W_1$ is abelian, then positivity and complete positivity of $T$ are equivalent notions. If $\mathfrak W_2 = B(H_2)$ for a Hilbert space $H_2$, Stinespring's theorem \cite{Stinespring55} states that $T$ is completely positive if and only if there is a Hilbert space $H_1$, a representation $\varpi : \mathfrak W_1 \to B(H_1) $, and a bounded linear map $V: H_2 \to H_1 $ such that $ T a = V^* \varpi(a) V$.   

It follows from these considerations that the Koopman operator $U^t : \mathfrak A \to \mathfrak A $ is a quantum operation (since $U^t$ is positive, the transfer operator preserves normal states, and $\mathfrak A$ is abelian), and so is $\mathcal U^t : \mathfrak B \to \mathfrak B$ (by Stinespring's theorem). In fact, $U^t$ and $\mathcal U^t$ are both quantum channels. It is therefore natural to require that the discretization procedure leads to a quantum operation in both of the abelian and non-abelian cases. A second key requirement is that the discretization procedure is positivity preserving; that is, positive elements of the infinite-dimensional algebra are mapped into positive-elements of the finite-dimensional algebra associated with the projected system. This requirement is particularly important when modeling physical systems, where failure to preserve signs of sign-definite quantities may result to loss of physical interpretability and lead to numerical instabilities \cite{YuvalOGorman20}. Our third requirement is that the finite-dimensional approximations converge in an appropriate sense to the original system as the dimension increases. One of the main perspectives put forward in this paper is that the construction of discretization schemes meeting these requirements is considerably facilitated by working in the non-abelian setting of $\mathfrak B$ rather than the abelian setting of $\mathfrak A$. 

First, as an illustration of the fact that a ``naive'' projection will fail to meet our requirements, consider the Koopman operator $U^t : H \to H$. Fix an orthonormal basis $ \{ \phi_0, \phi_1, \ldots \} $ of $H$ with $ \phi_j \in \mathfrak A$, and let $ \Pi_L : H \to H$ be the orthogonal projection that maps into the $L$-dimensional subspace $ H_L := \spn \{ \phi_0, \ldots, \phi_{L-1} \}$. A common approach to Koopman and transfer operator approximation \cite{BerryEtAl15,KlusEtAl16} is to orthogonally project elements of $H$ to elements of $H_L$, $ f \mapsto f_L := \Pi_L f$, and similarly approximate $U^t$ by the finite-rank operator $U^{(t)}_L := \Pi_L U^t \Pi_L $. The rank of $U^{(t)}_L$ is at most $L$, and it is represented in the $\{ \phi_l \} $ basis by an $L \times L $ matrix $\bm U $ with elements $ U_{ij} = \langle \phi_i, U^t \phi_j \rangle_H$. Note that we have the inclusions $H_L \subset \mathfrak A \subset H$, and $H_L$ and $\mathfrak A$ are invariant subspaces of $H$ under $U^{(t)}_L$. Moreover, an element $f \in H$ is mapped under $U^{(t)}_L $ to $ g = U^{(t)}_L f \in H_L $ such that $ g = \sum_{i,j=0}^{L-1} \phi_i U_{ij} \hat f_j$, where $ \hat f_j = \langle \phi_j, f \rangle_H$. Letting $ \bm f = ( \hat f_0, \ldots, \hat f_{L-1} )^\top$ and $ \bm g = (\hat g_0, \ldots, \hat g_{L-1} )^\top$ with $ \hat g_j = \langle \phi_j, g \rangle_H$ be the $L$-dimensional column vectors giving the representation of $\Pi_L f$ and $g$ in the $ \{ \phi_j \}$ basis of $H$, respectively, we can express the action of $ U^{(t)}_L$ on $f$ as the matrix--vector product $\bm g = \bm U \bm f$. 

Unfortunately, such methods are not positivity-preserving; that is, if $f$ is a positive function in $\mathfrak A$, $ \Pi_L f $ need not be positive. A classical example is a tophat function on the real line, which develops oscillations to negative values upon Fourier filtering (the Gibbs phenomenon). Even if $f$ is a positive function in the finite-dimensional subspace $H_L$ (so that $\Pi_L f = f$), the function $ g = U^t_L f $ need not be positive. Thus, standard discretization approaches based on orthogonal projections fail to meet the requirements laid out above.       

Next, we turn to positivity-preserving discretizations utilizing the abelian algebra $\mathfrak A$, as opposed to the Hilbert space $H$. Recalling that the projections in $\mathfrak A$ are the multiplication operators by characteristic functions of measurable sets, let $S$ be a measurable subset of $X$, and consider the multiplication operator $M_S : \mathfrak A \to \mathfrak A$ such that $M_S f = \chi_S f $. The map $M_S$ is positive and the projected Koopman operator, $M_S U^t M_S$ is a quantum operation. However, in order for $M_S$ to be a discretization map, we must have that its range is a finite-dimensional algebra. This is equivalent to asking that the restriction of $\mu$ to $S$ is supported on a finite number of atoms, i.e., measurable sets that have no measurable subsets of positive measure. This is a highly restrictive condition that fails to hold for broad classes of dynamical systems (e.g., volume-preserving flows on manifolds), so the abelian algebra $\mathfrak A$ does not provide an appropriate environment to perform discretizations meeting our requirements.

We now come to discretizations based on the operator algebra $\mathfrak B$. Working with $\mathfrak B$ allows us to use both Hilbert space techniques to construct finite-rank operators by orthogonal projection \emph{and} algebraic techniques to ensure that these projections are positivity-preserving. With $H_L$ as above, consider the finite-dimensional von Neumann algebra $\mathfrak B_L := B(H_L)$. This algebra has dimension $L^2$, and is isomorphic to the algebra $\mathbb M_L$ of $L\times L$ complex matrices. In particular, each element $A \in \mathfrak B_L$ is represented by a matrix $\bm A \in \mathbb M_L$ with elements $A_{ij} = \langle \phi_i, A \phi_j \rangle_H$. Correspondingly, we refer to data assimilation based on $ \mathfrak B_L$ as \emph{matrix mechanical}; see \Lm\ in \cref{figSchematic}.  

Next, note that $\mathfrak B_L$ can be canonically identified with the subalgebra of $\mathfrak B$ consisting of all operators $A$ satisfying $\ker A \supseteq H_L$ and $\ran A \subseteq H_L $. As a result, we can view the projection $\bm \Pi_L : \mathfrak B \to \mathfrak B$ with $\bm \Pi_L A = \Pi_L A \Pi_L$ as an operator from $\mathfrak B$ to $\mathfrak B_L$; see \Lq\ and \Lm\ in \cref{figSchematic}. By Stinespring's theorem, $\bm \Pi_L$ is completely positive. As a result, (i) the projection $ A \in \mathfrak B \mapsto \bm \Pi_L A \in \mathfrak B_L $ is positivity-preserving, and thus so is the projected representation $\pi_L : \mathfrak A \to \mathfrak B_L$ with $\pi_L = \bm \Pi_L \circ \pi$; and (ii) the projected Koopman operator $ \mathcal U^{(t)}_L : \mathfrak B_L \to \mathfrak B_L $ with $\mathcal U^{(t)}_L A = U^{(t)}_L A U^{(t)*}_L $ is a quantum operation. Moreover, since $ \{ \phi_l \} $ is an orthonormal basis of $H$, for any $f \in H$ we have $\lim_{L \to \infty} \Pi_L  f = f$. This implies that for every $A \in \mathfrak B$  the operators $ A_L = \bm\Pi_L A \in \mathfrak B_L $ converge strongly to $ A$, i.e., we have $\lim_{L\to\infty} A_L g = A g$, for all $ g \in H$. In particular, $\pi_L f$ with $f \in \mathfrak A$ converges strongly to $\pi f$.
Further details on these approximations can be found in \cref{appOpApprox,appState,appObs,appKoopman,appQuantumChannel}. Note that, in general, $ \pi_L f $ is \emph{not} a multiplication operator. That is, the act of embedding $\mathfrak A$ in the non-abelian algebra $\mathfrak B$ using $\pi: \mathfrak A \to \mathfrak B$ and then projecting to the finite-dimensional subalgebra $\mathfrak B_L$ using $ \bm \Pi_L : \mathfrak B \to \mathfrak B_L$ is not equivalent to projecting $\mathfrak A$ into $H_L$ using $\Pi_L$ and then embedding $H_L$ into $\mathfrak B$ using $\pi$.

Consider now  a normal state $ \omega_p \in \ssa$ induced by a probability density $ p \in \mathfrak A_*$, and let $ \omega_\rho = \Gamma(\omega_p)$ be the associated normal state on $\mathfrak B$ obtained via~\eqref{eqGamma}. For $L$ sufficiently large, $C_L(\rho) := \bm \Pi_L \rho$ is nonzero, and thus $\rho_L = \bm \Pi_L \rho / C_L(\rho) $ is a density operator in $\mathfrak B_L$ inducing a state $\omega_{\rho_L} \in \ssbl$, as well as an extension of that state to $\ssb$ (which we continue to denote by $\omega_{\rho_L}$). In \cref{figSchematic}, we denote the map $\omega_{\rho} \mapsto \omega_{\rho_L}$ as $\bm \Pi'_L$. By construction, the state $\omega_{\rho_L}$ satisfies $\omega_{\rho_L} A = \omega_\rho(\bm \Pi_L A) /C_L(\rho)$ for all $ A \in \mathfrak B$. Setting, in particular, $A = \pi f$ with $ f \in \mathfrak A $, it follows from~\eqref{eqPiGamma} and the strong convergence of $\pi_L f$ to $\pi_f$ that
\begin{equation}
    \lim_{L\to\infty} \omega_{\rho_L} (\pi_L f) = \omega_\rho (\pi f) = \omega_p f;
    \label{eqWeakStarConv}
\end{equation}
see \cref{appState}. It should be kept in mind that, aside from special cases, $\omega_{\rho_L}$ is not the image of a state $\omega_{p_L}$ in $\ssa$ under $\Gamma$ for a probability density $p_L \in L^1(X,\mu)$; that is, in general $\omega_{\rho_L}$ is an intrinsically ``quantum mechanical'' state. Note also that $\omega_{\rho_L}$ in \eqref{eqWeakStarConv} is a vector state (see \eqref{eqGamma}) induced by the unit vector $\xi_L = \Pi_L \sqrt{p} / \lVert \Pi_L \sqrt{p} \rVert_H$, which, as just mentioned, is generally not the square root of a probability density. 

Let now $\mathcal P^{(t)}_L : S(\mathfrak B_L) \to S(\mathfrak B_L) $ with $\mathcal P^{(t)}_L \omega = \omega \circ \mathcal U^{(t)}_L $ be the projected transfer operator on $ S(\mathfrak B_L)$. Unless $H_L$ is a $U^t$-invariant subspace, $\mathcal P^{(t)}_L \circ \bm \Pi'_L $ is not equal to $ \bm \Pi'_L \circ \mathcal P^t $; see the dashed arrow in the third column of the schematic in \cref{figSchematic}. Nevertheless, we have the asymptotic consistency $\lim_{L\to\infty} \left( (\mathcal P^t_L \circ \bm \Pi'_L) \omega_\rho \right) A_L=  (\mathcal P^t \omega_\rho) A$, which holds for all $\omega_\rho \in \ssb$ and $A \in \mathfrak B$; see \cref{appChannelConsistency}. Applying this result for $A = \pi f$ and $ \omega_\rho = \Gamma(\omega_p)$, with $f \in \mathfrak A$ and $\omega_p \in \ssa$, it follows that 
\begin{equation}
    \label{eqForecastConv}
\lim_{L \to \infty} \left( (\mathcal P^{(t)}_L \circ \bm \Pi'_L) \omega_\rho \right) ( \pi_L f)  = (\mathcal P^t \omega_\rho)(\pi f) = (P^t \omega_p) f.   
\end{equation}
Equation~\eqref{eqForecastConv} implies that the matrix mechanical data assimilation scheme consistently recovers the forecast step of data assimilation in the abelian algebra $\mathfrak A$ in the limit of infinite dimension $L$. 

In \cref{appSpectralApprox} we describe how, for any self-adjoint element $A \in \mathfrak B$, the spectral measures of $\bm \Pi_L A$ converge to the spectral measure of $A$ in a suitable sense. Since $\pi f \in \mathfrak B $ is self-adjoint if and only if $f \in \mathfrak A $ is self-adjoint (real-valued), the spectral convergence of $\pi_L f$ to $\pi f$ implies that the forecast distributions $\mathbb P_{\pi_L f, t,\tau}$ associated with $\omega_{\rho_{t,\tau,L}} = (\mathcal P^{(\tau)}_L \circ \bm \Pi'_L) \omega_{\rho_{t,L}} $ consistently recover the forecast distributions $\mathbb P_{\pi f, t,\tau} $ and $\mathbb P_{\pi f, t,\tau} $ from the infinite-dimensional quantum mechanical and abelian systems, respectively. 

With a similar approach (see \cref{appEffect}) one can deduce that the analysis step is also consistently recovered: Defining the effect-valued map $\mathcal F_L : Y \to \mathcal E(\mathfrak B_L)$ with $\mathcal F_L = \bm \Pi_L \circ \mathcal F$, it follows from~\eqref{eqBayesCompat} and~\eqref{eqWeakStarConv} that for every $f \in \mathfrak A$ and $ \omega_p \in \ssa$,    
\begin{equation}
    \label{eqAnalysisConv}
    \lim_{L\to\infty} \omega_{\rho_L}\rvert_{\mathcal F_L(y)} (\pi f) = \omega_{\rho}\rvert_{\mathcal F(y)}(\pi f) = \omega_p\rvert_{F(y)}f,
\end{equation}
where $ \omega_\rho = \Gamma(\omega_p)$ and $\omega_{\rho_L} = \bm \Pi'_L \omega_{\rho_L} $, so the matrix mechanical analysis step is asymptotically consistent with the infinite-dimensional quantum mechanical and abelian analyses.

On the basis of~\eqref{eqForecastConv} and~\eqref{eqAnalysisConv}, we conclude that as the dimension $L$ increases, the matrix mechanical data assimilation system is consistent with the abelian formulation of sequential data assimilation. Moreover, the discretization leading to this system is positivity-preserving, and the projected Koopman operator $\mathcal U^{(t)}_L$ is a quantum operation. Thus, matrix mechanical data assimilation provides a non-abelian, finite-dimensional framework that simultaneously meets all of the requirements listed in the beginning of this subsection.      

\subsection{Data-driven approximation}

The matrix mechanical data assimilation scheme described above admits a consistent data-driven approximation using kernel methods for machine learning \cite{Giannakis19,Giannakis19b,GiannakisEtAl22}. The data-driven scheme employs three, possibly related, types of training data, all acquired along a dynamical trajectory $X_N = \{x_0, x_1, \ldots, x_{N-1} \} \subset X $ with $x_n = \Phi^{n\, \Delta t}(x_0)$, where $\Delta t > 0 $ is a sampling interval: (i) Samples $y_n = h(x_n)$ of the observation map $ h : X \to Y$; (ii) samples $f_n = f(x_n)$ of the forecast observable $f \in \mathfrak A$; (iii) samples $z_n = z(x_n)$ from a map $z : X \to Z $, used as proxies of the dynamical states $x_n$. If the $x_n$ are known, we set $ Z = X $ and $z = \Id$. Otherwise, we set $Z=Y^{2Q+1}$ for a parameter $Q \in \mathbb N$, and define $z$ as the  delay-coordinate map $z(x) = ( h(\Phi^{-Q\, \Delta t}(x)), h(\Phi^{(-Q + 1) \, \Delta t}(x)), \ldots, \Phi^{Q\,\Delta t}(x) )$, giving
\begin{equation}
    \label{eqDelay}
    z_n = (y_{n-Q}, y_{n-Q+1}, \ldots, y_Q). 
\end{equation}
By the theory of delay-coordinate maps \cite{SauerEtAl91}, for sufficiently large number of delays $Q$, $z$ becomes an injective map for typical observation maps $h$ and sampling intervals $\Delta t$.  

The dynamical trajectory $x_n$ has an associated sampling measure 
\begin{displaymath}
    \mu_N := \frac{1}{N}\sum_{n=0}^{N-1} \delta_{x_n} 
\end{displaymath}
and a finite-dimensional Hilbert space  $\hat H_N := L^2(X,\mu_N)$. By ergodicity, as $N$ increases, the measures $\mu_N$ converge to the invariant measure $\mu$ in \mbox{weak-$^*$} sense, so we can interpret $\hat H_N$ as a data-driven analog of the infinite-dimensional Hilbert space $H$ (see \cref{appAssumptions}). Given the training data $z_0,z_1,\ldots, z_{N-1}$, and without requiring explicit knowledge of the underlying states $x_n$, we use kernel integral operators to build an orthonormal basis $\{ \phi_{0,N}, \ldots, \phi_{L-1,N} \}$ of an $L$-dimensional subspace $H_{L,N} \subseteq \hat H_N$ that plays the role of a data-driven counterpart of $H_L$. More specifically, the basis elements $\phi_{l,N}$ are eigenvectors of a kernel integral operator $K_N : \hat H_N \to \hat H_N$ induced by a kernel function $\kappa : Z \times Z \to \mathbb R$. The operator $K_N$ is represented by an $N\times N$ kernel matrix $\bm K_N$ constructed from the training data $z_n$; see \cref{appBasis}. We let $\mathfrak B_{L,N} = B(H_{L,N})$ be $L^2$-dimensional algebra of linear maps on $H_{L,N}$, which, as in the case of $\mathfrak B_L$, is isomorphic to the matrix algebra $\mathbb M_L$.  

Every operator employed in the matrix-mechanical scheme described in \cref{secPositivityPreserving} has a data-driven counterpart, represented as an $L \times L$ matrix with respect to the $\phi_{l,N}$ basis. Specifically, the projected Koopman operator $U^{(t)}_L$ at time $q\, \Delta t$, $ q\in\mathbb Z$, is represented by an operator $U^{(q)}_{L,N} \in \mathfrak B_{L,N}$ induced by the shift map on the trajectory $x_n$ \cite{BerryEtAl15}, with a corresponding quantum operation $\mathcal U^{(t)}_{L,N}: \mathfrak B_{L,N} \to \mathfrak B_{L,N}$. Moreover, the projected multiplication operator $\pi_L f$ is represented by an operator $\pi_{L,N} \hat f_N \in \mathfrak B_{L,N}$, and the effect-valued map $\mathcal F_L$ is represented by a map $\mathcal F_{L,N} : Y \to \mathcal E(\mathfrak B_{L,N})$. Here, $\hat f_N $ denotes the restriction of $f$ on $\hat X_N$. See \cref{appOpApprox,appState,appObs,appKoopman,appQuantumChannel,appChannelConsistency,appEffect} for further details. 

The data-driven scheme is positivity-preserving and constitutes a quantum operation analogously to the matrix mechanical scheme. Moreover, by results on spectral approximation of kernel integral operators \cite{VonLuxburgEtAl08} and ergodicity of the dynamics, the kernel matrices $\bm K_N$ exhibit spectral convergence in the large-data limit, $N\to \infty$, to a kernel integral operator $K : H \to H$ in a suitable sense (see~\cref{thmK}). Correspondingly, all matrix representations of operators, and thus all predictions made by the data-driven scheme converge to the predictions of the matrix mechanical scheme \Lm\ in \cref{figSchematic}. Overall, we obtain a data-driven, positivity-preserving, and asymptotically consistent data assimilation scheme. 

\section{Lorenz 96 multiscale system}

As our first numerical example, we apply QMDA to assimilate and predict the slow variables of the Lorenz~96 (L96) multiscale system \cite{Lorenz96}. This system was introduced by Lorenz in 1996 as a low-order model of atmospheric circulation at a constant-latitude circle. The dynamical degrees of freedom include $K$ slow variables $ \lx_1, \ldots, \lx_K$, representing the zonal (west to east) component of the large-scale atmospheric velocity field at $K$ zonally equispaced locations. Each slow variable $\lx_k$ is coupled to $J$ fast variables $\ly_{1,k}, \ldots, \ly_{J,k}$, representing small-scale processes such as atmospheric convection. The state space of the dynamical system is thus $X = \mathbb R^{J(K+1)}$ with $ x = (\lx_k, \ly_{j,k})_{j,k=1,1}^{J,K} \in X$. 

The governing equations are
\begin{equation}
    \label{eqL96}
    \begin{gathered}
        \begin{aligned}
            \dot\lx_k &= -\lx_{k-1}(\lx_{k-2} - \lx_{k+1}) - \lx_k + \lf + \frac{\lh_{\lx}}{J} \sum_{j=1}^J \ly_{j,k}, \\
            \dot\ly_{j,k} &= \frac{1}{\varepsilon}\left( -\ly_{j+1,k}(\ly_{j+2,k} - y_{j-1,k}) - \ly_{j,k} + \lh_{\ly} \lx_k \right),
        \end{aligned}\\
        \lx_{k+K} = \lx_k, \quad \ly_{j,k+K} = \ly_{j,k}, \quad \ly_{j+J,k} = \ly_{j,k+1},
    \end{gathered}
\end{equation}
where the parameter $\lf$ represents large-scale forcing (e.g., solar heating), $\lh_\lx$ and $\lh_\ly$ control the coupling between the slow and fast variables, and $\varepsilon$ is a parameter that controls the timescale separation between the fast and slow variables. The governing equations for $\lx_k$ feature large-scale forcing, $\lf$, a quadratic nonlinearity, $-\lx_{k-1}(\lx_{k-2} - \lx_{k+1})$, representing advection, a linear damping term, $-\lx_k$, representing surface drag, and a flux term, $\lh_{\lx} \sum_{j=1}^J \ly_{j,k} / J$, representing forcing from the fast variables. The terms in the $\ly_{j,k}$ equations have similar physical interpretations. In general, the dynamics becomes more turbulent/chaotic as $\lf$ increases. 

Here, we focus on the chaotic dynamical regime studied in refs.~\cite{FatkullinVandenEijnden04,BurovEtAl21} with $K = 9$, $J=8$, $\varepsilon = 1/128$, $\lf = 10$, $\lh_\lx = -0.8$, and $\lh_\ly = 1$. We consider that the observation map $h : X \to Y$ projects the state vector $x \in X$ to the slow variables, i.e., $ Y = \mathbb R^K$ and $ h(x) = y := (\lx_1, \ldots, \lx_K)$. Our forecast observable $ f \in \mathfrak A$ is set to the first slow variable, $ f(x) = \lx_1$. In addition, we consider that only slow variables in $Y$, rather than full dynamical dynamical states in $X$, are available to us for training. 

\subsection{Training} We employ a training dataset consisting of $\tilde N=\text{40,000}$ samples $y_0, \ldots, y_{\tilde N-1} \in Y $ and $ f_0, \ldots, f_{\tilde N-1} \in \mathbb R$ with $ y_n = h(x_n)$, $f_n = f(x_n)$, and $ x_n = \Phi^{n\,\Delta t}(x_0)$, taken at a sampling interval $\Delta t = 0.05$. To assess forecast skill, we use $\hat N = \text{7,000}$ samples $\hat y_0, \ldots, \hat y_{\hat N-1}$ with $ \hat y_n = h(\hat x_n)$ and $ \hat x_n = \Phi^{n\,\Delta t}(\hat x_0)$, taken on an independent dynamical trajectory from the training data. See \cref{appL96} for further details on L96 data. 

Using the samples $y_n$, we form delay-embedded data $z_0, z_1, \ldots, z_{N-1} \in Z = \mathbb R^{(2Q+1) K}$ with $N = \tilde N - 2 Q $ by applying \eqref{eqDelay} with the delay parameter $Q=12$. We then use the $z_n$ to build the data-driven basis of $H_{L,N}$ for dimension $L=1000$. Using the basis vectors, we compute $L\times L $ matrix representations of the projected Koopman operators $U^{(\tau_j)}_{L,N}$ for lead times $\tau_j = j \, \Delta t$ with $ j \in \{ 0, 1, \ldots, J_\text{f}-1 \}$, $J_\text{f}=101$ (see \cref{algKoopman}). Moreover, using the $ \phi_{l,N}$ and the training samples $f_n$, we compute the $L \times L$ matrix representation $\bm A_{L,N}$ of the operator $A_{L,N} := \pi_{L,N}f$ associated with the forecast observable. To evaluate forecast distributions for $f$, we compute the PVM $E_{A_{L,N}}$ of $A_{L,N}$, which amounts to solving the eigenvalue problem for $\bm A_{L,N}$ (see \cref{algObservable}). To report forecast probabilities, we evaluate $E_{A_{L,N}}$ on a collection of bins $S_1, \ldots, S_M \subset \mathbb R$ of equal probability mass in the equilibrium distribution of $f$. As our observation kernel $\psi : Y \times Y \to [0, 1]$, we use a variable-bandwidth bump function. The corresponding effect-valued map $\mathcal F_{L,N} : Y \to \mathcal E(\mathfrak B_{L,N})$ is represented by a matrix-valued function in the $\phi_{L,N}$. See \cref{appEffect} and \cref{algEffect} for further details on $\psi$ and $\mathcal F_{L,N}$.      

\subsection{Data assimilation} We perform data assimilation experiments initialized with the pure state $\omega_0 \equiv \omega_{\rho_0} \in S(\mathfrak B_{L,N})$ induced by the density operator $ \rho_0 = \langle \bm 1_X, \cdot \rangle_{\hat H_N} \bm 1_X \in \mathfrak B_{L,N}$. We interpret this state as an uninformative equilibrium state, in the sense that (i) $\omega_0 A_{L,N} = \tr(\rho_0 A_{L,N}) = \bar f_N$, where $\bar f_N = \sum_{n=0}^{N-1} f_n / N$ is the empirical mean of the forecast observable $f$ from the training data; and (ii) $ \omega_{\rho_0}$ is invariant under the action of the transfer operator, i.e., $\mathcal P^{(t)}_{L,N}\omega_0 := \omega_0 \circ \mathcal U^{(t)}_{L,N} = \omega_0$. 

\begin{figure}
    \includegraphics[width=\linewidth]{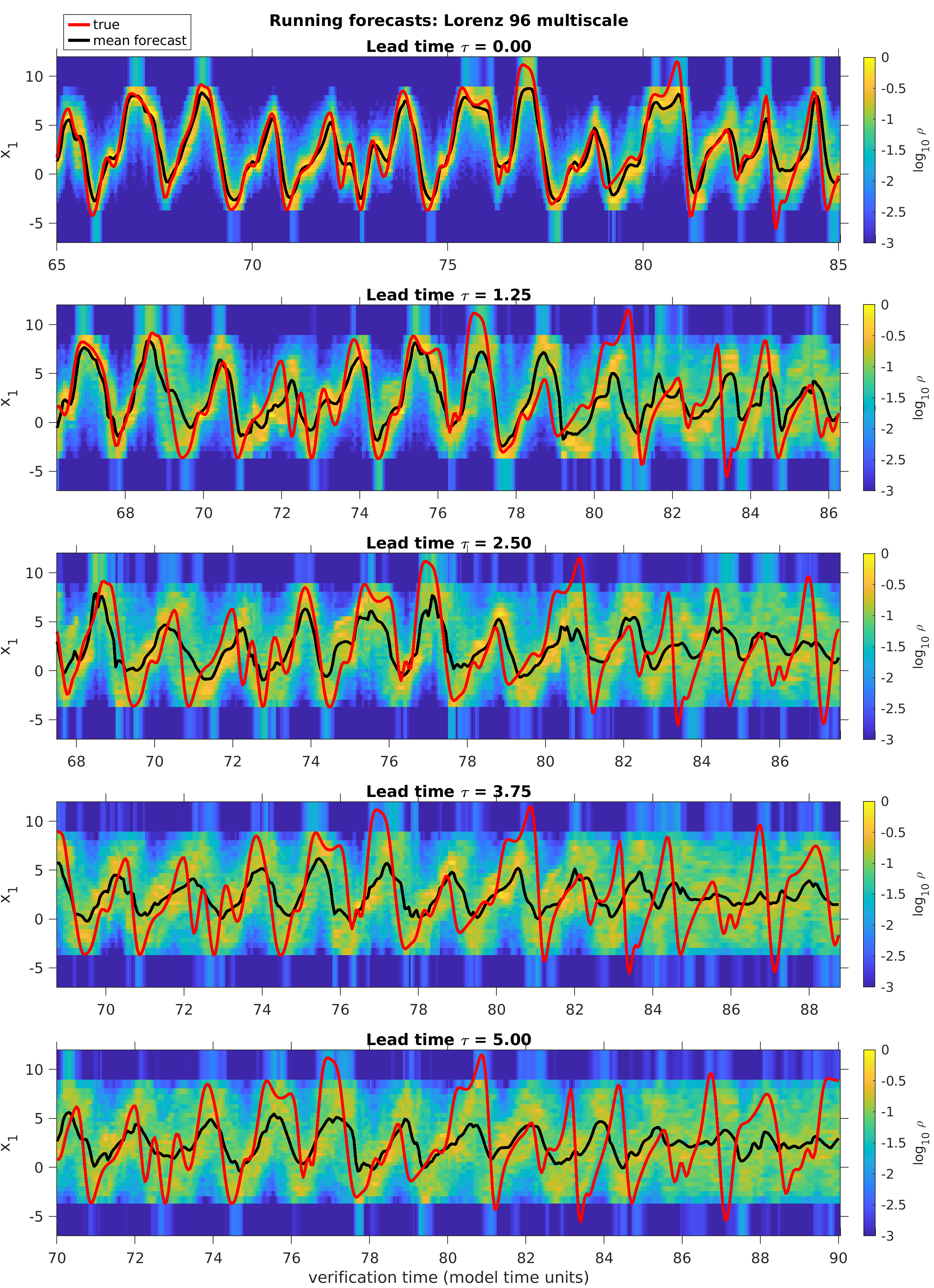}
    \caption{\label{figL96Prob}Running QMDA forecasts of the $\lx_1$ variable of the L96 multiscale system in a chaotic regime. The panels show the true $\lx_1$ evolution (black lines), the logarithm of the discrete forecast probability density $\varrho_{n,j}$ (colors), and the corresponding forecast mean (red lines) as a function of verification time for lead times in the range 0--5 model time units (top to bottom). The assimilated observable is the $K$-dimensional vector of the L96 slow variables, $ h = (x_1, \ldots, x_K)$, of the L96 system. }
\end{figure}
\begin{figure}
    \includegraphics[width=\linewidth]{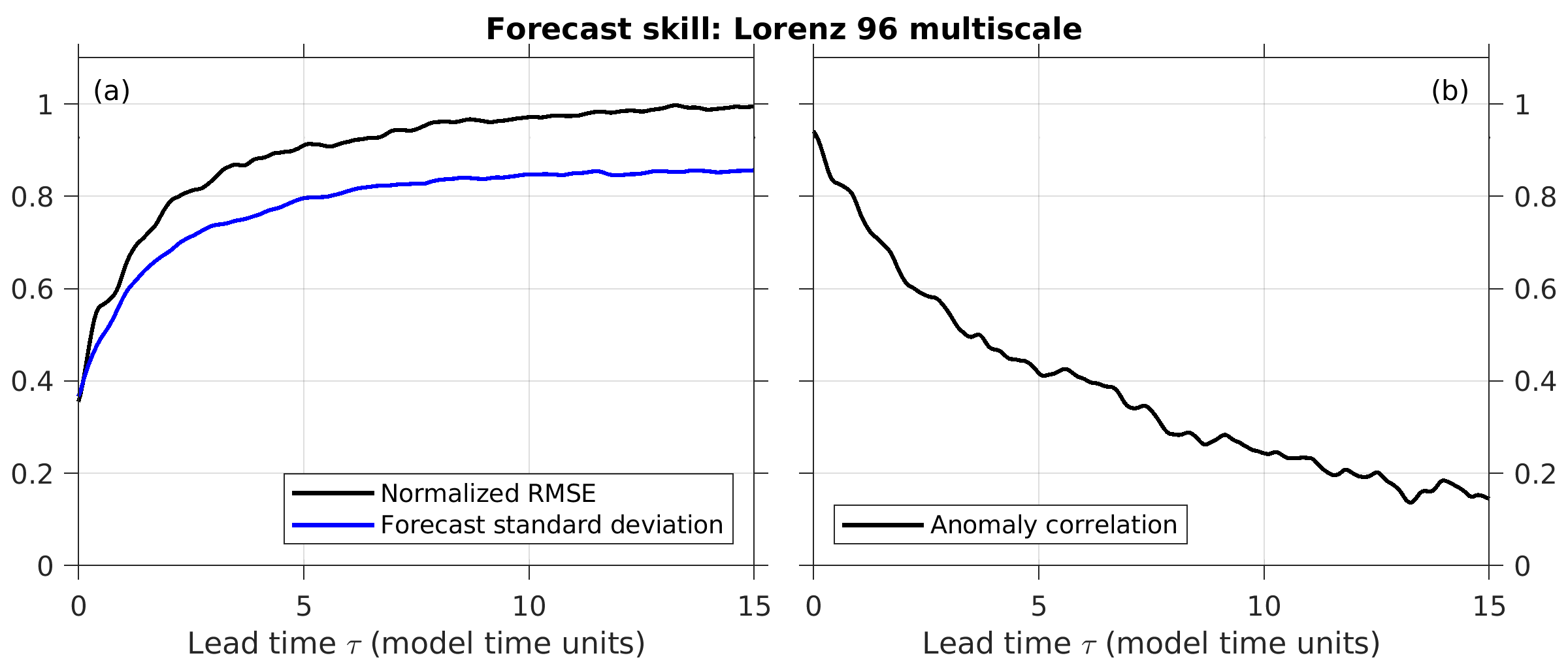}
    \caption{\label{figL96Err}NRMSE (left) and AC score (right) of the L96 forecasts from \cref{figL96Prob}.}
\end{figure}

Starting from $\omega_0$, QMDA produces a sequence of states $\omega_0, \omega_1, \ldots, \omega_{\hat N -1}$ by repeated application of the forecast--analysis steps, as depicted schematically in \cref{figSchematic} and in pseudocode form in \cref{algQMDA}. Specifically, for $ n \in \{ 1, \ldots, \hat N-1 \}$, we compute $\omega_n$ by first using the transfer operator to compute the state $\omega_{n-1,1} := \mathcal P^{(\Delta t)}_{L,N} \omega_{n-1}$ (which is analogous to the prior in classical data assimilation), and then applying the effect map to observation $\hat y_n$ to yield $ \omega_n = \omega_{n-1} \rvert_{{\mathcal F}_{L,N}(\hat y_n)}$ (which is analogous to the classical posterior). For each $n\in \{0, \ldots, \hat N-1 \}$, we also compute forecast states $ \omega_{n,j} = \mathcal P^{(\tau_j)}_{L,N} \omega_n$ and associated forecast distributions $\mathbb P_{n,j}$ for the observable $f$. We evaluate $\mathbb P_{n,j}$ on the bins $S_m$ and normalize the result by the corresponding bin size, $ s_m := \binlength (S_M) $ to produce discrete probability densities $\varrho_{n,j} = (\varrho_{n,j,0}, \ldots, \varrho_{n,j,M-1})$ with $\varrho_{n,j,m} := \mathbb P_{n,n}(S_m) / s_m $. We also compute the forecast mean and standard deviation, $ \bar f_{n,j} = \omega_{\rho_{n,j}} A_{L,N} $ and $\sigma_{n,j} = (\omega_{\rho_{n,j}} (A_{L,N})^2 - \bar f_{n,j}^2)^{1/2} $, respectively. We assess forecast skill through the normalized mean square error (NRMSE) and anomaly correlation (AC) scores, computed for each lead time $\tau_j$ by averaging over the $\hat N$ samples in the verification dataset; see \cref{appMetrics}.       

Figure \ref{figL96Prob} shows the forecast probability densities $\varrho_{n,j}$ (colors), forecast means $\bar f_{n,j}$ (black lines), and true signal $\hat f_{n+j}$ (red lines), plotted as a function of verification time $t_{n+j}$ over intervals spanning 20 time units for representative lead times $\tau_j $ in the range 0 to 5 time units. The corresponding NRMSE and AC scores are displayed in \cref{figL96Err}. Given the turbulent nature of the dynamics, we intuitively expect the forecast densities $\varrho_{n,j}$ to start from being highly concentrated around the true signal for small $\tau_j$, and progressively broaden as $\tau_j$ increases (i.e., going down the panels of \cref{figL96Prob}), indicating that the forecast uncertainty increases. Correspondingly, we expect the mean $\bar f_{n,j}$ to accurately track the true signal for $\tau_j$, and progressively relax towards the equilibrium mean $\int_X f \, d\mu$.

The results in Figs.~\ref{figL96Prob} and~\ref{figL96Err} are broadly consistent with this behavior: The forecast starts at $\tau_j = 0$  from a highly concentrated density around the true signal (note that \cref{figL96Prob} shows logarithms of $\varrho_{n,j}$), which is manifested by low NRSME and large AC values in \cref{figL96Err} of approximately 0.35 and 0.95, respectively. As $\tau_j$ increases, the forecast distribution broadens, and the NRMSE (AC) scores exhibit a near-monotonic increase (decrease). In \cref{figL96Err}(a), the estimated error based on the forecast variance $\sigma_{n,j}$ is seen to track well the NRMSE score, which indicates that the forecast distribution $\varrho_{n,j}$  provides an adequate representation of forecast uncertainty. It should be noted that errors are present even at time $\tau_j = 0$, particularly for periods of time where the true signal takes extreme positive or negative values. Such reconstruction errors are expected for a fully data-driven driven method applied to a system with a high-dimensional attractor. Overall, the skill scores in \cref{figL96Err} are comparable with the results obtained in ref.~\cite{BurovEtAl21} using kernel analog forecasting (KAF). 

\section{El Ni\~no Southern Oscillation}

The El Ni\~no Southern Oscillation (ENSO) \cite{Bjerknes69} is the dominant mode of interannual (3--5 year) variability of the Earth's climate system. Its primary manifestation is an oscillation between positive sea surface temperature (SST) anomalies over the eastern tropical Pacific Ocean, known as El Ni\~no events, and episodes of negative anomalies known as La Ni\~na events \cite{WangEtAl17}. Through atmospheric teleconnections, ENSO drives seasonal weather patterns throughout the globe, affecting the occurrence of extremes such as floods and droughts, among other natural and societal impacts \cite{McPhadenEtAl06}. Here, we demonstrate that QMDA successfully predicts ENSO within a comprehensive climate model by assimilating high-dimensional SST data.

\begin{figure}
    \includegraphics[width=\linewidth]{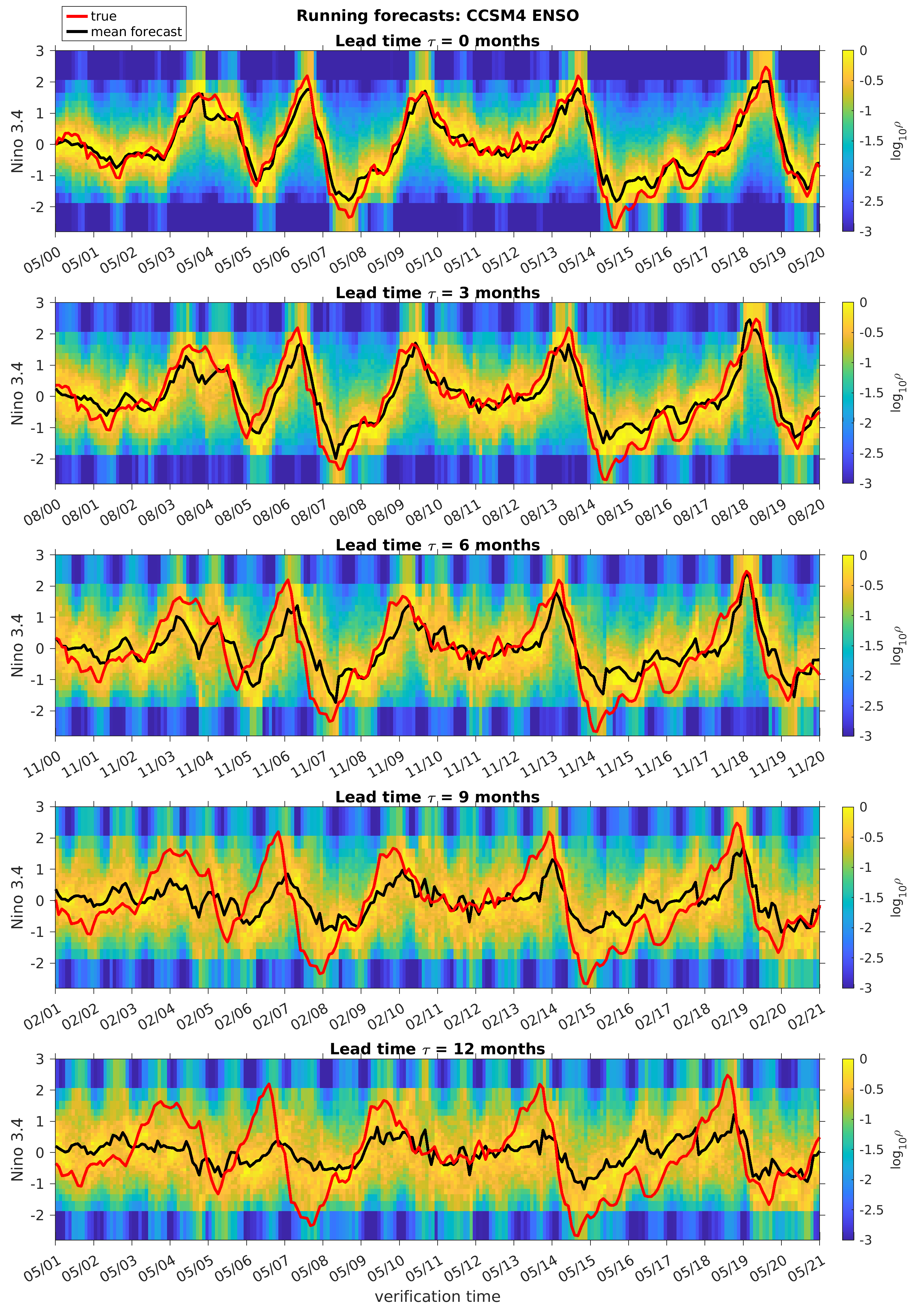}
    \caption{\label{figNinoProb}As in \cref{figL96Prob}, but for forecasts of the Ni\~no~3.4 index in CCSM4. The assimilated observable is the vector of Indo-Pacific SST gridpoint values. The forecast lead times are in 3-month increments in the range 0--12 months. The verification times ($x$-axes) are shown in mm/yy date format relative to an arbitrary year in the verification interval.}
\end{figure}

\begin{figure}
    \includegraphics[width=\linewidth]{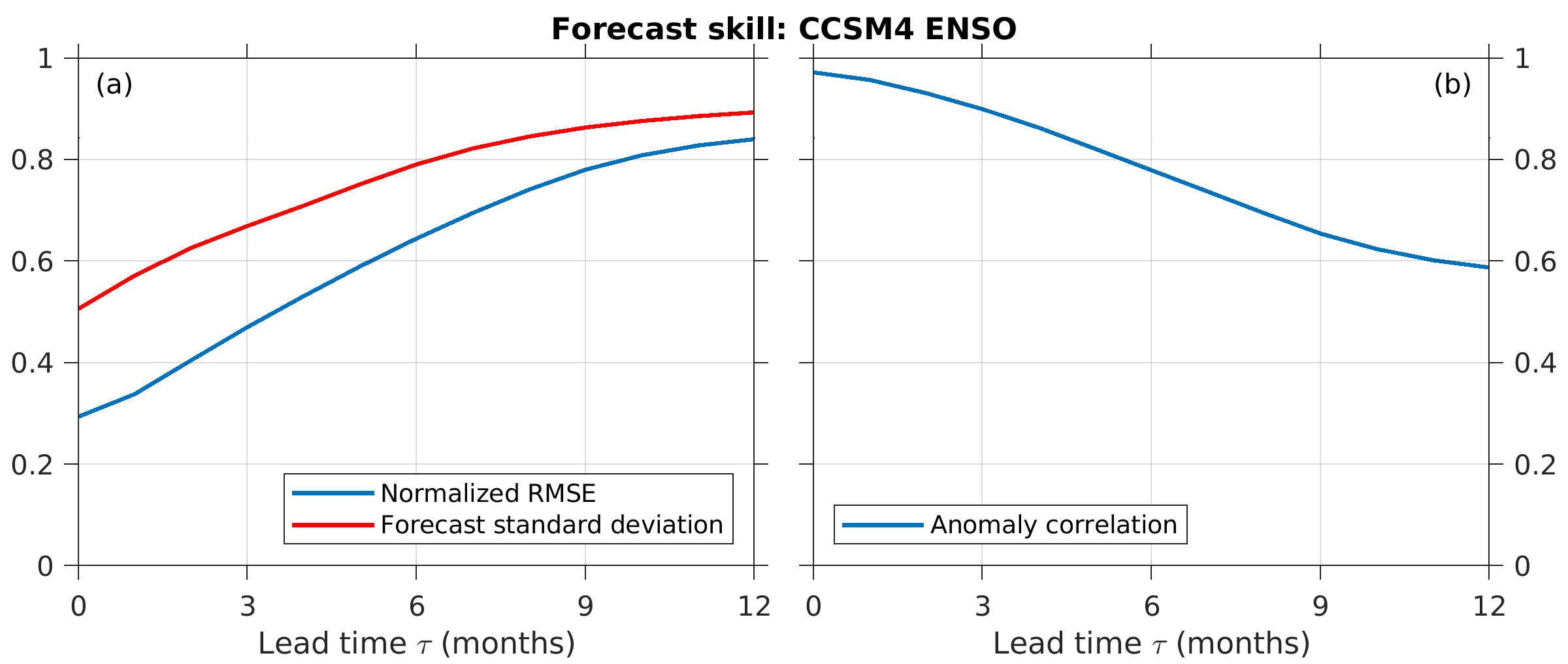}
    \caption{\label{figNinoErr}NRMSE (a) and AC score (b) of the Ni\~no~3.4 forecasts from \cref{figNinoProb}.}
\end{figure}

Our experimental setup follows closely ref.~\cite{WangEtAl20}, who performed data-driven ENSO forecasts using KAF. As training and test data, we use a control integration of the Community Climate System Model Version 4 (CCSM4) \cite{GentEtAl11}, conducted with fixed pre-industrial greenhouse gas forcings. The simulation spans 1300 years, sampled at an interval $\Delta t = 1$ month.  Abstractly, the dynamical state space $X$ consists of all degrees of freedom of CCSM4, which is of order $10^7$ and includes variables such as density, velocity, and temperature for the atmosphere, ocean, and sea ice, sampled on discretization meshes over the globe. Since this simulation has no climate change, there is an implicit invariant measure $\mu$ sampled by the data, and we can formally  define the algebras $\mathfrak A$ and $\mathfrak B$ associated with the invariant measure as described above. 

In our experiments, the observation map $h : X \to Y$ returns monthly-averaged SST fields on an Indo-Pacific domain; that is, we have $Y = \mathbb R^d$ where $d$ is the number of surface ocean gridpoints within the domain. We have $d = \text{44,414}$, so these experiments test the ability of QMDA to assimilate high-dimensional data. However, note that $h$ is a highly non-invertible map since Indo-Pacific SST comprises only a small subset of CCSM4's dynamical degrees of freedom. As our forecast observable $ f \in \mathfrak A$ we choose the Ni\~no~3.4 index---a commonly used index for ENSO monitoring defined as the average SST anomaly over a domain in the tropical Pacific Ocean. Large positive (negative) values of Ni\~no~3.4 represent El Ni\~no (La Ni\~na) conditions, whereas values near zero represent neutral conditions. Additional information on the CCSM4 data is included in \cref{appCCSM4}. 

Following ref.~\cite{WangEtAl20}, we use the SST and Ni\~no~3.4 samples from the first 1,100 years of the simulation as training data, and the corresponding samples for the last 200 years as test data. Thus, with the notation of the previously described L96 experiments, our training data are $ y_n = h(x_n)$ (Indo-Pacific SST) and $f_n = f(x_n)$ (Ni\~no~3.4) for $ n \in \{ 0, \ldots, N-1 \} $ and $N = \text{1,100} \times 12 = 13,200 $, and our test data are $ \hat y_n = h(x_{n+N})$ and $ \hat f_n = f(x_{n+N})$ for $ n \in \{ 0, \ldots, \hat N-1\} $ and $\hat N = 200 \times 12 = 2400 $. Here, $x_n = \Phi^{n\,\Delta t}(x_0) \in X$ is the (unknown) dynamical trajectory of the CCSM4 model underlying our training and test data. Using the SST samples $y_n$, we build the training data $z_n $ using delay-coordinate maps with parameter $Q = 5$; i.e., the data $z_n$ used for building the data-driven basis of $H_{L,N}$ consist of SST ``videos'' that span a total of $2Q+1 = 11 $ months. We use $L=3000$ dimensions. Following computation of the basis vectors $\phi_{l,N}$, the procedure for initializing and running QMDA is identical to our L96 experiments, and we will use the same notation to present results for ENSO. 

Figure~\ref{figNinoProb} shows the forecast probability density ($\varrho_{n,j} $; colors), forecast mean ($\bar f_{n,j}$; black lines), and true signal ($\hat f_{n+j}$; red lines) as a function of verification time $t_{n+j}$ over 20-year portions of the test dataset for lead times $\tau_j$ in the range 0 to 12 months. The corresponding NRMSE and AC scores are displayed in \cref{figNinoErr}. Qualitatively, the forecast density $\varrho_{n,j}$ displays a similar behavior as in the L96 experiments; that is, it is concentrated around the true signal on short lead times ($\tau_j \lesssim 3$ months), and gradually broadens as forecast uncertainty grows with increasing lead time $\tau_j$ due to chaotic climate dynamics. In \cref{figNinoErr}(a), the estimated forecast error based on the forecast variance $\sigma_{n,j}$ agrees reasonably well with the actual NRMSE evolution. Adopting $\text{AC}=0.6$ as a commonly used threshold for ENSO predictability, we see from the AC results in \cref{figNinoErr}(b) that QMDA produces useful forecasts out to $\tau_j \simeq 12$ months. The performance of QMDA in terms of the NRMSE and AC metrics is comparable to that found for KAF in ref.~\cite{WangEtAl20}, but QMDA has the advantage of producing full forecast probability distributions instead of point estimates. Compared to KAF, QMDA also has the advantage of being positivity-preserving. While failure to preserve signs may not be critical for sign-indefinite ENSO indices, there are many climatic variables such as temperature or moisture where sign preservation is particularly important.   

\section{Concluding remarks}

We have developed new theory and methods for sequential data assimilation of partially observed dynamical systems using techniques from operator algebra, quantum information, and ergodic theory. At the core of this framework, called quantum mechanical data assimilation (QMDA), is the non-abelian algebraic structure of spaces of operators. One of the main advantages that this structure provides is that it naturally enables finite-dimensional discretization schemes that preserve the sign of sign-definite observables in ways that are not possible with classical projection-based approaches. 

We build these schemes starting from a generalization of Bayesian data assimilation based on a dynamically consistent embedding into an infinite-dimensional operator algebra, to our knowledge described here for the first time. Under this embedding, forecasting is represented by a quantum operation induced by the Koopman  operator of the dynamical system, and Bayesian analysis is represented by quantum effects. In addition to providing a useful starting point for discretizing data assimilation, this construction draws connections between statistical inference methods for classical dynamical systems with quantum information and quantum probability, which should be of independent interest. 

QMDA leverages properties of operator algebras to project the infinite-dimensional framework into the level of a matrix algebra in a manner that positive operators are represented by positive matrices, and the finite-dimensional system is a quantum operation. QMDA also has a data-driven formulation based on kernel methods for machine learning with consistent asymptotic behavior as the amount of training data increases. We have demonstrated the efficacy of QMDA with forecasting experiments of the slow variables of the Lorenz 96 multiscale system in a chaotic regime and the El Ni\~no Southern Oscillation in a climate model. QMDA was shown to perform well in terms of point forecasts from quantum mechanical expectations, while also providing uncertainty quantification by representing entire forecast distributions via quantum states. 

This work motivates further application and development of algebraic approaches and quantum information to building models and performing inference of complex dynamical systems. In particular, as we enter the quantum computing era, there is a clear need to lay out the methodological and algorithmic foundations for quantum simulation of complex classical systems. Being firmly rooted in quantum information and operator theory, the QMDA framework presented in this paper is a natural candidate for implementation in quantum computers. As noted in the opening section of the paper, efforts to simulate classical dynamical systems on quantum computers are being actively pursued \cite{Joseph20,LiuEtAl21,GiannakisEtAl22}. Porting data assimilation algorithms such as QMDA to a quantum computational environment presents new challenges as the iterative nature of the forecast--analysis cycle will require repeated interaction between the quantum computer and the assimilated classical system. We believe that addressing these challenges is a fruitful area for future research with both theoretical and applied dimensions.        

\appendix

\section{\label{appAssumptions}Assumptions}

We make the following standing assumption on the dynamical system and forecast observable.

\begin{assump}\ 
    \label{assump1}
    \begin{enumerate}[label=(\alph*)]
        \item $\Phi^t :X \to X$, $ t\in \mathbb R$, is a continuous-time, continuous flow, on a compact metrizable space $X$. 
        \item $\mu$ is an invariant, ergodic, Borel probability measure under $\Phi^t$. 
        \item The forecast observable $ f : X \to \mathbb R$ is a real-valued function lying in $\mathfrak A = L^\infty(X,\mu)$.
    \end{enumerate}
\end{assump}

Note that the support of $\mu$ is a closed subset of the compact space $X$, and thus is compact. Moreover, the compactness assumption on $X$ can be replaced by the weaker assumption that $\Phi^t$ has a forward-invariant compact set $X_+$ that contains the support of $\mu$ (which is again necessarily compact). The analysis performed below can be carried over to this setting by replacing the space of continuous functions $C(X)$ (which is a Banach space equipped with the uniform norm when $X$ is compact) with $C(X_+)$.  

For the purpose of data-driven approximation, we additionally require:
\begin{assump}\ 
    \label{assump2}
    \begin{enumerate}[label=(\alph*)]
        \item For the sampling interval $\Delta t > 0 $, the discrete-time system induced by the map $\Phi^{\Delta t} : X \to X$ is ergodic with respect to $\mu$.
        \item The forecast observable $ f : X \to \mathbb R$ is continuous.  
        \item The observation map $h: X \to Y$ is continuous.
    \end{enumerate}
\end{assump}

\cref{assump2}(a) implies that for $\mu$-a.e.\ initial condition $x_0 \in X$, the sampling measures $\mu_N = \sum_{n=0}^{N-1} \delta_{x_n}/N$ with $ x_n = \Phi^{n\,\Delta t}(x_0)$ weak-$^*$ converge to the invariant measure $\mu$; that is, 
\begin{equation}
    \lim_{N\to\infty} \int_X f \, d\mu_N = \lim_{N\to\infty}\frac{1}{N} \sum_{n=0}^{N-1} f(x_n) = \int_X f \, d\mu, \quad \forall f \in C(X).
    \label{eqWeakConv}
\end{equation}
Henceforth, we will assume for convenience that the states $x_0, x_1, \ldots$ are all distinct---aside from the trivial case that the support of $\mu$ is a singleton set consisting of a fixed point, this assumption holds for $\mu$-a.e.\ initial condition $x_0$, and ensures that the Hilbert space $ \hat H_N = L^2(X,\mu_N)$ has dimension $N$. 

In what follows, 
\begin{displaymath}
    \langle f, g \rangle = \int_X f^* g \, d\mu, \quad \langle f, g \rangle_N = \int_X f^* g \, d\mu_N = \frac{1}{N} \sum_{n=0}^{N-1}f^*(x_n)g(x_n) 
\end{displaymath}
will denote the inner products of $H$ and $\hat H_N$, respectively. The Hilbert space $\hat H_N$ is isomorphic to $\mathbb C^N$ equipped with the normalized dot product $ \bm f \cdot \bm g \equiv \bm f^\dag \bm g /N $, where $\bm f^\dag$ is the Hermitian conjugate (complex conjugate transpose) of the column vector $\bm f \in \mathbb C^N$. Under this isomorphism, two elements $f, g \in \hat H_N$ are represented by column vectors $\bm f = (f(x_0), \ldots, f(x_{N-1}))^\top$ and $\bm g = (g(x_0), \ldots, g(x_{N-1}))^\top$, and we have $\langle f, g \rangle_N = \bm f \cdot \bm g$.

\section{\label{appMethods}Finite-dimensional approximation}

This section provides an overview and pseudocode listings of the data-driven approximation techniques underpinning QMDA. We begin with \cref{algQMDA}, which gives a high-level description of the QMDA pipeline employed in the L96 multiscale and CCSM4 experiments presented in the main text. The algorithm is divided up into two parts: 
\begin{enumerate}
    \item A training phase, which uses the training data $y_0, \ldots, y_{N-1} \in Y $ and $f_0, \ldots, f_{N-1} \in \mathbb R$ for the observation map $h$ and forecast observable $f$, respectively, to build an orthonormal basis $\{ \phi_{0,N}, \ldots, \phi_{L-1,N} \}$ of the Hilbert space $H_{L,N}$. The basis is used to approximate the Koopman operator $U^t$ of the dynamical system, the multiplication operator $\pi f $ representing the forecast observable, and the effect-valued map $\mathcal F$ employed in the analysis step. 
    \item A prediction phase, which iteratively executes the sequential forecast--analysis steps of QMDA given a test dataset of observations $\hat y_0, \ldots, \hat y_{\hat N-1} \in Y$. The state of the data assimilation system at time $t_n$ is a vector state of the operator algebra $\mathfrak B_{L,N}$, induced by a unit vector $\xi_n \in H_{L,N}$. This vector is represented in the $\{\phi_{l,N} \}$ basis of $H_{L,N}$ by a column vector $\bm \xi_n \in \mathbb C^L$.   
\end{enumerate}
\cref{algQMDA} depends on a number of lower-level procedures, which we describe in the following subsections.

\begin{algorithm}
    \caption{\label{algQMDA}QMDA pipeline employed in the L96 and CCSM4 experiments described in the main text.}
    \small
    \Inputs
    \begin{enumerate}
        \item Delay embedding parameter $Q\in \mathbb N$.
        \item Hilbert space dimension $L \in \mathbb N$.
        \item Number of spectral bins $M \in \mathbb N$.
        \item Kernel neighborhood parameter $k_\text{nn}$ in $\mathbb N$.
        \item Bandwidth exponent parameter $a>0$ and range parameters $J_1, J_2 \in \mathbb N$.  
        \item Number of forecast timesteps $J_\text{f} \in \mathbb N$.
        \item Training data from observation map, $y_{-Q}, \ldots, y_{N-1+Q} \in Y \equiv \mathbb R^d$ with $y_n = h(x_n)$.
        \item Training data from forecast observable, $ f_0, \ldots, f_{N-1+Q} \in \mathbb R $ with $ f_n = f(x_n)$. 
        \item Observed data $ \hat y_0, \ldots, \hat y_{\hat N-1} \in Y$ in test period.
    \end{enumerate}
    \Require All training data are induced by the same sequence of (unknown) time-ordered states $x_{-Q}, \ldots, x_{N-1+Q} \in X $ with $ x_n = \Phi^{n\,\Delta t}(x_0)$, taken at a fixed sampling interval $\Delta t > 0$.

    \Outputs
    \begin{enumerate}
        \item Mean forecast $\{ \bar f_{nj} \}$ for $ n \in \{ 0, \ldots, \hat N-1 \}$ and $ j \in \{ 0, \ldots, J_\text{f} \}$, where $\bar f_{nj}$ has initialization time $t_n = n \, \Delta t$ in the test period and lead time $\tau_j = j \,\Delta t$. 
        \item Forecast uncertainty $\{ \sigma_{nj} \}$ for $ n \in \{ 0, \ldots, \hat N-1 \}$ and $ j \in \{ 0, \ldots, J_\text{f} \}$, where $\sigma_{nj}$ has initialization time $ t_n$ and lead time $\tau_j$. 
        \item Spectral bins (intervals) $ S_0, \ldots, S_{M-1} \subseteq \mathbb R $.
        \item Forecast probability vectors $\{ \bm p_{nj} \}$ for $ n \in \{ 0, \ldots, \hat N-1 \}$ and $j \in \{ 0, \ldots, J_\text{f} \}$, where $\bm p_{nj} = (p_{0nj}, \ldots, p_{M-1,nj} )$ is the probability that, for initialization time $t_n$ and lead time $\tau_j$, the forecast observable $f$ lies in $S_m$. 
    \end{enumerate}

    \Training
    \begin{enumerate}
        \item Apply~\eqref{eqDelay} to the training data $y_n$ to build the delay-embedded dataset $z_0, \ldots, z_{N-1} \in Z \equiv \mathbb R^{(2Q+1)d}$. 
        \item Set $d_Z$ to the Euclidean distance on $Z$. Execute \cref{algBandwidth} with inputs $\{ z_n\}_{n=0}^{N-1}$, $d_Z$, and $k_\text{nn}$ to obtain a kernel bandwidth function $b_Z : Z \to \mathbb R_+$.   
        \item Execute \cref{algBasis} with inputs $\{z_n\}_{n=0}^{N-1}$, $d_Z$, and $k_\text{nn}$ to obtain basis vectors $\bm \phi_0, \ldots, \bm \phi_{L-1} \in \mathbb R^N$ for $H_{L,N}$.
        \item Execute \cref{algSpecBins} with inputs $ \{ f_n\}_{n=0}^{N-1}$ to obtain spectral bins $S_0, \ldots, S_{M-1} \subset \mathbb R$.
        \item Execute \cref{algObservable} with inputs $\{f_n\}_{n=0}^{N-1}$, $\{\bm \phi_l\}_{l=0}^{L-1}$, and $\{S_m \}_{m=0}^{M-1}$ to obtain the projected multiplication operator $\bm A \in \mathbb M_L$ representing $f$ and spectral projectors $\bm E_0, \ldots, \bm E_{M-1} \in \mathbb M_L$.
        \item For each $j \in \{1, \ldots, J_\text{f}\}$, execute \cref{algKoopman} with inputs $j$ and $\{\bm \phi_l\}_{l=0}^{L-1}$ to obtain Koopman operator matrices $\bm U^{(1)}, \ldots, \bm U^{(J_\text{f})} \in \mathbb M_L $.
        \item Set $d_Y$ to the Euclidean distance on $Y$. Execute \cref{algBandwidth} with inputs $\{ y_n\}_{n=0}^{N-1}$, $d_Y$, and $k_\text{nn}$ to obtain a kernel bandwidth function $b_Y : Y \to \mathbb R_+$.   
        \item Define the scaled distance function $\tilde d_Y : Y \times Y \to \mathbb R_+ $ with $\tilde d_Y(y,y') = d_Y(y,y')/ \sqrt{ b_Y(y) b_Y(y') } $. Execute \cref{algBandwidth} with inputs $\{y_n\}_{n=0}^{N-1}$, $a$, $J_1, J_2$, $\tilde d_Y$, and $\eta_\text{bump}$ (where $\eta_\text{bump}$ is the bump function from \eqref{eqEtaBump}) to obtain an optimal bandwidth parameter $\epsilon_*$. 
        \item Define the kernel function $\psi : Y \times Y \to [0, 1] $ with $\psi(y,y') = \eta_\text{bump}( \tilde d_Y(y,y') / \epsilon_* )$. Execute \cref{algEffect} with inputs $\psi$, $\{y_n\}_{n=0}^{N-1}$ and $\{\bm \phi_l\}_{l=0}^{L-1}$ to obtain the effect-valued feature map $\bm F : Y \to \mathbb M_L$.
    \end{enumerate}
    \Prediction
    \begin{enumerate}
        \item Set the initial state vector $\bm \xi_0 = (1, 0, \ldots, 0)^\top \in \mathbb C^L $.
        \item For each $n \in \{ 1, \ldots, \hat N-1\}$ execute the forecast--analysis cycle in \cref{algQMDAStepPure} with inputs $J_\text{f}$, $J_\text{o} = 1$, $\{ \bm U^{(j)} \}_{j=1}^{J_\text{f}}$, $\bm A$, $\{ \bm E_m \}_{m=0}^{M-1}$, $\bm F$, $\bm \xi_{n-1}$, and $\hat y_n$. 

            \medskip
            \Return
            \begin{itemize}
                \item The mean forecasts $\bar f_{n-1,0}, \ldots, \bar f_{n-1,J_\text{f}}$.
                \item The forecast uncertainties $\sigma_{n-1,0}, \ldots, \sigma_{n-1,J_\text{f}}$.
                \item The forecast probability vectors $\bm p_{n-1,0}, \ldots, \bm p_{n-1,J_\text{f}} $.
                \item The posterior state vector $\bm \xi_n$ given the observation $y_n$.
            \end{itemize}
    \end{enumerate}
\end{algorithm}

\subsection{\label{appBasis}Kernel eigenfunctions}

Following refs.~\cite{BerryEtAl15,GiannakisEtAl15,Giannakis19,Giannakis19b,DasEtAl21}, we use eigenvectors of kernel integral operators to construct both the $L$-dimensional Hilbert spaces $H_L$ and their data-driven counterparts $H_{L,N}$. We make the following assumptions on the kernels used to define these operators.

\begin{assump}\ 
    \label{assumpK}
    \begin{enumerate}[label=(\alph*)]
        \item $k : X \times X \to \mathbb R$ is a continuous, symmetric kernel.
        \item $k_0, k_1, \ldots : X \times X \to \mathbb R$ is a family of continuous, symmetric kernels such that, as $N \to \infty $, $k_N$ converges uniformly to $k$.   
    \end{enumerate}
\end{assump}

As we describe below, the kernels $k_N$ are typically data-dependent kernels obtained by normalization of a fixed (data-independent) kernel on $X$. 

Defining $ K : H \to H $ as the integral operator
\begin{displaymath}
    K f= \int_X k(\cdot,x) f(x) \, d\mu(x),
\end{displaymath}
we have that $K$ is a real, self-adjoint, Hilbert-Schmidt operator, and thus there exists a real, orthonormal basis $ \{ \phi_0, \phi_1, \ldots \}$ of $H = L^2(X,\mu)$ consisting of eigenvectors of $K$, 
\begin{equation}
    \label{eqKEig}
    K \phi_l = \lambda_l \phi_l, \quad \lambda_l \in \mathbb R,
\end{equation}
where the eigenvalues $\lambda_0, \lambda_1, \ldots$ are ordered in order of decreasing absolute value and converge to 0 as $l \to \infty$. In the data-driven setting, we replace $H$ by the $N$-dimensional Hilbert space $\hat H_N$, and define the integral operator $K_N : \hat H_N \to \hat H_N$ as
\begin{displaymath}
    K_N f = \int_X k_N(\cdot, x) f(x) \, d\mu_N(x) = \frac{1}{N} \sum_{n=0}^{N-1} k_N(\cdot,x_n) f(x_n).
\end{displaymath}
The operator $K_N$ has an associated real, orthonormal eigenbasis $ \{\phi_{0,N}, \ldots, \phi_{N-1,N} \}$ of $\hat H_N$, where
\begin{equation}
    \label{eqKNEig}
    K_N \phi_{l,N} = \lambda_{l,N}\phi_{l,N}, \quad \lambda_{l,N} \in \mathbb R,
\end{equation}
and the eigenvalues $\lambda_{l,N}$ are ordered again in order of decreasing modulus. 

An important property of the eigenvectors $\phi_l$ and $\phi_{l,N}$ corresponding to nonzero eigenvalues is that they have continuous representatives. Specifically, assuming that $\lambda_l$ and $\lambda_{l,N}$ are nonzero, we define $ \varphi_l, \varphi_{l,N} \in C(X)$ such that 
\begin{displaymath}
    \varphi_l(x) = \frac{1}{\lambda_l} \int_X k(x,x') \phi_l(x') \, d\mu(x'), \quad \varphi_{l,N}(x) = \frac{1}{\lambda_{l,N}} \int_X k_N(x,x') \phi_{l,N}(x') \, d\mu_N(x').
\end{displaymath}
It then follows from \eqref{eqKEig} and \eqref{eqKNEig}, respectively, that $\varphi_l = \phi_l $ $\mu$-a.e.\ and $\varphi_{l,N} = \phi_{l,N}$ $\mu_N$-a.e. Note that the latter relation simply means that $\varphi_{l,N}(x_n) = \phi_{l,N}(x_n)$ for every $n \in \{ 0, \ldots, N-1 \}$.

The following theorem summarizes the spectral convergence of the operators $K_N$ to $K$ and the convergence of the associated basis functions. The results are based on techniques developed in ref.~\cite{VonLuxburgEtAl08}. Additional details and proofs for the setting of ergodic dynamical systems and data-dependent kernels employed in this work can be found, e.g., in refs.~\cite{DasEtAl21,Giannakis21a}.  

\begin{theorem}
    \label{thmK}
    Under \cref{assump1}, \cref{assump2}, and \cref{assumpK}, the following hold as $N\to\infty$ for $\mu$-a.e.\ initial condition $x_0 \in X$. 
    \begin{enumerate}[label=(\alph*)]
        \item For each nonzero eigenvalue $\lambda_l$ of $K$, the sequence of eigenvalues $\lambda_{l,N}$ of $K_N$ converges to $\lambda_l$, including multiplicities.
        \item If $\phi_l \in H$ is an eigenvector of $K$ corresponding to $\lambda_l$ with continuous representative $\varphi_l \in C(X)$, there exists a sequence of eigenvectors $\phi_{l,N}$ of $K_N$ corresponding to eigenvalue $\lambda_{l,N} $, whose continuous representatives $\varphi_{l,N} \in C(X)$ converge uniformly to $\varphi_l$.
    \end{enumerate}
\end{theorem}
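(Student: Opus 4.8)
The plan is to reduce the spectral convergence of the self-adjoint Hilbert-space operators $K_N$ on $\hat H_N$ to $K$ on $H$ --- which act on \emph{different} Hilbert spaces and so cannot be compared directly --- to a spectral-approximation problem for a single family of operators on the common Banach space $C(X)$, and then to invoke the collectively-compact approximation theory (Anselone) in the form used by~\cite{VonLuxburgEtAl08}. The reason this works is already visible in the excerpt: every eigenvector $\phi_l$ (resp.\ $\phi_{l,N}$) associated with a nonzero eigenvalue has a continuous representative $\varphi_l$ (resp.\ $\varphi_{l,N}$), obtained by applying the continuous-kernel integral operator once more.

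First I would introduce the lifted integral operators $\mathcal K, \mathcal K_N : C(X) \to C(X)$ defined by $\mathcal K g = \int_X k(\cdot, x') g(x') \, d\mu(x')$ and $\mathcal K_N g = \int_X k_N(\cdot, x') g(x') \, d\mu_N(x')$. Because $k, k_N$ are continuous on the compact product $X \times X$, these operators map into $C(X)$ and are compact (Arzel\`a--Ascoli). The first step is to show that the nonzero spectrum of $\mathcal K$ coincides, with multiplicities, with that of the self-adjoint $K$ on $H$, the eigenspaces being in bijection through the continuous representatives; the identical statement holds for $\mathcal K_N$ and $K_N$. This transfers the theorem to a statement about $\mathcal K_N \to \mathcal K$ on $C(X)$, while self-adjointness on the Hilbert-space side guarantees real eigenvalues and the agreement of algebraic and geometric multiplicities needed for the ``including multiplicities'' claim (so that no spurious Jordan blocks arise for $\mathcal K$ at nonzero eigenvalues).

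Next I would establish the mode of convergence. For fixed $g \in C(X)$ I would split $\mathcal K_N g - \mathcal K g$ into a kernel-error term, controlled uniformly by $\lVert k_N - k \rVert_\infty \lVert g \rVert_\infty \to 0$ from \cref{assumpK}(b), and a measure-error term $\int_X k(\cdot, x') g(x') \, (d\mu_N - d\mu)(x')$. The latter is handled by the ergodic theorem: \cref{assump2}(a) gives the $\mu$-a.e.\ weak-$^*$ convergence $\mu_N \to \mu$ of~\eqref{eqWeakConv}, which I would upgrade to convergence uniform in the base point $x$ using equicontinuity of the family $\{ k(x, \cdot) : x \in X \}$ (a consequence of uniform continuity of $k$ on the compact $X \times X$, so that the test functions $x' \mapsto k(x,x')g(x')$ form a totally bounded subset of $C(X)$). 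This yields pointwise convergence $\mathcal K_N g \to \mathcal K g$ in $C(X)$. Simultaneously, uniform boundedness of the kernels together with their equicontinuity gives, again via Arzel\`a--Ascoli, that the family $\{ \mathcal K_N \}$ is collectively compact.

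Finally, with pointwise convergence $\mathcal K_N \to \mathcal K$, collective compactness of $\{\mathcal K_N\}$, and compactness of $\mathcal K$ in hand, I would invoke the spectral-approximation theory for collectively compact operators: each nonzero eigenvalue $\lambda_l$ of $\mathcal K$ of multiplicity $m$ attracts exactly $m$ eigenvalues of $\mathcal K_N$ (with multiplicity), and the associated spectral projections converge in operator norm, forcing uniform convergence of a suitable choice of eigenvectors in $C(X)$ --- i.e.\ $\varphi_{l,N} \to \varphi_l$ uniformly. Transporting back through the first step yields (a) and (b). The main obstacle is precisely this convergence step: $\mathcal K_N \to \mathcal K$ does \emph{not} hold in operator norm, so ordinary perturbation theory does not apply; establishing the collectively-compact-plus-pointwise convergence $\mu$-a.s.\ --- in particular upgrading weak-$^*$ convergence of $\mu_N$ to convergence uniform in $x$ while simultaneously absorbing the data-dependent kernel error $k_N - k$ --- is the technical heart of the argument, and is carried out in detail in the cited references.
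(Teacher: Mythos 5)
Your proposal is correct and takes essentially the same route as the paper: the paper does not prove \cref{thmK} in-house but explicitly bases it on the spectral-approximation techniques of \cite{VonLuxburgEtAl08}, with details for the ergodic, data-dependent-kernel setting deferred to \cite{DasEtAl21,Giannakis21a}. Your outline---lifting $K$ and $K_N$ to compact integral operators on $C(X)$ via the continuous representatives, establishing $\mu$-a.s.\ pointwise convergence (uniform kernel error plus weak-$^*$ measure error upgraded over a totally bounded test family) together with collective compactness, and invoking Anselone-type spectral approximation to transfer eigenvalues with multiplicity and eigenfunctions in the uniform norm---is precisely the argument carried out in those references.
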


In numerical applications, we use the $\hat H_N \simeq \mathbb C^N$ isomorphism to represent the eigenvectors  $\phi_{l,N}$ by $N$-dimensional column vectors $\bm \phi_{l,N} \in \mathbb C^N$ (which are real since the $\phi_{l,N}$ are real) with $\bm \phi_{l,N} = (\phi_{l,N}(x_0), \ldots, \phi_{l,N}(x_{N-1}))^\top$. The vectors $\bm \phi_{l,N}$ are solutions of the eigenvalue problem
\begin{displaymath}
    \bm K_N \bm \phi_{l,N} = \lambda_{l,N} \bm \phi_{l,N} 
\end{displaymath}
for the $N\times N$ kernel matrix $\bm K_N = [K_{ij,N}]_{i,j=0}^{N-1} $ with $ K_{ij,N} =  k_N(x_i,x_j) / N$. We impose the orthonormality condition $ \bm \phi_{l,N} \cdot \bm \phi_{m,N} = \delta_{l,m}$, which is equivalent to $\langle \phi_{l,N}, \phi_{m,N} \rangle_N = \delta_{lm} $ on $\hat H_N$.

Henceforth, we will assume that for a given choice of basis vectors $\phi_l$ of $H$ and so long as $\lambda_l$ is nonzero, the data-driven basis vectors $\phi_{l,N}$ of $\hat H_N$ are chosen such that they converge to $\phi_l$ as per \cref{thmK}. This assumption leads to no loss of generality since every real, orthonormal eigenbasis $\phi_{l,N}$ can be orthogonally rotated to a basis that converges to $\phi_l$ without affecting the results of the computations presented below.

\subsection{Choice of kernel}

Since our training data $z_n$ are in the space $Z$, we employ kernels which are pullbacks of kernels on that space; specifically, we set $k(x,x') = \kappa(z(x),z(x'))$ and $k_N(x,x')=\kappa_N(z(x),z(x'))$, where $\kappa : Z \times Z \to \mathbb R$ and $\kappa_N : Z \times Z \to \mathbb R $ are continuous, symmetric kernels. With this approach, all kernel computations can be executed using the data $z_n \in Z$ without knowledge of the underlying dynamical states $x_n \in X$.  

Following ref.~\cite{DasEtAl21}, we construct the kernels $\kappa_N$ by applying the bistochastic normalization procedure introduced in ref.~\cite{CoifmanHirn13} to the family of variable-bandwidth diffusion kernels developed in ref.~\cite{BerryHarlim16}. 
Using the training data $z_n$, we construct a variable-bandwidth radial basis function kernel $\tilde\kappa_N : Z \times Z \to \mathbb R$ of the form
\begin{equation}
    \label{eqKVB}
    \tilde\kappa_N(z,z') = \eta_\text{gauss}\left( \frac{d(z,z')}{\epsilon_N \sqrt{b_N(z) b_N(z')}} \right),
\end{equation}
where $\eta_\text{gauss} : \mathbb R \to \mathbb R$ is the Gaussian shape function, $\eta_\text{gauss}(u) = e^{-u^2}$, $ d : Z \to Z \to \mathbb R_+$ is a distance function (which we nominally set to the Euclidean when $Z = \mathbb R^d$), $\epsilon_N > 0 $ is a bandwidth parameter, and $ b_N : Z \to \mathbb R_+$ is a (data-dependent) bandwidth function. The construction of the bandwidth function, which resembles a kernel density estimation procedure, is summarized in \cref{algBandwidth}. The bandwidth parameter $\epsilon_N$ is tuned automatically via \cref{algTuning}, which is described in \cref{appTuning} below. Further details on \cref{algBandwidth,algTuning} can be found in refs.~\cite{BerryEtAl15,BerryHarlim16}.

\begin{algorithm}
    \caption{\label{algBandwidth}Kernel bandwidth function.}
    \Inputs
    \begin{enumerate}
        \item Dataset $x_0,x_1,\ldots, x_{N-1} \in \mathcal X$; $\mathcal X$ is an arbitrary set.
        \item Distance function $d: \mathcal X \times \mathcal X \to \mathbb R_+$.
        \item Neighborhood parameter $k_\text{nn} \in \mathbb N$.
        \item Bandwidth exponent parameter $a>0$ and range parameters $J_1, J_2 \in \mathbb N$.  
    \end{enumerate}

    \Outputs
    \begin{enumerate}
        \item Bandwidth function $b: \mathcal X \to \mathbb R_+$.
    \end{enumerate}
    \Steps
    \begin{enumerate}
        \item Construct the function $ r : \mathcal X \to \mathbb R_+$, where $r^2(x) = \sum_{j=1}^{k_\text{nn}} d^2(x, I(x,j) ) / k_\text{nn} $ and  $I(x,j) \in \{ 0, \ldots, N - 1\}$ is the index of the $j$-th nearest neighbor of $x$ in the $\{ x_i \}_{i=0}^{N-1}$ dataset with respect to the distance $d$. 
        \item Construct the distance-like function $\tilde d : \mathcal X \times \mathcal X \to \mathbb R$ with $\tilde d(x,x') = d(x,x') / \sqrt{r(x) r(x')}$.
        \item Execute \cref{algTuning} with inputs $\{ x_n \}_{n=0}^{N-1}$, $\tilde d$, $\eta_\text{gauss}$, $k_\text{nn}$, $a$, $J_1$, and $J_2$ to obtain an optimal bandwidth $\epsilon_*$ and dimension estimate $m_*$.
        \item Construct the kernel $\tilde k : \mathcal X \times \mathcal X \to \mathbb R_+$, where $\tilde k(x,x') = \eta_\text{gauss}(\tilde d(x,x')/\epsilon_*)$.
        \item \Return The function $b : \mathcal X \to \mathbb R_+$ such that $b(x) = \sum_{j=0}^{N-1} \tilde k(x,x_j) / (N(\pi \epsilon_* r^2(x))^{m_*/2})$.
    \end{enumerate}
\end{algorithm}

Using the kernel $\tilde\kappa_N$, we perform the sequence of normalization steps described in ref.~\cite{CoifmanHirn13} to obtain a symmetric, positive, positive-definite kernel $\kappa_N$ which is Markovian with respect to the pushforward $ \nu_N := z_*(\mu_N)$ of the sampling measure on $Z$,
\begin{displaymath}
    \int_Z \kappa_N(z,z') \, d\nu_N(z') = \frac{1}{N}\sum_{n=0}^{N-1} \kappa_N(z,z_n) = 1, \quad \forall z \in Z.
\end{displaymath}
\cref{algBasis} describes the computation of the eigenvectors $\bm\phi_{l,N}$ associated with this kernel. We note that due to the particular form of the normalization leading to $\kappa_N$, the eigenvectors $\bm \phi_{l,N} $ can be computed without explicit formation of the $N\times N$ kernel matrix $\bm K_N$. Instead, we compute the $\bm \phi_{l,N}$ through the singular value decomposition (SVD) of an  $N\times N$ kernel matrix $\tbm K_N = [ \tilde \kappa_N(z_i,z_j) ]_{i,j=0}^{N-1} $ associated with a non-symmetric kernel function $\tilde \kappa_N : Z \times Z \to \mathbb R$ that factorizes $\bm K_N$ as $\bm K_N = N \tbm K_N \tbm K_N^\top $. The steps leading to $\tilde \kappa_N$ are listed in \cref{algBistoch}. See Appendix~B in ref.~\cite{DasEtAl21} for further details. 

\begin{algorithm}
    \caption{\label{algBasis}Orthonormal basis vectors of $\hat H_N$ from variable-bandwidth bistochastic kernel. We suppress $N$ subscripts from our notation for $\bm \phi_{l,N}$.}
    \Inputs 
    \begin{enumerate}
        \item Dataset $z_0,z_1,\ldots, z_{N-1} \in Z$.
        \item Distance function $d: Z \times Z \to \mathbb R_+$.
        \item Neighborhood parameter $k_\text{nn} \in \mathbb N$.
        \item Bandwidth exponent parameter $a>0$ and range parameters $J_1, J_2 \in \mathbb N$.  
        \item Number of basis vectors $L \leq N$.
    \end{enumerate}

    \Outputs
    \begin{enumerate}
        \item Column vectors $\bm \phi_0, \ldots, \bm \phi_{L-1} \in \mathbb R^N$.
    \end{enumerate}

    \Steps
    \begin{enumerate}
        \item Execute \cref{algBandwidth} with inputs $\{z_n\}_{n=0}^{N-1}$, $d$, $a$, $J_1$, and $J_2$ to obtain a bandwidth function $r : Z \to \mathbb R_+$.
        \item Construct the distance-like function $\tilde d : Z \times Z \to \mathbb R_+$ with $\tilde d(z,z') = d(z,z') / \sqrt{r(z) r(z')}$.
        \item Execute \cref{algTuning} with inputs $\{ z_n \}_{n=0}^{N-1}$, $\tilde d$, $\eta_\text{gauss}$, $a$, $J_1$, and $J_2$  to obtain an optimal bandwidth $\epsilon_*$ and dimension estimate $m_*$.
        \item Construct the kernel $k : Z \times Z \to \mathbb R_+$ with $ k(z,z') = \eta_\text{gauss}(\tilde d(z,z')/\epsilon_*)$.
        \item Execute \cref{algBistoch} with inputs $\{ z_n \}_{n=0}^{N-1}$ and $k$ to obtain a non-symmetric kernel function $\hat k : Z \times Z \to \mathbb R_+$.
        \item Form the $N\times N$ kernel matrix $\hbm K = [\hat K_{ij}]_{i,j=0}^{N-1}$ with $\hat K_{ij} =\hat k(z_i, z_j)$. 
        \item \Return The leading $L$ left singular vectors $\bm\phi_0, \ldots, \bm \phi_{L-1}$ of $\hbm K$, arranged in order of decreasing corresponding singular value, and normalized such that $\lVert \bm \phi_l \rVert_2 = \sqrt{N}$.
    \end{enumerate}
\end{algorithm}

\begin{algorithm}
    \caption{\label{algBistoch}Factorization of bistochastic kernel function.}
    \Inputs
    \begin{enumerate}
        \item Dataset $x_0,x_1,\ldots, x_{N-1} \in \mathcal X$; $\mathcal X$ is an arbitrary set.
        \item Kernel function $k: \mathcal X \times \mathcal X \to \mathbb R_+$.
    \end{enumerate}

    \Outputs
    \begin{enumerate}
        \item Non-symmetric kernel function $\hat k : \mathcal X \times \mathcal X \to \mathbb R$. 
    \end{enumerate}

    \Steps
    \begin{enumerate}
        \item Construct the degree function $ d : \mathcal X \to \mathbb R_+ $, where $d(x) = \sum_{j=0}^{N-1} k(x,x_j)$. 
        \item Construct the function $ q : \mathcal X \to \mathbb R_+ $, where $q(x) = \sum_{j=0}^{N-1} k(x, x_j) / d(x_j)$. 
        \item \Return The kernel function $\hat k : \mathcal X \times \mathcal X \to \mathbb R_+ $, where $ \hat k(x,x') = k(x, x') / ( d(x) q^{1/2}(x'))$.
    \end{enumerate}
\end{algorithm}

In addition to the bistochastic kernel from \cref{algBasis}, QMDA can be implemented with a variety of kernels, including non-symmetric kernels satisfying a detailed-balance condition (e.g., the family of normalized kernels from the diffusion maps algorithm \cite{CoifmanLafon06}). Two basic guidelines on kernel choice are that the data-dependent kernels $k_N$ are regular-enough such that the integral operators $K_N$ converge spectrally to $K$ (in the sense of \cref{thmK}), and the limit kernel $k$ is ``rich-enough'' such that all eigenvalues $\lambda_l$ are strictly positive (i.e., $k$ is an $L^2(\mu)$ integrally strictly-positive kernel \cite{SriperumbudurEtAl11}). In that case, as $N$ and $l$ increase, the eigenvectors $\phi_{l,N}$ provide a consistent approximation of an orthonormal basis for the entire Hilbert space $H$. The bistochastic kernels $k_N$ from \cref{algBasis} have this property if the map $z: X \to Z$ is injective. The latter, holds for sufficiently large delay parameter $Q$ from~\eqref{eqDelay} under appropriate assumptions on delay-coordinate maps \cite{SauerEtAl91}. 

For certain classes of kernels $k$ constructed from shape functions with rapid decay (e.g., the Gaussian shape function $\eta_\text{gauss}$), the asymptotic behavior of the eigenfunctions in the limit of vanishing bandwidth parameter $\epsilon_N$ may be studied using the theory of heat kernels  \cite{GrigoryanEtAl14}. Under appropriate conditions (e.g., the support of the invariant measure $\mu$ is a differentiable manifold or a metric measure space), the eigenfunctions are extremizers of a Dirichlet energy induced by the kernel, which defines a notion of regularity of functions akin to a Sobolev norm. In such cases, for any given $L \in \mathbb N$, the set of orthonormal vectors $ \{ \phi_0, \ldots, \phi_{L-1} \} $ (which we will use in \cref{appOpApprox} to define the subspaces $H_L \subset H$ used in QMDA) is optimal in the sense of having maximal regularity with respect to the kernel-induced Dirichlet energy.

\subsection{\label{appTuning}Bandwidth tuning} 

\cref{algTuning} is a tuning procedure for bandwidth-dependent kernels $k: \mathcal X \times \mathcal X \to \mathbb R_+ $ of the form $k(x,x') = \kappa( d(x, x' ) / \epsilon )$, where $\mathcal X$ is an arbitrary set, $ d : \mathcal X  \times \mathcal X \to \mathbb R_+$ is a distance-like function, $ \kappa : \mathbb R_+ \to \mathbb R_+$ a positive kernel shape function, and $\epsilon > 0 $ a kernel bandwidth parameter. The tuning approach in \cref{algTuning} was proposed in ref.~\cite{CoifmanEtAl08} using scaling arguments for heat-like kernels on manifolds, and was also used in refs.~\cite{BerryEtAl15,BerryHarlim16,Giannakis19b}. It takes as input a dataset in $\mathcal X$ and a logarithmic grid of candidate bandwidth values $\epsilon_j$, and returns an ``optimal`` bandwidth $\epsilon_*$ from this candidate set that maximizes a kernel-induced dimension function $ m(\epsilon_j)$ for the dataset. If $k$ is a heat-like kernel on a Riemannian manifold, $m(\epsilon_*)$ is an estimator of the manifold's dimension, but $m(\epsilon_*)$ also provides a notion of dimension for non-smooth sets.   

\begin{algorithm}
    \caption{\label{algTuning}Tuning of bandwidth-dependent kernels.}
    \Inputs
    \begin{enumerate}
        \item Dataset $x_0, x_1, \ldots, x_{N-1} \in \mathcal X$; $\mathcal X$ is an arbitrary set.
        \item Bandwidth exponent parameter $a>0$ and range parameters $J_1, J_2 \in \mathbb N$.  
        \item Distance-like function $d: \mathcal X \times \mathcal X \to \mathbb R_+$. 
        \item Kernel shape function $ \eta : \mathbb R_+ \to \mathbb R_+ $.
    \end{enumerate}

    \Outputs
    \begin{enumerate}
        \item Optimal bandwidth $\epsilon_* > 0$.
        \item Estimated dataset dimension $m_* \geq 0$. 
    \end{enumerate}
    \Steps
    \begin{enumerate}
        \item Compute the $N\times N$ pairwise distance matrix $\bm D = [ D_{ij} ]_{i,j=0}^{N-1}$ with $D_{ij} = d(x_i,x_j)$. 
        \item Generate logarithmic grid $\{ \epsilon_j \}_{j=J_1}^{J_2}$ with $\epsilon_j = 2^{a j}$.
        \item For each $j \in \{ J_1, \ldots, J_2 \}$, compute the kernel sum $S(\epsilon_j) = \sum_{i,l=0}^{N-1} K_{il}/N^2$, where $K_{il} = \eta(D_{il} / \epsilon_j)$.
        \item For each $ j \in \{ J_1 + 1, \ldots, J_2 - 1\}$, compute the logarithmic derivative 
            \begin{displaymath}
                m(\epsilon_j) = \frac{\log S_{j+1} - \log S_{j-1} }{\log \epsilon_{j+1} - \log \epsilon_{j-1}} = \frac{\log(S_{j+1}/S_{j-1})}{2a}. 
            \end{displaymath}
        \item \Return $\epsilon_* = \argmax_{\epsilon_j \in \{ \epsilon_{J_1}, \ldots, \epsilon_{J_2} \}} m(\epsilon_j)$ and $m_* = m(\epsilon_*)$.
    \end{enumerate}
\end{algorithm}

\subsection{\label{appOpApprox}Finite-dimensional Hilbert spaces and operator approximation}

Given the basis vectors $\phi_l$ and $\phi_{l,N}$ from \cref{appBasis}, we define the $L$-dimensional Hilbert spaces
\begin{displaymath}
    H_L = \spn\{\phi_0, \ldots, \phi_{L-1} \} \subset H, \quad H_{L,N} = \spn\{ \phi_{0,N}, \ldots, \phi_{L-1,N} \} \subset \hat H_N,
\end{displaymath}
where in the case of $H_{L,N}$ $L$ is at most $N$. As in the main text, we let $\Pi_L : H \to H$ and $\Pi_{L,N}: \hat H_N \to \hat H_N $ be the orthogonal projections on $H$ and $\hat H_N$, respectively, with $\ran \Pi_L = H_L$ and $\ran \Pi_{L,N} = H_{L,N}$. We also let $\bm \Pi_L : \mathfrak B \to \mathfrak B$ and $ \bm \Pi_{L,N} : \hat{\mathfrak B}_N \to \hat{\mathfrak B}_N$ be the induced projections on the operator algebras $\mathfrak B = B(H)$ and $\mathfrak B_N = B(\hat H_N)$, defined as $ \bm \Pi_L A = \Pi_L A \Pi_L$ and $ \bm \Pi_{L,N} A = \Pi_{L,N} \hat A \Pi_{L,N}$, respectively. Defining $\mathfrak B_L = \ran \bm \Pi_L$, we can canonically identify $\mathfrak B_L$ with the subalgebra of $\mathfrak B$ consisting of all operators $A$ satisfying $\ker A \supseteq H_L$ and $\ran A \subseteq H_L $. The space $\mathfrak B_{L,N} := \ran \bm \Pi_{L,N}$ can be canonically identified with a subalgebra of $\hat{\mathfrak B}_N$ in a similar manner. We will be making these identifications whenever convenient. 

Within this setting, we are interested in the following two types of operator approximation, respectively described in \cref{appOpCompression,appOpDataApprox}. 
\begin{enumerate}
    \item Approximation of an operator $A \in \mathfrak B$ by a finite-rank operator $A_L \in \mathfrak B_L$.
    \item Approximation of $A_L \in \mathfrak B_L$ by an operator $A_{L,N} \in \mathfrak B_{L,N}$.
\end{enumerate}
Intuitively, we think of an approximation of the first type listed above as a ``compression'' of an operator $A \in \mathfrak B$ of possibly infinite rank to an operator $A_L \in \mathfrak B_L \subset \mathfrak B$ of at most rank $L$. Approximations of the second type are of a fundamentally different nature since there are no inclusion relationships between $\mathfrak B_L$ and $\mathfrak B_{L,N}$. One can think instead of such approximations as data-driven approximations of the \emph{representation} of an operator in a basis.  

\subsubsection{\label{appOpCompression}Operator compression} 

Given $A \in \mathfrak B$, we define $A_L \in \mathfrak B_L$ as 
\begin{equation}
    \label{eqAL}
    A_L := \bm \Pi_L A = \Pi_L A \Pi_L, 
\end{equation}
Since $ \{ \phi_0, \phi_1, \ldots \}$ is an orthonormal basis of $H$, the projections $\Pi_L$ converge strongly to the identity; that is, for every $f \in H$, we have $\lim_{L\to\infty}(\Pi_L - I ) f = 0$, where the limit is taken in the norm of $H$. As a result, the operators $\breve A_L := A \Pi_L$ converge strongly to $A$, $\lim_{L\to\infty}(\breve A_L - A ) f = 0$ for all $ f \in H$.
It then follows from standard results in functional analysis that $A_L = \Pi_L \breve A_L $ converges strongly to $A$, i.e, 
\begin{equation}
    \lim_{L\to\infty} A_L f= A f, \quad \forall f \in H.
    \label{eqALConv}
\end{equation}
As we will see below, this type of strong operator convergence is sufficient to deduce convergence of the matrix mechanical formulation of data assimilation based on $\mathfrak B_L$ to the infinite-dimensional quantum mechanical level based on $\mathfrak B$ (see the rows labeled \Lm\ and \Lq\ in the schematic of \cref{figSchematic}). 

\subsubsection{\label{appOpDataApprox}Data-driven operator approximation} 

In order to facilitate approximation of operators in $\mathfrak B_L$ by operators in $\mathfrak B_{L,N}$, we use operators acting on the Banach space of continuous functions $C(X)$ as intermediate approximations. In what follows, we will let $\iota : C(X) \to H$ and $\iota_N : C(X) \to \hat H_N$ be the canonical linear maps that map continuous functions to their $L^2$ equivalence classes in $H$ and $\hat H_N$, respectively. In addition, we let $\mathfrak C = B(C(X))$ be the unital Banach algebra of bounded linear operators on $C(X)$. We assume $L\in \mathbb N$ is chosen such that the eigenvalues $\lambda_{L-1}$ and $\lambda_{L-1,N}$ of $K$ and $K_N$ from~\eqref{eqKEig} and~\eqref{eqKNEig}, respectively, are nonzero. This means that all elements of $H_L$ and $H_{L,N}$ have continuous representatives. 

With these definitions and assumptions, we restrict attention to approximation of operators $A_L \in \mathfrak B_{L,N}$ which are obtained by applying \eqref{eqALConv} to operators $A \in \mathfrak B$ that satisfy
\begin{equation}
    \label{eqIota}
    A \circ \iota = \iota \circ \tilde A,
\end{equation}
for some $\tilde A \in \mathfrak C$. In addition, we assume that there is a uniformly bounded family of operators $\hat A_N \in \mathfrak B_N$ that satisfy an approximate version of~\eqref{eqIota} in the following sense: For every $ f \in C(X)$, the norm of the residual $(\hat A_N \circ \iota_N - \iota_N \circ \tilde A) f$ converges to 0. That is, we require
\begin{equation}
    \label{eqIotaN}
    \lim_{N\to\infty} \lVert R_N f \rVert_{\hat H_N} = 0, \quad R_N = \hat A_N \circ \iota_N - \iota_N \circ \tilde A, \quad \forall f \in C(X),
\end{equation}
where the operators $ \hat A_N$ satisfy the uniform norm bound
\begin{equation}
    \label{eqIotaN2}
    \lVert \hat A_N \rVert_{\hat{\mathfrak B}_N} \leq a,
\end{equation}
for a constant $a$. As we will see in the ensuing subsections, under \cref{assump2}, all operators employed in QMDA satisfy~\eqref{eqIota}, \eqref{eqIotaN}, and~\eqref{eqIotaN2}. 

We have the following approximation lemma for the matrix elements of $A$ in terms of the matrix elements of $\hat A_N$. 

\begin{lemma} 
    \label{lemApprox}
Suppose that $A \in \mathfrak B$, $\hat A_N \in \hat{\mathfrak B}_N$, and $\tilde A \in \mathfrak C$ satisfy~\eqref{eqIota}, \eqref{eqIotaN}, and~\eqref{eqIotaN2}. Then, under \cref{assump1} and \cref{assump2}, and with the notation and assumptions of \cref{appBasis}, the matrix elements of $\hat A_N$ in the $\{\phi_{l,N} \}$ bases of $\hat H_N$ converge almost surely to the matrix elements of $A$ in the $\{\phi_l \}$ basis of $H$. That is, for $\mu$-a.e.\ initial condition $x_0 \in X$, and every $i,j \in \mathbb N$ such that $\lambda_i, \lambda_j \neq 0$,
    \begin{displaymath}
        \lim_{N\to\infty} \langle \phi_{i,N}, \hat A_N \phi_{j,N} \rangle_N = \langle \phi_i, A \phi_j \rangle.
    \end{displaymath}
\end{lemma}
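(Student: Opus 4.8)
The plan is to rewrite each matrix element in terms of the continuous representatives of the eigenvectors and then pass to the limit using the two regularity inputs available: the uniform convergence $\varphi_{l,N} \to \varphi_l$ from \cref{thmK}(b), and the ergodic weak-$^*$ convergence $\mu_N \to \mu$ from~\eqref{eqWeakConv}. Since $\lambda_i, \lambda_j \neq 0$, the vectors $\phi_{i,N}, \phi_{j,N}$ have continuous representatives $\varphi_{i,N}, \varphi_{j,N}$, so that $\phi_{j,N} = \iota_N\varphi_{j,N}$ and $\phi_{i,N} = \iota_N\varphi_{i,N}$ in $\hat H_N$, and likewise $\phi_i = \iota\varphi_i$, $\phi_j = \iota\varphi_j$ in $H$. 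Using~\eqref{eqIota}, the target value is
\[
    \langle\phi_i, A\phi_j\rangle = \langle\iota\varphi_i, A\iota\varphi_j\rangle = \langle\iota\varphi_i, \iota\tilde A\varphi_j\rangle = \int_X \varphi_i^*\,(\tilde A\varphi_j)\,d\mu,
\]
where $\tilde A\varphi_j \in C(X)$ since $\tilde A \in \mathfrak C$. The goal is therefore to show that $\langle\phi_{i,N}, \hat A_N\phi_{j,N}\rangle_N$ converges to this integral.

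First I would handle the $N$-dependence of the \emph{input} vector $\varphi_{j,N}$, which is the only place where~\eqref{eqIotaN} cannot be applied directly, since that residual bound is stated for a fixed continuous function. Writing $\varphi_{j,N} = \varphi_j + \delta_{j,N}$ with $\lVert\delta_{j,N}\rVert_{C(X)} \to 0$ by \cref{thmK}(b), I decompose
\[
    \hat A_N\phi_{j,N} = \hat A_N\iota_N\varphi_j + \hat A_N\iota_N\delta_{j,N} = \iota_N\tilde A\varphi_j + R_N\varphi_j + \hat A_N\iota_N\delta_{j,N},
\]
with $R_N = \hat A_N\circ\iota_N - \iota_N\circ\tilde A$ as in~\eqref{eqIotaN}. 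The term $R_N\varphi_j$ has $\hat H_N$-norm tending to $0$ by~\eqref{eqIotaN} applied to the fixed function $\varphi_j$. For the last term, the uniform operator bound~\eqref{eqIotaN2} together with $\lVert\iota_N g\rVert_{\hat H_N} \leq \lVert g\rVert_{C(X)}$ (valid since $\mu_N$ is a probability measure) gives $\lVert\hat A_N\iota_N\delta_{j,N}\rVert_{\hat H_N} \leq a\lVert\delta_{j,N}\rVert_{C(X)} \to 0$. Taking the inner product with the unit vector $\phi_{i,N}$ and applying Cauchy--Schwarz to these two vanishing contributions reduces the claim to showing that $\int_X \varphi_{i,N}^*\,(\tilde A\varphi_j)\,d\mu_N$ converges to $\int_X \varphi_i^*\,(\tilde A\varphi_j)\,d\mu$.

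This final step I would split once more to separate the two remaining sources of $N$-dependence. Writing $\varphi_{i,N} = \varphi_i + \delta_{i,N}$, the correction $\int_X \delta_{i,N}^*(\tilde A\varphi_j)\,d\mu_N$ is bounded in modulus by $\lVert\delta_{i,N}\rVert_{C(X)}\lVert\tilde A\varphi_j\rVert_{C(X)}\to 0$, again by \cref{thmK}(b) and $\mu_N$ being a probability measure. The remaining term $\int_X \varphi_i^*(\tilde A\varphi_j)\,d\mu_N$ is the $\mu_N$-integral of the \emph{fixed} continuous function $\varphi_i^*(\tilde A\varphi_j) \in C(X)$, which converges to its $\mu$-integral for $\mu$-a.e.\ $x_0$ by~\eqref{eqWeakConv} under \cref{assump2}(a); since this convergence holds simultaneously for all continuous functions off a single null set, the same exceptional set serves the countable family of pairs $(i,j)$. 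Combining the three estimates yields the stated convergence.

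The main obstacle is the coupling between the moving basis vectors and the $N$-dependent operators and measures: neither~\eqref{eqIotaN} nor~\eqref{eqWeakConv} applies to a target that drifts with $N$, so the argument hinges on peeling off the uniformly small perturbations $\delta_{i,N}$ and $\delta_{j,N}$ using the uniform bound~\eqref{eqIotaN2} \emph{before} invoking those two limits on fixed continuous functions. Carefully tracking which quantities are held fixed and which vary with $N$ is the crux of the estimate.
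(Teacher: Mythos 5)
Your proposal is correct and follows essentially the same route as the paper's proof: both arguments peel off the uniformly small perturbations $\varphi_{j,N}-\varphi_j$ and $\varphi_{i,N}-\varphi_i$ using the uniform bound~\eqref{eqIotaN2} (respectively the boundedness of $\tilde A$), apply~\eqref{eqIotaN} to the fixed function $\varphi_j$, and conclude via the weak-$^*$ convergence~\eqref{eqWeakConv} applied to the fixed continuous function $\varphi_i^*\,\tilde A\varphi_j$. The only differences are cosmetic: you decompose the vector $\hat A_N\phi_{j,N}$ and invoke Cauchy--Schwarz where the paper manipulates inner-product differences directly, and you make the countable-union-of-null-sets bookkeeping explicit where the paper leaves it implicit.
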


\begin{proof}
    See \cref{appProofApproxLemma}.   
\end{proof}

Let $A_{ij} = \langle \phi_i, A \phi_j \rangle$ and $A_{ij,N} = \langle \phi_{i,N}, \hat A_N \phi_{j,N} \rangle_N $. The convergence of $ A_{ij,N}$ to $A_{ij}$ from \cref{lemApprox} is not uniform with respect to $i,j \in \mathbb N$. However, restricting $i$ and $j$ to the finite index set $\{0, \ldots, L-1\}$ associated with the basis vectors of the finite-dimensional spaces $H_L$ and $H_{L,N}$ makes the convergence of $\hat A_{ij,N}$ to $ \hat A_{ij}$ uniform, and we can conclude that the matrix representations of the projected operators $ A_{L,N} = \bm \Pi_{L,N} \hat A_N$ converge to the matrix representation of $ A_L = \bm \Pi_L A$. 

\begin{corollary}
    \label{corApprox}
    With notation as above, let $\bm A_L = [ A_{ij} ]_{i,j=0}^{L-1} $ and $\bm A_{L,N} = [A_{ij,N}]_{i,j=0}^{L-1}$ be the $L\times L$ matrix representations of $A_L$ and $A_{L,N}$ in the $\{\phi_l \}$ and $\{\phi_{l,N}\}$ bases of $H_L$ and $H_{L,N}$, respectively. Then, for $\mu$-a.e.\ initial condition $x_0$, we have $\lim_{N\to\infty} \bm A_{L,N} = \bm A_L$ in any matrix norm.   
\end{corollary}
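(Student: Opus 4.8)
The plan is to derive convergence in an arbitrary matrix norm directly from the entrywise convergence already furnished by \cref{lemApprox}, using the crucial fact that both $\bm A_L$ and $\bm A_{L,N}$ reside in the \emph{fixed} finite-dimensional space $\mathbb M_L \cong \mathbb C^{L^2}$, on which all norms are equivalent. First I would verify that the hypotheses of \cref{lemApprox} hold for every index pair appearing in the matrices. By the standing assumption of \cref{appOpDataApprox}, $L$ is chosen so that $\lambda_{L-1} \neq 0$; since the eigenvalues $\lambda_0, \lambda_1, \ldots$ are ordered in decreasing modulus, this forces $\lambda_i \neq 0$ for every $i \in \{0, \ldots, L-1\}$. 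Hence \cref{lemApprox} applies to each of the $L^2$ pairs $(i,j) \in \{0,\ldots,L-1\}^2$, yielding $\lim_{N\to\infty} A_{ij,N} = A_{ij}$ for $\mu$-a.e.\ $x_0$. Because only finitely many index pairs are involved, the $\mu$-a.e.\ qualifier causes no trouble: intersecting the (at most $L^2$) full-measure sets of initial conditions on which the individual limits hold produces a single full-measure set on which all $L^2$ limits hold simultaneously.

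Fixing such an $x_0$, I would then convert entrywise convergence into norm convergence. Entrywise convergence of a finite array is exactly convergence in the max-entry norm $\lVert \bm B \rVert_{\max} = \max_{i,j} \lvert B_{ij}\rvert$: given $\varepsilon > 0$, choose for each pair $(i,j)$ an index $N_{ij}$ beyond which $\lvert A_{ij,N} - A_{ij}\rvert < \varepsilon$, and set $N_* = \max_{i,j} N_{ij}$ (a finite maximum over the finitely many pairs); then $\lVert \bm A_{L,N} - \bm A_L\rVert_{\max} < \varepsilon$ for all $N \geq N_*$, so $\bm A_{L,N} \to \bm A_L$ in $\lVert\cdot\rVert_{\max}$. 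Finally, since $\mathbb M_L$ is finite-dimensional, all norms on it are equivalent; in particular, for any matrix norm $\lVert\cdot\rVert$ there is a constant $c$ with $\lVert \bm B \rVert \leq c\, \lVert \bm B \rVert_{\max}$. Applying this with $\bm B = \bm A_{L,N} - \bm A_L$ gives $\lim_{N\to\infty}\lVert \bm A_{L,N} - \bm A_L\rVert = 0$, which is the assertion.

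I do not anticipate a genuine obstacle here: the substance of the corollary is the elementary upgrade from pointwise-in-$(i,j)$ convergence to uniform convergence over a \emph{finite} index set, combined with finite-dimensional norm equivalence. The only point demanding a little care is the bookkeeping of the $\mu$-a.e.\ initial conditions, which is dispatched by the finite intersection noted above; this is precisely the step flagged in the surrounding discussion where restricting $i,j$ to $\{0,\ldots,L-1\}$ renders the convergence uniform.
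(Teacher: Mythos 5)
Your proposal is correct and follows essentially the same route as the paper, which deduces the corollary directly from \cref{lemApprox} by noting that restricting $i,j$ to the finite index set $\{0,\ldots,L-1\}$ (on which all $\lambda_i$ are nonzero, by the ordering of eigenvalues and the standing assumption $\lambda_{L-1}\neq 0$) makes the entrywise convergence uniform, whence convergence in any matrix norm follows from finite-dimensional norm equivalence. Your explicit bookkeeping of the intersection of the finitely many full-measure sets of initial conditions is a detail the paper leaves implicit, but it is exactly the intended argument.
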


\subsubsection{\label{appProofApproxLemma}Proof of \cref{lemApprox}}

Recall from \cref{appBasis} that the $\phi_{i,N}$ have continuous representatives $\varphi_{i,N}$ which converge $\mu$-a.s.\ to the continuous representatives $\varphi_i$ of $\phi_i$ in the uniform norm of $C(X)$. Note also that for every $N \in \mathbb N$, $\iota_N : C(X)\to \hat H_N$ has unit operator norm. Using these facts, we get
\begin{align}
    \nonumber\lvert \langle \phi_{i,N}, \hat A_N \phi_{j,N} \rangle_N - \langle \phi_i, A \phi_j \rangle \rvert &= \lvert \langle \phi_{i,N}, \hat A_N \iota_N \varphi_{j,N} \rangle_N - \langle \phi_i, A \phi_j \rangle \rvert  \\
    \nonumber& \leq \lvert \langle \phi_{i,N}, \hat A_N \iota_N (\varphi_{j,N} - \varphi_j) \rangle_N \rvert \\
    \nonumber & \quad + \lvert \langle \phi_{i,N}, \hat A_N \iota_N \varphi_j \rangle_N - \langle \phi_i, A \phi_j \rangle \rvert  \\
    \nonumber &\leq a \lVert \varphi_{j,N} - \varphi_j \rVert_{C(X)} \\
    \label{eqProof1} & \quad + \lvert \langle \phi_{i,N}, \hat A_N \iota_N \varphi_j \rangle_N - \langle \phi_i, A \phi_j \rangle \rvert.
\end{align}
Moreover, we have 
\begin{align}
    \nonumber\lvert \langle \phi_{i,N}, \hat A_N \iota_N \varphi_j \rangle_N - \langle \phi_i, A \phi_j \rangle \rvert &= \lvert \langle \phi_{i,N}, \iota_N \tilde A \varphi_j \rangle_N + \langle \phi_{i,N} R_N \varphi_j \rangle_N - \langle \phi_i, A \phi_j \rangle \rvert \\
    \nonumber& \leq \lvert \langle \phi_{i,N}, \iota_N \tilde A \varphi_j \rangle_N - \langle \phi_i, A \phi_j \rangle \rvert + \lVert R_N \varphi_j \rVert_{\hat H_N} \\
    \nonumber& =\lvert \langle\iota_N \varphi_{i,N}, \iota_N\tilde A \varphi_j \rangle_N - \langle \phi_i, A \phi_j \rangle \rvert + \lVert R_N \varphi_j \rVert_{\hat H_N}\\
    \nonumber& =\lvert \langle \iota_N (\varphi_{i,N} - \varphi_i ), \iota_N \tilde A \varphi_j \rangle_N + \langle \iota_N\varphi_i , \iota_N\tilde A \varphi_j \rangle_N \\
    \nonumber & \quad - \langle \phi_i, A \phi_j \rangle \rvert + \lVert R_N \varphi_j \rVert_{\hat H_N} \\
    \nonumber & \leq \lVert \varphi_{i,N} - \varphi_i \rVert_{C(X)} \lVert \tilde A\rVert_{\mathfrak C} \lVert \varphi_j \rVert_{C(X)} \\
    \label{eqProof2} & \quad + \lvert \langle \iota_N\varphi_i , \iota_N\tilde A \varphi_j \rangle_N- \langle \phi_i, A \phi_j \rangle \rvert + \lVert R_N \varphi_j \rVert_{\hat H_N}. 
\end{align}

Now, by~\eqref{eqIota} we have
\begin{align*}
    \lvert \langle \iota_N\varphi_i , \iota_N\tilde A \varphi_j \rangle_N- \langle \phi_i, A \phi_j \rangle \rvert &= \lvert \langle \iota_N\varphi_i , \iota_N\tilde A \varphi_j \rangle_N- \langle \iota\varphi_i, A \iota \varphi_j \rangle \rvert \\
    &= \lvert \langle \iota_N\varphi_i , \iota_N\tilde A \varphi_j \rangle_N- \langle \iota\varphi_i, \iota \tilde A \varphi_j \rangle \rvert \\
    &= \left\lvert \int_X \varphi_i \tilde A \varphi_j d\mu_N- \int_X \varphi_i \tilde A \varphi_j \,d\mu \right\rvert, 
\end{align*}
so by the weak-$^*$ convergence of $\mu_N$ to $\mu$ (see \eqref{eqWeakConv}) it follows that for $\mu$-a.e.\ initial condition $x_0$, 
\begin{displaymath}
    \lim_{N\to\infty}\lvert \langle \iota_N\varphi_i , \iota_N\tilde A \varphi_j \rangle_N- \langle \phi_i, A \phi_j \rangle \rvert = 0.
\end{displaymath}
Using this result, the uniform convergence of $\varphi_{i,N}$ to $\varphi_i$, and~\eqref{eqIotaN} in~\eqref{eqProof2}, we obtain
\begin{displaymath}
    \lim_{N\to\infty}\lvert \langle \phi_{i,N}, \hat A_N \iota_N \varphi_j \rangle_N - \langle \phi_i, A \phi_j \rangle \rvert = 0.
\end{displaymath}
Finally, using the above and the uniform convergence of $\varphi_{i,N}$ to $\varphi_i$ in~\eqref{eqProof1}, we arrive at 
\begin{displaymath}
    \lim_{N\to\infty}\lvert \langle \phi_{i,N}, \hat A_N \phi_{j,N} \rangle_N - \langle \phi_i, A \phi_j \rangle \rvert = 0,
\end{displaymath}
which holds again for $\mu$-a.e.\ initial condition $x_0$. This completes the proof of the lemma.

\subsection{\label{appState}Approximation of states}

Let $\omega_\rho \in \ssb$ be a normal state of $\mathfrak B$ induced by a density operator $\rho \in \mathfrak B_*$. We recall that the predual $\mathfrak B_* $ of $ \mathfrak B$ is the space of trace-class operators on $H$ (denoted as $B_1(H)$ in the main text), equipped with the trace norm, $\lVert A \rVert_{\mathfrak B_*} = \tr\sqrt{A^* A}$. In the case of the finite-dimensional algebras $\mathfrak B_L$ and $\mathfrak B_{L,N}$, the preduals $\mathfrak B_{L*}$ and $\mathfrak B_{L,N*}$, respectively, can be identified with the algebras themselves, but we will continue to distinguish them using $_*$ subscripts since they are equipped with a different norm (the trace norm) from the operator norm of the algebras.    

As in \cref{appOpApprox}, we are interested in two types of state approximation, which can be thought of as state compression and data-driven approximation, respectively:
\begin{enumerate}
    \item Approximation of $\rho$ by a finite-rank density operator $\rho_L \in \mathfrak B_L$; see \cref{appStateCompression}.
    \item Approximation of $\rho_L$ by a data-driven density operator $\rho_{L,N} \in \mathfrak B_{L,N*}$; see \cref{appStateDataApprox}.
\end{enumerate}

\subsubsection{\label{appStateCompression}State compression}

Similarly to \cref{appOpCompression}, for a given density operator $\rho \in \mathfrak B_* $ we define the projected operators $\sigma_L = \bm \Pi_L \rho$. Letting $C_L = \tr \sigma_L$, we have $C_L \leq \tr \rho = 1 $, so in general the $\sigma_L$ are not density operators. Nevertheless, the $\sigma_L$ are positive, finite-rank (and thus trace class)  operators that converge to $\rho$ in trace norm (as opposed to merely strongly; cf.\ \eqref{eqALConv}). Indeed, we have $ \rho - \sigma_L = (I - \Pi_L) \rho (I - \Pi_L) $, so $\rho - \sigma_L$ is positive, and
\begin{displaymath}
    \Vert \rho - \sigma_L \rVert_{\mathfrak B_*} = \tr( \rho-\sigma_L) = \sum_{l=L}^\infty \langle \phi_l, \rho \phi_l \rangle,
\end{displaymath}
where the sum in the right-hand side of the last equality is a positive, decreasing function of $L$, converging to 0 as $L \to \infty$. We also have $C_L = \sum_{l=0}^{L-1} \langle \phi_l, \rho \phi_l \rangle$, which implies that $\lim_{L\to\infty} C_L= 1$, and thus that there exists $L_* \in \mathbb N$ such that $C_L > 0 $ for all $L > L_*$. For any such $L$, $\rho_L := \sigma_L / C_L$ is a density operator, and the sequence $\rho_L$ converges to $\rho$ in trace norm, 
\begin{equation}
    \lim_{L\to\infty} \lVert \rho_L - \rho \rVert_{\mathfrak B_*} =0.
    \label{eqRhoLConv}
\end{equation}
In the main text, we denote the map that sends the normal state $\omega_\rho \in \ssb$ to $\omega_{\rho_L} \in S_*(\mathfrak B_L)$ as $\bm \Pi'_L(\rho) = \rho_L$.

Let now $A$ be an element of $ \mathfrak B$ with corresponding projected elements $A_L = \bm \Pi_L A \in \mathfrak B_L$ from \eqref{eqAL}. By the cyclic property of the trace, we have $\tr (\rho_L A_L) = \tr (\rho_L A)$, and the trace-norm convergence in \eqref{eqRhoLConv} implies $\lim_{L\to\infty} \tr(\rho_L A_L) = \tr(\rho A)$. Equivalently, letting $\omega_\rho \in \ssb$ and $\omega_{\rho_L}$ be the states of $\mathfrak B$ and $\mathfrak B_L$ induced by $\rho$ and $\rho_L$, respectively, we have
\begin{equation}
    \label{eqRhoALConv}
    \lim_{L\to\infty} \omega_{\rho_L} A_L = \omega_\rho A.
\end{equation}
We conclude that evaluation of the projected observables $A_L$ on the projected states $\omega_{\rho_L}$ asymptotically recovers the evaluation of $A$ on $\rho$.

\subsubsection{\label{appStateDataApprox}Data-driven state approximation}

Proceeding analogously to \cref{appOpDataApprox}, we seek data-driven approximations of projected density operators $\rho_L \in \mathfrak B_{L*}$ by density operators $\rho_{L,N} \in \mathfrak B_{L,N*}$ for a subset of density operators $\rho \in \mathfrak B_*$ that behave compatibly with bounded operators on continuous functions. 

First, we recall that every density operator $\rho \in \mathfrak B_*$ admits a decomposition (diagonalization) of the form 
\begin{equation}
    \label{eqRhoDecomp}
    \rho = \sum_{j=0}^\infty r_j \langle \xi_j, \cdot \rangle \xi_j, 
\end{equation}
where $\{ \xi_0, \xi_1, \ldots \} $ is an orthonormal basis of $H$, $(r_0, r_1,\ldots)$ is an $\ell^1$ sequence of real numbers in the interval $[0,1]$, and the sum over $j$ converges in the trace norm of $\mathfrak B_*$. In what follows, we shall restrict attention to a subset $\scb \subset \ssb $, consisting of all normal states  $\omega_\rho$ of $\mathfrak B$ whose corresponding density operators $\rho \in \mathfrak B_*$ are decomposable as in \eqref{eqRhoDecomp} with the following additional requirement: The orthonormal basis vectors $\xi_j$ have uniformly bounded continuous representatives; that is, we have 
\begin{displaymath}
    \xi_j = \iota \tilde \xi_j, \quad \tilde \xi_j \in C(X), \quad \lVert \xi_j \rVert_{C(X)} \leq b, 
\end{displaymath}
for a constant $b$. Given such an $\omega_\rho \in \scb $, for each $N\in \mathbb N$ we define the positive operator  $\hat \sigma_N : \hat H_N \to \hat H_N$, where
\begin{displaymath}
    \hat \sigma_N  = \sum_{j=0}^\infty r_j \langle \hat \xi_{j,N}, \cdot \rangle_N \hat \xi_{j,N}, \quad \hat \xi_{j,N} = \iota_N \tilde \xi_j.
\end{displaymath}
Note that the well-definition of $\hat \sigma_N$ follows from the uniform boundedness of the $\tilde \xi_j$ and the fact that $(r_0, r_1, \ldots)$ is an $\ell^1$ sequence. It should also be kept in mind that, in general, the $\sigma_N$ are not normalized as density operators. We then have:

\begin{lemma}
    \label{lemRho}\  
    \begin{enumerate}[label=(\alph*)]
        \item $\tilde \rho : f \mapsto g$ with $g(x) = \sum_{j=0}^\infty r_j \langle \xi_j, \iota f \rangle \tilde \xi_j(x) $ is well-defined as a linear map from $C(X)$ to itself, and it satisfies $\iota \circ \tilde \rho = \rho \circ \iota$.
        \item For $\mu$-a.e.\ initial condition $x_0 \in X$, the residual $R_N f= (\iota_N \circ \tilde \rho) f - (\hat \sigma_N \circ \iota_N) f$ satisfies 
            \begin{displaymath}
                \lim_{N\to\infty} \lVert R_N f \rVert_{\hat H_N} = 0, \quad \forall f \in C(X).
            \end{displaymath}
    \end{enumerate}
\end{lemma}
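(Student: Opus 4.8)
The plan is to treat (a) as a uniform-convergence statement for the defining series in $C(X)$, and (b) as a term-by-term dominated-convergence argument that upgrades the weak-$^*$ convergence $\mu_N \to \mu$ of~\eqref{eqWeakConv} into norm convergence of the residual in $\hat H_N$. Throughout I write $c_j := \langle \xi_j, \iota f\rangle = \int_X \tilde\xi_j^* f\, d\mu$ for fixed $f \in C(X)$.

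For part (a), I would first observe that $\sum_j \lvert c_j\rvert^2 = \lVert \iota f\rVert_H^2 < \infty$ by Parseval (the $\xi_j$ from~\eqref{eqRhoDecomp} form an orthonormal basis of $H$), while $r_j \in [0,1]$ with $(r_j) \in \ell^1$ also gives $(r_j) \in \ell^2$. Combined with the uniform bound $\lVert \tilde\xi_j\rVert_{C(X)} \le b$ defining $\scb$, the estimate $\lVert r_j c_j \tilde\xi_j\rVert_{C(X)} \le b\, r_j \lvert c_j\rvert$ is summable by Cauchy--Schwarz, so the Weierstrass $M$-test yields absolute and uniform convergence of $\sum_j r_j c_j \tilde\xi_j$ in $C(X)$. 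Hence $g \in C(X)$ and $\tilde\rho$ is bounded and linear, i.e.\ $\tilde\rho \in \mathfrak C$. The intertwining relation then follows by applying $\iota : C(X) \to H$ (which has operator norm $\le 1$ since $\mu$ is a probability measure, and so commutes with the uniformly convergent sum), giving $\iota \tilde\rho f = \sum_j r_j c_j \iota\tilde\xi_j = \sum_j r_j \langle \xi_j, \iota f\rangle \xi_j = \rho(\iota f)$, which is exactly $\iota \circ \tilde\rho = \rho \circ \iota$.

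For part (b), I would pass $\iota_N$ through the same uniformly convergent series (again using $\lVert \iota_N\rVert \le 1$) to obtain $(\iota_N \circ \tilde\rho) f = \sum_j r_j c_j \hat\xi_{j,N}$, and from the definition of $\hat\sigma_N$, $(\hat\sigma_N \circ \iota_N) f = \sum_j r_j c_{j,N}\hat\xi_{j,N}$ with $c_{j,N} := \langle \hat\xi_{j,N}, \iota_N f\rangle_N = \int_X \tilde\xi_j^* f\, d\mu_N$. This produces the clean expression
\begin{displaymath}
    R_N f = \sum_{j=0}^\infty r_j (c_j - c_{j,N}) \hat\xi_{j,N}.
\end{displaymath}
The triangle inequality, together with $\lVert \hat\xi_{j,N}\rVert_{\hat H_N} = \lVert \iota_N \tilde\xi_j\rVert_{\hat H_N} \le \lVert \tilde\xi_j\rVert_{C(X)} \le b$ and the uniform coefficient bound $\lvert c_j - c_{j,N}\rvert \le (1+b)\lVert f\rVert_{C(X)}$, yields the term-by-term majorization $r_j \lvert c_j - c_{j,N}\rvert \lVert \hat\xi_{j,N}\rVert_{\hat H_N} \le C r_j$ for a constant $C$ independent of $j$ and $N$, with $\sum_j C r_j < \infty$.

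To finish, since each $\tilde\xi_j^* f \in C(X)$, the weak-$^*$ convergence in~\eqref{eqWeakConv} gives $c_{j,N} \to c_j$ as $N\to\infty$ for every $j$ and $\mu$-a.e.\ initial condition $x_0$ (the exceptional null set may be taken independent of $j$, since~\eqref{eqWeakConv} holds simultaneously for all continuous test functions). Thus every term of the majorized series tends to $0$, and the discrete dominated convergence theorem with summable dominating sequence $C r_j$ gives $\lVert R_N f\rVert_{\hat H_N} \to 0$. The one genuine obstacle — and the reason the state space must be restricted to $\scb$ — is that the $\hat\xi_{j,N}$ need not be orthonormal in $\hat H_N$ for finite $N$, so $\lVert R_N f\rVert_{\hat H_N}$ cannot be controlled by an $\ell^2$ sum of coefficients; the uniform $C(X)$-bound $b$ on the representatives combined with the $\ell^1$ summability of $(r_j)$ is precisely what supplies the uniform-in-$N$ tail control needed to interchange the limit with the infinite sum.
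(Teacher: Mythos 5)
Your proof is correct and follows essentially the same route as the paper's: the same series decomposition $R_N f = \sum_j r_j\,(\langle \xi_j,\iota f\rangle - \langle\hat\xi_{j,N},\iota_N f\rangle_N)\,\hat\xi_{j,N}$, the same uniform bounds $\lVert\tilde\xi_j\rVert_{C(X)}\le b$ from the definition of $\scb$, $\ell^1$ summability of $(r_j)$ for tail control, and the weak-$^*$ convergence~\eqref{eqWeakConv} for term-by-term convergence off a single null set. Your packaging of part~(a) via the Weierstrass $M$-test and of part~(b) via discrete dominated convergence is just a compact restatement of the paper's explicit finite-sum-plus-tail $\epsilon$-arguments, and your closing remark on why restriction to $\scb$ is needed matches the role it plays in the paper.
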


\begin{proof}
    See \cref{appProofRhoLemma}.
\end{proof}

It follows from \cref{lemRho} that \eqref{eqIota} and~\eqref{eqIotaN} hold with $A = \rho$, $\tilde A = \tilde \rho$, and $\hat A_N = \hat \sigma_N$. Thus, \cref{lemApprox} and \cref{corApprox} apply, and for each $L\in \mathbb N$ such that $\lambda_{L-1} > 0$, the matrix representations $\bm \sigma_{L,N} = [\langle \phi_{i,N}, \hat \sigma_N \phi_{j,N} \rangle_N]_{i,j=0}^{L-1}$ of $\sigma_{L,N} = \Pi_{L,N}\hat\sigma_N$ converge to the matrix representation $\bm \sigma_L = [\langle \phi_i, \rho \phi_j \rangle ]_{i,j=0}^{L-1}$ of $\sigma_L = \bm \Pi_L \rho$. If, in addition, $L$ is sufficiently large such that $C_L > 0 $, then the density operators $\rho_{L,N} \in \mathfrak B_{L,N*}$ defined as $\rho_{L,N} = \sigma_{L,N} / C_{L,N} $ with $C_{L,N} = \tr \sigma_{L,N}$ converge, as $N\to\infty$, in the sense of convergence of the corresponding matrix representations $\bm \rho_{L,N} = \bm \sigma_{L,N} / C_{L,N}$, to the density operator $\rho_L = \sigma_L / C_L $ with matrix representation $\bm \rho_L = \bm \sigma_L / C_L$. As we saw in \cref{appStateCompression}, the latter converges to $\rho$ as $L\to\infty$ in the trace norm. 

Combining the results of this section with those of \cref{appOpApprox}, we conclude that QMDA consistently approximates the action of normal states $\omega_\rho \in \scb$ on elements $A \in \mathfrak B$ satisfying \eqref{eqIota} and~\eqref{eqIotaN} by the action of the data-driven states $\omega_{\rho_{L,N}} \in S(\mathfrak B_{L,N})$ on the data-driven elements $A_{L,N} \in \mathfrak B_{L,N}$  in the sense of the iterated limit
\begin{equation}
    \label{eqRhoLALConv}
    \lim_{L\to\infty} \lim_{N\to\infty} \omega_{\rho_{L,N}} A_{L,N} = \lim_{L\to\infty} \omega_{\rho_L} A_L = \omega_\rho A, 
\end{equation}
where the first equality holds for $\mu$-a.e.\ initial condition $x_0 \in X$.

\subsubsection{\label{appProofRhoLemma}Proof of \cref{lemRho}}

\paragraph*{(a)} Fix $x \in X$ and $\epsilon > 0 $. For any $x' \in X$ and $J \in \mathbb N$, we have 
\begin{displaymath}
    \left\lvert \sum_{j=J}^\infty r_j \langle \xi_j, f \rangle \left(\tilde \xi_j(x) - \tilde \xi_j(x')\right) \right\rvert \leq \lVert f \rVert_{C(X)}\sum_{j=J}^\infty r_j \left\lvert \tilde\xi_j(x) - \tilde \xi_j(x') \right\rvert \leq 2 b \lVert f \rVert_{C(X)} \sum_{j=J}^\infty r_j.     
\end{displaymath}
Thus, since $(r_0, r_1, \ldots ) \in \ell^1$, there exists $J$ such that $\left\lvert \sum_{j=J}^\infty r_j \langle \xi_j, f \rangle (\tilde \xi_j(x) - \tilde \xi_j(x')) \right\rvert < \epsilon$, for all $ x' \in X$. We therefore have 
\begin{displaymath}
    \lvert g(x) - g(x') \rvert = \left\lvert \sum_{j=0}^\infty r_j \langle \xi_j, f \rangle \left(\tilde \xi_j(x) - \tilde \xi_j(x')\right) \right\rvert \leq \left\lvert \sum_{j=0}^{J-1} r_j \langle \xi_j, f \rangle \left(\tilde \xi_j(x) - \tilde \xi_j(x')\right) \right\rvert + \epsilon,
\end{displaymath}
and the continuity of $g$ follows from the fact that the first term in the right-hand side of the last inequality is a finite sum of continuous functions. The boundedness of $g$ can be shown similarly. The relation $\iota \circ \tilde \rho = \rho \circ \iota$ follows directly from the definitions of $\rho$ and $\tilde\rho$. 

\paragraph*{(b)} Given $f \in C(X)$, we have
\begin{align*}
    \lVert R_N f \rVert_{\hat H_N} & = \left \lVert (\iota_N \circ \tilde \rho) f - (\hat \sigma_N \circ \iota_N) f \right \rVert_{\hat H_N} = \left \lVert  \sum_{j=0}^\infty r_j \left( \langle \xi_j, \iota f \rangle - \langle \hat \xi_{j,N}, \iota_N f \rangle_N \right)\hat \xi_{j,N} \right \rVert_{\hat H_N} \\
    &\leq \sum_{j=0}^\infty r_j \left \lvert  \langle \xi_j, \iota f \rangle - \langle \hat \xi_{j,N}, \iota_N f \rangle_N \right\rvert b.
\end{align*}
Since $\left \lvert  \langle \xi_j, \iota f \rangle - \langle \hat \xi_{j,N}, \iota_N f \rangle_N \right\rvert \leq 2 b \lVert f \rVert_{C(X)}$ and $(r_0, r_1, \ldots) \in \ell^1$, for every $\epsilon >0 $ there exists $J \in \mathbb N$ such that 
\begin{displaymath}
    \sum_{j=J}^\infty r_j \left \lvert  \langle \xi_j, \iota f \rangle - \langle \hat \xi_{j,N}, \iota_N f \rangle_N \right\rvert b < \epsilon/2. 
\end{displaymath}
Moreover, by \eqref{eqWeakConv}, for $\mu$-a.e.\ $x_0 \in X$ and every $j \in \mathbb N$ we have 
\begin{displaymath}
    \lim_{N\to\infty}\left \lvert  \langle \xi_j, \iota f \rangle - \langle \hat \xi_{j,N}, \iota_N f \rangle_N \right\rvert = 0, 
\end{displaymath}
so there exists $N_* \in \mathbb N$ such that
\begin{displaymath}
    \sum_{j=0}^{J-1} r_j \left \lvert  \langle \xi_j, \iota f \rangle - \langle \hat \xi_{j,N}, \iota_N f \rangle_N \right\rvert < \epsilon /2, \quad \forall N > N_*. 
\end{displaymath}
We thus have 
\begin{displaymath}
    \lVert R_N f \rVert_{\hat H_N} \leq \sum_{j=0}^{J-1} r_j \left \lvert  \langle \xi_j, \iota f \rangle - \langle \hat \xi_{j,N}, \iota_N f \rangle_N \right\rvert + \sum_{j=J}^{\infty} r_j \left \lvert  \langle \xi_j, \iota f \rangle - \langle \hat \xi_{j,N}, \iota_N f \rangle_N \right\rvert <  \epsilon  
\end{displaymath}
for all $ N > N_*$, and since $\epsilon$ was arbitrary, we conclude that $\lim_{N\to\infty} \lVert R_N f \rVert_{\hat H_N} = 0$. This completes the proof of \cref{lemRho}.

\subsection{\label{appObs}Approximation of the forecast observable and its spectral measure}

In this subsection, we examine the QMDA representation of the forecast observable $f \in \mathfrak A$ by projected multiplication operators in $\mathfrak B_{L,N}$ which we denote, as in the main text, by $\pi_{L,N} f$. We are interested in two types of asymptotic consistency of our representations, respectively described in \cref{appPointwiseApprox} and \cref{appSpectralApprox}:
\begin{enumerate}
    \item Pointwise consistency, meaning that evaluation of $\pi_{L,N} f$ on the states $\omega_{\rho_{L,N}}$ from \cref{appState} should converge to evaluation of the multiplication operator $\pi f \in \mathfrak B$ on the state $\omega_{\rho}$ approximated by $\omega_{\rho_{L,N}}$.
    \item Spectral consistency, meaning that the spectral measures of $\pi_{L,N}f$ should converge to the spectral measure of $\pi f$ in a suitable sense. 
\end{enumerate}
In QMDA applications, pointwise consistency is required for consistency of the forecast mean and variance with the theoretical forecast mean and variance, respectively, from the infinite-dimensional data assimilation system based on the algebra $\mathfrak B$ (i.e., the quantum mechanical level \Lq\ in \cref{figSchematic}). Meanwhile, spectral consistency is required for consistency of the corresponding forecast probabilities (denoted as $\mathbb P_{f,t,\tau}$ in the main text). 

\subsubsection{\label{appPointwiseApprox}Pointwise approximation and its consistency}

For a given trajectory $X_N := \{x_0, \ldots, x_{N-1} \} \subset X$, let $\hat{\mathfrak A}_N$ denote the finite-dimensional, abelian von-Neumann algebra of complex-valued functions on $X_N$ with respect to pointwise function multiplication and complex conjugation, equipped with the maximum norm, $\lVert u \rVert_{\hat{\mathfrak A}_N} = \max_{x_n \in X_N} \lvert u(x_n) \rvert$. As a vector space, $\hat{\mathfrak A}_N$ is isomorphic to the Hilbert space $\hat H_N$, but the two spaces have different norms. Every function $ f:  X \to \mathbb C$ induces an element $\hat f_N \in \hat{\mathfrak A}_N$ by restriction to $X_N$, $\hat f_N(x_n) = f(x_n)$ for all $n\in \{0, \ldots,N-1 \}$. Reusing notation, we will denote the linear map that maps $f$ to $\hat f_N$ by $\iota_N$. Analogously to $\pi : \mathfrak A \to \mathfrak B$, the algebra $\hat{\mathfrak A}_N$ has a regular representation $\hat \pi_N : \hat{\mathfrak A}_N \to \hat{\mathfrak B}_N$ such that, given $u\in \hat{\mathfrak A}$, $\hat \pi_N u$ is the multiplication operator by $u$, i.e., $(\hat \pi_N u) v  = uv $ for all $v \in \hat{\mathfrak A}_N$. Moreover, similarly to $\pi_L : \mathfrak A \to \mathfrak B_L$, for each $L \in \{ 1, \ldots, N\}$ we define the linear map $\pi_{L,N} : \hat{\mathfrak A}_N \to \mathfrak B_{L,N}$, $\pi_{L,N} = \bm \Pi_{L,N} \circ \hat \pi_N$, which maps elements of $\hat{\mathfrak A}_N$ to projected multiplication operators in $\mathfrak B_{L,N}$. Note that, in general, neither $\pi_L$ nor $\pi_{L,N}$ are algebra homomorphisms.     

Consider now the $C^*$-algebra of continuous functions on $X$, $C(X)$, and its regular representation $\tilde \pi: C(X) \to \mathfrak C$, where $\tilde \pi \tilde f$ is the multiplication operator by $\tilde f$, i.e., $(\tilde \pi \tilde f) \tilde g = \tilde f \tilde g$ for all $ \tilde g \in C(X)$. One readily verifies that for every $ \tilde f \in C(X)$, 
\begin{equation}
    \label{eqPiF}
    \iota \circ (\tilde \pi \tilde f) = (\pi f) \circ \iota, \quad \iota_N \circ (\tilde \pi f) = (\hat \pi_N \hat f_N) \circ \iota_N,  
\end{equation}
where $f = \iota \tilde f \in \mathfrak A$ and $\hat f_N = \iota_N \tilde f \in \hat{\mathfrak A}_N$. As a result, \eqref{eqIota} and~\eqref{eqIotaN} hold for $ A = \pi f$, $\tilde A = \tilde \pi \tilde f$, and $\hat A_N = \hat \pi_N \hat f_N$, and by \cref{lemApprox} and \cref{corApprox} we can consistently approximate $\pi f$ by the projected multiplication operators $\pi_L f$ and $\pi_{L,N} \hat f_N$. 

In the main text, $\pi_L f$ and $\pi_{L,N}\hat f_N$ were used to represent the forecast observable $f$ in the matrix mechanical and data-driven formulations of QMDA, respectively. Under \cref{assump2}, \eqref{eqRhoLALConv} and~\eqref{eqPiF} together lead to the following consistency result for these representations,
\begin{displaymath}
    \lim_{L\to\infty}\lim_{N\to\infty} \omega_{\rho_{L,N}} (\pi_{L,N} \hat f_N) = \lim_{L\to\infty} \omega_{\rho_L} (\pi_L f) = \omega_\rho (\pi f),
\end{displaymath}
which holds for $\mu$-a.e.\ initial condition $x_0 \in X$.

As with other linear maps employed in QMDA, in numerical applications we employ the $L \times L$ matrix representation of $\pi_{L,N} \hat f_N$, given by $\bm A_{L,N} = [\langle \phi_{i,N}, (\pi_{L,N} \hat f_N ) \phi_{j,N} \rangle_N ]_{i,j=0}^{L-1}$. \cref{algObservable} describes the computation of this matrix (as well as the spectral measure of $\pi_{L,N} \hat f_N$, which we discuss in \cref{appSpectralApprox} below). By \cref{corApprox}, for $\mu$-a.e.\ initial condition $x_0$, $\bm A_{L,N}$ converges as $N\to\infty$ to the matrix representation $\bm A_L = [\langle \phi_i, \pi_L f \phi_j\rangle]_{i,j=0}^{L-1}$ of $\pi_L f$. 

\begin{algorithm}
    \caption{\label{algObservable}Projected multiplication operator representing the forecast observable $f$ and evaluation of the associated spectral measure. We suppress $L$ and $N$ subscripts from our notation of $\bm A_{L,N}$ and $\bm E_{L,N,}(S_m)$.}
    \Inputs
    \begin{enumerate}
        \item Training observable values $f_0, \ldots, f_{N-1} \in \mathbb R$.
        \item Basis vectors $\bm \phi_0, \ldots, \bm \phi_{L-1}$ from \cref{algBasis}.

        \item Intervals (spectral bins) $ S_0, \ldots, S_{M-1} \subseteq \mathbb R$.
    \end{enumerate}
    \Require The training data $z_n$ used in the computation of $\bm \phi_l$ are induced by the same dynamical states $x_n \in X$ underlying $f_n$, i.e., $z_n = z(x_n)$ and $f_n = f(x_n)$.  
    \medskip

    \Outputs
    \begin{enumerate}
        \item $L\times L$ matrix $\bm A$ representing the projected multiplication operator $\pi_{L,N} f $ in the $\phi_{l,N}$ basis of $H_{L,N}$. 
        \item $L\times L$ projection matrices $\bm E_0, \ldots, \bm E_{M-1}$, where $\bm E_m$ is the matrix representation of the spectral projector $E_{\pi_{L,N}\hat f_N}(S_m)$ in the $\{\phi_{l,N}\}$ basis of $H_{L,N}$.   
    \end{enumerate}

    \Steps
    \begin{enumerate}
        \item \Return $ \bm A = [A_{ij}]_{i,j=0}^{L-1} $, where $A_{ij} = \bm \phi_i^\top (\bm f \odot \bm \phi_j) /N  $, $ \bm f = ( f_0, \ldots, f_{N-1})^\top$, and $\odot$ denotes elementwise multiplication of column vectors. 
        \item Compute the eigendecomposition $\bm A \bm u_j = a_j \bm u_j$, where $a_0, \ldots, a_L \in \mathbb R$ and the eigenvectors $\bm u_0, \ldots, \bm u_{L-1} \in \mathbb R^L$ satisfy $ \bm u_i^\top \bm u_j = \delta_{ij} $. 
        \item \Return The projection matrices $\bm E_0, \ldots, \bm E_{M-1}$, where $\bm E_m = \sum_{a_j \in S_m} \bm u_j \bm u_j^\top $.
    \end{enumerate}
\end{algorithm}

\subsubsection{\label{appSpectralApprox}Spectral approximation}

We are interested in approximating the spectral measure of the multiplication operator $\pi f \in \mathfrak B$ associated with the forecast observable $f$ by the spectral measures of the finite-rank operators $ \pi_L f \in \mathfrak B_L$ and $ \pi_{L,N} \hat f_N \in \mathfrak B_{L,N} $. 

First, we recall that the spectrum of an element $a$ of a unital $C^*$-algebra is the set of complex numbers $z$ such that $a-z$ does not have an inverse. We denote this set as $\sigma(a)$. In the case of a finite-dimensional operator algebra such as $\mathfrak B_L$, and $\mathfrak B_{L,N}$, the spectrum of any element $a$ is a finite set consisting of the eigenvalues of $a$. In the case of the infinite-dimensional operator algebra $\mathfrak B$, the spectrum of a multiplication operator $\pi f$ by an element $f \in \mathfrak A$ is equal to the essential range of $f$, i.e., the support of the pushforward measure $f_*(\mu) : \mathcal B(\mathbb C) \to \mathbb C $ on the Borel $\sigma$-algebra on $\mathcal B(\mathbb C)$ on $\mathbb C$. Note that $\sigma(\pi f)$  coincides with the spectrum of $f$ as an element of the abelian algebra $\mathfrak A$, defined as the set of complex numbers for which $f - z$ is non-invertible.  

The point spectrum of $\pi f$, i.e., the set of its eigenvalues, consists of all elements $z\in \sigma(\pi f)$ such that the preimage $ S= f^{-1}(\{z\}) \subseteq X$ has positive measure, $\mu(S)>0$. We denote the point spectrum of $\pi f$ by $\sigma_p(\pi f)$. Points in the complement of $\sigma_p(\pi f)$ in $\sigma(\pi f)$ lie in the continuous spectrum of $\pi f$, and have no associated eigenspaces. A challenge with spectral approximation of multiplication operators on infinite-dimensional operator algebras is that generically they have a non-empty continuous spectrum, whereas the continuous spectrum of any finite-rank approximation of these operators is necessarily empty.     

Suppose now that $f \in \mathfrak A$ is real-valued as per \cref{assump1}(c), i.e., $f$ is a self-adjoint element of the abelian algebra $\mathfrak A$. Then, $\pi f \in \mathfrak B$ is a self-adjoint operator, and the spectrum $\sigma(\pi f) $ is a subset of the real line. By the spectral theorem for self-adjoint operators, there is a unique projection-valued measure (PVM) $E : \mathcal B(\mathbb R) \to \mathfrak B$, giving $\pi f$ through the spectral integral $\pi f = \int_{\mathbb R} u \, dE(u)$. By construction, the operators $\pi_L f \in \mathfrak B_L$ and $\pi_{L,N} \hat f_N$ are also self-adjoint whenever $f$ is real-valued, and thus have associated PVMs $E_L: \mathcal B(\mathbb R) \to \mathfrak B_L$ and $E_{L,N}: \mathcal B(\mathbb R) \to \mathfrak B_{L,N}$, respectively, such that $ \pi_L f = \int_{\mathbb R} u \, dE_L(u)$ and $\pi_{L,N} \hat f_N = \int_{\mathbb R} u \, dE_{L,N}(u)$. Since the spectra $\sigma(\pi_L f)$ and $\sigma(\pi_{L,N} f)$ are finite sets, these spectral measures are discrete. Explicitly, for a given Borel set $S \in \mathcal B(\mathbb R)$, we have 
\begin{displaymath}
    E_L(S) = \sum_{j: a_{L,j} \in S} E_{L,j}, \quad E_{L,N}(S) = \sum_{j: a_{L,N,j} \in S} E_{L,N,j}, 
\end{displaymath}
where $\{ a_{L,j} \} = \sigma(\pi_L f)$ and $\{ a_{L,N,j} \} = \sigma(\pi_{L,N} f)$ are the sets of eigenvalues of $\pi_L f$ and $\pi_{L,N} f$ (without multiplicities), and $E_{L,j} \in \mathfrak B_L$ and $E_{L,N,j} \in \mathfrak B_{L,N}$ are the orthogonal projections onto the corresponding eigenspaces, respectively.

In what follows, we characterize the convergence of $E_{L,N}$ to $E_L$ as $N\to\infty$ (large data limit) and $E_L$ to $E$ as $L\to\infty$ (infinite-dimension limit).  

\paragraph*{Spectral convergence in the large-data limit.} Let $\bm A_L = [\langle \phi_i, (\pi_L f ) \phi_j \rangle ]_{i,j=0}^{L-1}$ and $\bm A_{L,N} = [\langle \phi_{i,N}, (\pi_{L,N} \hat f_N ) \phi_{j,N} \rangle_N ]_{i,j=0}^{L-1}$ be the matrix representations of $\pi_L f$ and $\pi_{L,N}f$ in the $\{ \phi_l\}$ and $\{\phi_{l,N} \}$ bases of $H_L$ and $H_{L,N}$, respectively. In the same bases, the spectral measures $E_L$ and $E_{L,N}$ are represented by matrix-valued measures $\bm E_L : \mathcal B(\mathbb R) \to \mathbb M_L$ and  $\bm E_{L,N} : \mathcal B(\mathbb R) \to \mathbb M_L$ such that $\bm E_L(S) = [\langle \phi_i, E_L(S) \phi_j \rangle ]_{i,j=0}^{L-1}$ and $\bm E_{L,N}(S) = [\langle \phi_{i,N}, E_{L,N}(S) \phi_{j,N} \rangle_N]_{i,j=0}^{L-1} $ are the matrix representations of the projections $E_L(S)$ and $E_{L,N}(S)$ respectively. Since $\bm A_{L,N}$ converges to $\bm A_L$, it follows from spectral approximation results for finite-rank operators \cite{Chatelin11} that if the boundary of $S$ does not contain any eigenvalues of $\pi_L f$, then for $\mu$-a.e.\ initial condition $x_0$ and Borel set $S \in \mathcal B(\mathbb R)$, $ \bm E_{L,N}(S)$ converges as $N\to\infty$ to $\bm E_L(S)$. 

\cref{algObservable} describes the computation of the spectral projectors $\bm E_{L,N}(S_m)$ on a set of pairwise-disjoint intervals (``spectral bins'') $S_0, \ldots, S_{M-1} \subseteq \mathbb R$ partitioning the range of $\hat f_N$ in the training data. We choose the intervals $S_m$ such that they carry equal probability mass under the distribution of $\hat f_N$ with respect to the sampling measure $\mu_N$. With this choice, the boundaries of the $S_m$ can be computed from the values of the quantile function of $\hat f_N$ on a uniform partition of $[0,1]$; see \cref{algSpecBins} for further details.    

\begin{algorithm}
    \caption{\label{algSpecBins}Spectral bins for the forecast observable $f$ from the empirical quantile function.}
    \Inputs
    \begin{enumerate}
        \item Training observable values $f_0, \ldots, f_{N-1} \in \mathbb R$.
        \item Number of spectral bins $M \in \mathbb N$.
    \end{enumerate}
    \Outputs
    \begin{enumerate}
        \item Intervals (spectral bins) $S_0, \ldots, S_{M-1} \subseteq \mathbb R$.
    \end{enumerate}
    \Steps
    \begin{enumerate}
        \item Compute the empirical quantile function of $f$, $Q_f : (0, 1) \to (0, \infty) $, associated with the samples $f_n$.  
        \item Define $b_1, \ldots, b_{M-1} \in \mathbb R $ with $b_m = m / M$. 
        \item \Return The intervals $S_0, \ldots, S_{M-1}$, where 
            \begin{displaymath}
                S_m = 
                \begin{cases}
                    (-\infty, Q_f(b_1)], & m = 0, \\
                    (Q_f(b_m), Q_f(b_{m+1})], & 1 < m < M -1, \\
                    (Q_f(b_{M-1}), \infty), & m = M - 1.
                \end{cases}
            \end{displaymath}
    \end{enumerate}
\end{algorithm}

\paragraph*{Spectral convergence in the infinite-dimension limit} We employ the following results on spectral approximation of self-adjoint operators. 

\begin{theorem}
    \label{thmSpec}
    Let $A_L : H \to H$ be a sequence of finite-rank, self-adjoint operators on a Hilbert space $H$ converging strongly as $L\to\infty$ to a self-adjoint operator $A: H \to H $. Let $E: \mathcal B(\mathbb R) \to B(H)$ and $E_L : \mathcal B(\mathbb R) \to B(H)$ be the spectral measures of $A$ and $A_L$, respectively. Then, the following hold. 
    \begin{enumerate}[label=(\alph*)]
        \item For every element $a$ of the spectrum of $A$, there exists a sequence $a_L$ of eigenvalues of $A_L$ such that $\lim_{L\to\infty} a_L = a $. 
        \item For every Borel set $S \in \mathcal B(\mathbb R)$ such that $E(\partial S) = 0$ (i.e., the boundary of $S$ does not contain any eigenvalues of $A$), the spectral projections $E_L(S)$ converge to $E(S)$ in the strong operator topology of $B(H)$, i.e., 
            \begin{displaymath}
                \lim_{L\to\infty} E_L(S) f = E(S) f, \quad \forall f \in H.
            \end{displaymath}
    \end{enumerate}
\end{theorem}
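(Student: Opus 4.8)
The plan is to prove both parts by first upgrading the strong convergence $A_L \to A$ to convergence of the continuous functional calculus, and then reading off spectral information. As a preliminary, since $A_L f$ converges for every $f \in H$, the uniform boundedness principle gives $M := \sup_L \lVert A_L \rVert_{B(H)} < \infty$, so $A$ is bounded and self-adjoint with $\sigma(A) \cup \bigcup_L \sigma(A_L) \subseteq [-M, M]$. The key lemma I would establish is that $g(A_L) \to g(A)$ strongly for every $g \in C([-M,M])$. First I would verify this for polynomials: because multiplication of uniformly bounded operators is jointly strongly continuous, $A_L^k \to A^k$ strongly for each $k$, hence $p(A_L) \to p(A)$ strongly for every polynomial $p$. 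Then, given continuous $g$ and $f \in H$, I would approximate $g$ uniformly on $[-M,M]$ by a polynomial $p$ (Weierstrass) and bound $\lVert g(A_L) f - g(A) f \rVert_H$ by $\lVert (g-p)(A_L) f \rVert_H + \lVert p(A_L) f - p(A) f \rVert_H + \lVert (p-g)(A) f \rVert_H$, where the outer terms are at most $\lVert g - p \rVert_\infty \lVert f \rVert_H$ and the middle term vanishes as $L \to \infty$.

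For part (a), I would fix $a \in \sigma(A)$ and $\epsilon > 0$. Since the spectral projection $E((a-\epsilon, a+\epsilon))$ is nonzero, I can choose a unit vector $f$ in its range, for which $\lVert (A - a) f \rVert_H \leq \epsilon$. Strong convergence gives $(A_L - a) f \to (A - a) f$, so $\lVert (A_L - a) f \rVert_H \leq 2\epsilon$ for large $L$. Because $A_L$ is self-adjoint, $\operatorname{dist}(a, \sigma(A_L)) = \inf_{\lVert g \rVert_H = 1} \lVert (A_L - a) g \rVert_H \leq 2\epsilon$, and since $\epsilon$ is arbitrary this yields $\operatorname{dist}(a, \sigma(A_L)) \to 0$. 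As each $A_L$ is finite-rank, its spectrum consists of eigenvalues, so defining $a_L$ to be the eigenvalue of $A_L$ nearest to $a$ produces the required sequence $a_L \to a$.

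For part (b), I would pass to the scalar spectral measures. For fixed $f \in H$, set $\mu_f^L(S) = \langle f, E_L(S) f \rangle$ and $\mu_f(S) = \langle f, E(S) f \rangle$; these are finite positive Borel measures of total mass $\lVert f \rVert_H^2$ supported in $[-M,M]$. The functional-calculus lemma gives $\int g \, d\mu_f^L = \langle f, g(A_L) f \rangle \to \langle f, g(A) f \rangle = \int g \, d\mu_f$ for every continuous $g$, i.e.\ $\mu_f^L \to \mu_f$ weakly. By the Portmanteau theorem, weak convergence implies $\mu_f^L(S) \to \mu_f(S)$ for every Borel set $S$ with $\mu_f(\partial S) = 0$; the hypothesis $E(\partial S) = 0$ guarantees $\mu_f(\partial S) = \langle f, E(\partial S) f \rangle = 0$ for all $f$. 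Polarizing then yields weak-operator convergence $\langle f, E_L(S) g \rangle \to \langle f, E(S) g \rangle$ for all $f, g \in H$.

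Finally I would upgrade weak to strong convergence using that $E_L(S)$ and $E(S)$ are orthogonal projections. Expanding,
\[
    \lVert E_L(S) f - E(S) f \rVert_H^2 = \langle f, E_L(S) f \rangle - 2\operatorname{Re}\langle f, E_L(S) E(S) f \rangle + \lVert E(S) f \rVert_H^2,
\]
where I used $E_L(S)^* E_L(S) = E_L(S)$ in the first term. That first term converges to $\lVert E(S) f \rVert_H^2$ by the measure convergence just established, while the middle term converges to $\langle f, E(S) f \rangle = \lVert E(S) f \rVert_H^2$ by applying weak-operator convergence to the fixed vector $E(S) f$. Hence the whole expression tends to $0$, giving $E_L(S) f \to E(S) f$ in norm. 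I expect the main obstacle to be exactly this last upgrade together with the Portmanteau step: weak-operator convergence of projections is straightforward but does \emph{not} in general imply strong convergence, and it is the special idempotent, self-adjoint structure of projections (so that $\lVert E_L(S) f \rVert_H^2 = \langle f, E_L(S) f \rangle$ converges to the correct limit) that makes the argument close.
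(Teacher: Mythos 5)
Your proof is correct, and it takes a genuinely different route from the paper. The paper disposes of the theorem entirely by citation: for part (a) it invokes the chain ``strong convergence of bounded self-adjoint operators implies strong resolvent convergence'' (de Oliveira, Proposition~10.1.13(a)) followed by ``strong resolvent convergence implies spectral inclusion'' (de Oliveira, Corollary~10.2.2), and for part (b) it cites Proposition~13(iii) of Das--Giannakis--Slawinska, which proves the analogous statement for skew-adjoint operators under strong resolvent convergence. You instead build everything from scratch: a uniform-boundedness argument plus Weierstrass approximation to get strong convergence of the continuous functional calculus $g(A_L) \to g(A)$; the distance-to-spectrum characterization $\operatorname{dist}(a,\sigma(A_L)) = \inf_{\lVert g \rVert = 1}\lVert (A_L - a)g \rVert$ for part (a); and scalar spectral measures, the Portmanteau theorem, and polarization for part (b), with the final weak-to-strong upgrade exploiting idempotence and self-adjointness of the projections $E_L(S)$. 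Your instinct about where the real content lies is accurate---weak operator convergence of projections does not in general imply strong convergence, and the identity $\lVert E_L(S)f \rVert_H^2 = \langle f, E_L(S) f\rangle$ is exactly what closes the gap; this is also the step for which the paper has to reach outside standard textbooks to ref.~\cite{DasEtAl21}. What each approach buys: yours is self-contained, elementary, and makes the hypothesis $E(\partial S) = 0$ transparently enter through the continuity-set condition of Portmanteau; the paper's is far shorter and situates the result within the strong-resolvent-convergence framework, which has the advantage of extending to unbounded (e.g., skew-adjoint generator) settings that the authors use elsewhere. Note also that your preliminary observation (uniform boundedness of the $A_L$, hence a common compact interval $[-M,M]$ containing all spectra) is what lets you stay within the bounded functional calculus and avoid resolvents altogether; this is legitimate here precisely because the theorem is stated for everywhere-defined, hence bounded, self-adjoint operators.
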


\begin{proof}
    Strong convergence of bounded self-adjoint operators implies convergence in the strong resolvent sense; see ref.~\cite{Oliveira09}, Proposition~10.1.13(a). Strong resolvent convergence of operators implies in turn spectral convergence as stated in Part~(a); see ref.~\cite{Oliveira09}, Corollary 10.2.2. For Part~(b), see  ref.~\cite{DasEtAl21}, Proposition~13(iii), which states the analogous result under strong resolvent convergence of skew-adjoint operators. 
\end{proof}

Since the projected multiplication operators $\pi_L f$ converge to $ \pi f$ strongly and they are self-adjoint whenever $f$ is real-valued, it follows from \cref{thmSpec} with $A = \pi f$ and $A_L = \pi_L f $ that the spectra and spectral measures of $\pi_L f$ converge to those of $\pi f$ in the sense stated in the theorem. 

\subsection{\label{appKoopman}Koopman operator approximation}

Let $U \equiv U^{\Delta t}$ be the unitary Koopman operator on $H$ associated with the the temporal sampling interval $\Delta t$ of the data. Following refs.~\cite{BerryEtAl15,GiannakisEtAl15,Giannakis19,Giannakis19b,DasEtAl21}, we approximate $U$ by a shift operator $\hat U_N : \hat H_N \to \hat H_N$. Here, we define $\hat U_N$ as  
\begin{equation}
    \label{eqShiftOp}
    \hat U_N f(x_n) = 
    \begin{cases}
        f(x_{n+1}), & 0 \leq n \leq N- 2, \\
        f(x_0), & n = N-1.
    \end{cases}
\end{equation}
With this definition, $\hat U_N$ is a unitary operator acting as a left circular shift on the sequence of values $f(x_0), \ldots, f(x_{N-1})$.

Next, let $\tilde U : C(X) \to C(X)$ be the time-$\Delta t$ Koopman operator on continuous (as opposed to $L^2$) functions, defined as usual by composition with the time-$\Delta t $ flow, $\tilde U = f \circ \Phi^{\Delta t}$. Since $\Phi^{\Delta t} $ is $\mu$-preserving, we have
\begin{displaymath}
    U \circ \iota = \iota \circ \tilde U, 
\end{displaymath}
so \eqref{eqIota} is satisfied for $A = U$ and $\tilde A = \tilde U$. Moreover, one verifies that~\eqref{eqIotaN} is satisfied for $\hat A_N = \hat U_N$ and $\tilde A = \tilde U$ by observing that for every $f \in C(X)$ the vectors $ g = \hat U_N \circ \iota_N f$ and $g' = \iota_N \circ \tilde U f$ differ only in the $(N-1)$-th component; i.e., $g(x_n) = g'(x_n)$ for all $n \in \{ 0, \ldots, N-2\}$. This implies that the residual $R_N f = g - g'$ has norm 
\begin{displaymath}
    \lVert R_N f \rVert_{\hat H_N} \leq \lVert f \rVert_{C(X)} / \sqrt N, 
\end{displaymath}
which verifies~\eqref{eqIotaN}. It therefore follows that \cref{lemApprox} and \cref{corApprox} apply for $U$, $\hat U_N$, and $\tilde U$. Thus, for any $\mu$-a.e.\ initial condition $x_0$ and each $L \in \mathbb N$ such that $\lambda_{L-1} > 0$, the data-driven shift operator matrices, $\bm U_{L,N} = [ \langle \phi_{i,N}, \hat U_N \phi_{j,N} \rangle_N]_{i,j=0}^{L-1} $, converge as $N\to\infty$ to the projected time-$\Delta t $ Koopman operator matrices, $\bm U_L = [\langle \phi_i, U \phi_j \rangle]_{i,j=0}^{L-1}$. The projected Koopman operators $U_L$ converge in turn as $L\to \infty$ to $U$ in the strong operator topology of $\mathfrak B$, as described in \cref{appOpCompression}.  

To approximate the Koopman operator $U^t$ at time $ t = t_q := q\,\Delta t$ with $q \in \mathbb Z$, we repeat the construction described above using the $q$-th power of the shift operator, $\hat U^q_N $, as the approximating operator on $\hat H_N$, which is equivalent to a circular shift by $q$ steps. This leads to projected operators $U^{(q)}_{L,N} = \bm \Pi_{L,N} \hat U^q_N$ whose matrix representations $\bm U^{(q)}_{L,N}$ converge as $N\to \infty$ to the matrix representation of $U^{(t_q)}_L = \bm \Pi_L U^{t_q} $. As $L\to \infty$, $U^{(t_q)}_L$ converges strongly to $U^{t_q}$ (see \eqref{eqALConv}). Thus, we obtain an asymptotically consistent approximation of the dynamical operators employed in QMDA for any given (finite) time horizon $t_q = q\,\Delta t$. 

The construction of the $\bm U^{(q)}_{L,N}$ matrices is described in \cref{algKoopman}. It is important to note that acting with $\hat U_N^q$ on elements of $\hat H_N$ does not require explicit knowledge of the states $x_n$. It should also be kept in mind that, unless $H_{L,N}$ is a $\hat U_N$-invariant subspace of $\hat H_N$, $U^{(q)}_{L,N}$ is not a unitary operator, and it is not equal to the $q$-th power of $U_{L,N}$. Nevertheless, by unitarity of $\hat U^q_N$ we have $U^{(q)*}_{L,N} = U^{(-q)}_{L,N}$. Similarly, $U^{(t)}_L$ is in general not equal to $(U_L)^{t/\Delta t}$, it is not unitary, but it satisfies $U^{(t)*}_L = U^{(-t)}_L$.

\begin{algorithm}
    \caption{\label{algKoopman}Koopman operator approximation. We suppress $L$ and $N$ indices from our notation of $\bm U^{(q)}_{L,N}$.}
    \Inputs
    \begin{enumerate}
        \item Basis vectors $\bm \phi_0, \ldots, \bm \phi_{L-1}$ from \cref{algBasis}.

        \item Time-shift parameter $q \in \mathbb Z$.
    \end{enumerate}
    \Require The underlying training data $z_0, \ldots, z_{N-1}$ are time-ordered and are taken with a uniform sampling interval $\Delta t > 0$.
    \medskip

    \Outputs
    \begin{enumerate}
        \item $L\times L $ matrix $\bm U^{(q)}$ representing the projected Koopman operator on $U^{q\,\Delta t}_{L,N}$ on $H_{L,N}$.
    \end{enumerate}

    \Steps
    \begin{enumerate}
        \item For each $ l \in \{ 0, \ldots, L-1\}$, compute the time-shifted vectors $\bm \phi_i^{(q)} = (\phi_{0l}^{(q)}, \ldots, \phi_{N-1,l}^{(q)})^\top \in \mathbb R^N$ with
            \begin{displaymath}
                \phi_{nl} = \phi_{n'l}, \quad n' = n + l \mod N.  
            \end{displaymath}
        \item \Return The $L\times L$ matrix $\bm U^{(q)} = [ U^{(q)}_{ij} ]_{i,j=0}^{L-1}$ with $U_{ij} =\bm \phi_i^\top \bm \phi_j^{(q)}/N $.
    \end{enumerate}
\end{algorithm}
\begin{remark*}
    The shift operator in~\eqref{eqShiftOp} differs somewhat from the operators used in refs.~\cite{BerryEtAl15,GiannakisEtAl15,Giannakis19,Giannakis19b,DasEtAl21}, which employ the non-unitary definition $\check U_N : \hat H_N \to \hat H_N$ with 
    \begin{displaymath}
        \check U_N f(x_n) = 
        \begin{cases}
            f(x_{n+1}), & 0 \leq n \leq N- 2, \\
            0, & n = N-1.
        \end{cases}
    \end{displaymath}
    The difference between the two approaches is inconsequential in the large-data limit, i.e., both $\hat U_{N,L}$ and $\check U_{N,L} := \bm \Pi_L \check U_N$ exhibit the $N\to\infty$ convergence in \cref{lemApprox} and \cref{corApprox}. Here, we have opted to work with $\hat U_N$ from \eqref{eqShiftOp}, for, as we will see in \cref{appQuantumChannel} below, the unitarity of this operator ensures that the induced Koopman operator on $\hat{\mathfrak B}_N$ is a quantum channel.
\end{remark*}

\subsection{\label{appQuantumChannel}Finite-dimensional quantum operations}

In the main text, we introduced the projected Koopman operator $\mathcal U^{(t)}_L : \mathfrak B_L \to \mathfrak B_L$, defined as 
\begin{displaymath}
    \mathcal U^{(t)}_L A = U^{(t)}_L A U^{(t)*}_L = U^{(t)}_L A U^{(-t)}_L;
\end{displaymath}
equivalently, $ \mathcal U^{(t)}_L = \bm \Pi_L \circ \mathcal U^t $. Letting $\varpi : \mathfrak B \to \mathfrak B$ be the trivial representation, $ \varpi(A) = A $, we have $\mathcal U^{(t)}_L = \Pi_L \varpi(U^t) \Pi_L $, so by Stinespring's theorem (see \cref{secPositivityPreserving}) $\mathcal U^{(t)}_L$ is completely positive. We also have that $\mathcal U^{(t)}_L = (\mathcal P^{(t)}_{L*})^*$, where $\mathcal P^{(t)}_{L*} : \mathfrak B_{L*} \to \mathfrak B_{L*}$ is the transfer operator on the predual of $\mathfrak B_L$, defined as $\mathcal P^{(t)}_{L*} A = U^{(-t)}_L A U^{(t)}_L $. 

Next, let $ \rho \in \mathfrak B_{L*}$ be a density operator. This operator extends to a density operator $\varrho \in \mathfrak B_*$ given by $ \varrho = \Pi_L \rho \Pi_L$. One can then verify that the transfer operator $\mathcal P^{(t)}_L : \mathfrak B_{L*} \to \mathfrak B_{L*}$ satisfies 
\begin{equation}
    \label{eqTraceNoninc}
    (\mathcal P^{(t)}_L \omega_\rho) \equiv \omega_{\mathcal P^{(t)}_{L*}\rho}  \bm 1_L = \tr((\mathcal U^{-t} \varrho) \Pi_L) \leq \lVert \Pi_L \rVert_{\mathfrak B} = 1.
\end{equation}
By virtue of this fact and the complete positivity of $\mathcal U^{(t)}_L$ it follows that $\mathcal U^{(t)}_L$ is a quantum operation. 

\begin{remark*}
    If $H_L$ happens to be a Koopman-invariant subspace of $H$, i.e., $U^t H_L = H_L$, then  $\tr((\mathcal U^{-t} \varrho) \Pi_L) = \tr(\mathcal U^{-t} \varrho) = 1 $ and $\mathcal U^{(t)}_L$ is a quantum channel. This property holds if and only if $H_L$ is an orthogonal direct sum of Koopman eigenfunctions. The existence of such distinguished subspaces of $H$ cannot be assumed for general measure-preserving dynamical systems. For instance, it is a standard result from ergodic theory that if the dynamical flow $\Phi^t$ is measure-theoretically mixing, then the Koopman operator $U^t : H \to H $ has only constant eigenfunctions \cite{Walters81}. 
\end{remark*}

In the data-driven setting of $\mathfrak B_{L,N}$, we employ an analogous construction based on the unitary shift operator $\hat U_N^q$. The shift operator induces a unitary $\hat{\mathcal U}^q_N : \hat{\mathfrak B}_N \to \hat{\mathfrak B}_N$ that acts by conjugation by $\hat U^q_N$, i.e., $\hat{\mathcal U}^q_N A = \hat U^q_N A \hat U^{q*}_N$. This operator is a quantum channel analogously to the Koopman operator $\mathcal U^t : \mathfrak B \to \mathfrak B$. For $L \leq N-1$ we define the projected shift operator  $\mathcal U^{(q)}_{L,N} : \mathfrak B_{L,N} \to \mathfrak B_{L,N}$ such that $\mathcal U^{(q)}_{L,N} A = U^{(q)}_{L,N} A U^{(q)*}$. This operator has entirely analogous properties to the projected Koopman operator $\mathcal U^{(t)}_L$; that is, $\mathcal U^{(q)}_{L,N}$ is a completely positive map whose associated transfer operator $\mathcal P^{(q)}_{L,N} : \mathfrak B^*_{L,N} \to \mathfrak B^*_{L,N}$ with $\mathcal P^{(q)}_{L,N} = \mathcal U^{(q)*}_{L,N}$ is trace-non-increasing (i.e., satisfies an analog of \eqref{eqTraceNoninc}).  Thus, $\mathcal U^{(q)}_{L,N}$ is a quantum quantum operation.

\begin{algorithm}
    \caption{\label{algQMDAStep}QMDA forecast--analysis step.}

    \Inputs
    \begin{enumerate}
        \item Forecast timesteps $J_\text{f} \in \mathbb N$; observation timesteps $J_\text{o} \in \mathbb N$.
        \item Koopman matrices $\bm U^{(1)}, \ldots, \bm U^{(J)} \in \mathbb M_L$ from \cref{algKoopman} with $ J = \max\{ J_\text{f}, J_\text{o} \}$. 
        \item Spectral bins $S_0, \ldots, S_{M-1} \subseteq R $ from \cref{algSpecBins}.
        \item Forecast observable $\bm A \in \mathbb M_L $ and spectral projectors $\bm E_0, \ldots, \bm E_{M-1} \in \mathbb M_L $ from \cref{algObservable}. 
        \item Matrix-valued effect $ \bm F : Y \to \mathbb M_L$ from \cref{algEffect}. 
        \item Initial density matrix $\bm \rho \in \mathbb M_L$.   
        \item Observation $ y \in Y $ at time $ J_\text{o}\, \Delta t$. 
    \end{enumerate}
    \Require All $L \times L $ matrices are representations of operators in $\mathfrak B_{L,N}$ in the same data-driven basis $\{\phi_{l,N}\}_{l=0}^{L-1}$ from \cref{algBasis}. All training data are generated by the same sequence of (unknown) time-ordered states $x_0, \ldots, x_{N-1} \in X $ with $ x_n = \Phi^{n\,\Delta t}(x_0)$, taken at a fixed sampling interval $\Delta t > 0$.
    \medskip

    \Outputs
    \begin{enumerate}
        \item Mean forecast $\bar f_0, \ldots, \bar f_{J_\text{f}} \in \mathbb R$ at lead time $ \tau_0, \ldots, \tau_{J_\text{f}}$ with $ \tau_j = j \, \Delta t$. 
        \item Forecast uncertainty $\sigma_0, \ldots, \sigma_{J_\text{f}} \in \mathbb R$ at lead time $\tau_0, \ldots, \tau_{J_\text{f}}$.
        \item Forecast probability vectors $ \bm p_0, \ldots, \bm p_{J_f} \in \mathbb R^M$ with $ \bm p_j = (p_{0j}, \ldots, p_{Mj})$. $p_{mj}$ is the probability that, at lead time $\tau_j$, the forecast observable $f$ lies in spectral bin $S_m$.    
        \item Posterior density matrix $\bm\rho^{(+)} \in \mathbb M_L$ at time $J_\text{o} \, \Delta t$. 

    \end{enumerate}
    \Steps
    \begin{enumerate}
        \item Set $\bm U^{(0)} = \Id$. 
        \item For each $ j \in \{ 0, \ldots, J\}$ compute the time-evolved density matrix $\bm \rho_j = \bm \sigma_j / C_j $ with $\bm \sigma_j = (\bm U^{(j)})^\top \bm \rho \bm U^{(j)}$ and $C_j = \tr \bm \sigma_j$. 
        \item \Return The mean forecasts $ \bar f_j = \tr( \bm \rho_j \bm A)$ for $ j \in \{ 0, \ldots, J_\text{f} \} $. 
        \item \Return The forecast uncertainties $ \sigma_j =( \tr( \bm \rho_j \bm A^2) - \bar f_j^2 )^{1/2}$ for $ j \in \{ 0, \ldots, J_\text{f} \} $. 
        \item \Return The probability vectors $\bm p_j$ with $ p_{mj} = \tr(\bm \rho_j E_m)$ for $ m \in \{0, \ldots, M-1 \} $ and $ j \in \{ 0, \ldots, J_\text{f} \}$.
        \item Compute the effect matrix $\bm E_y = \bm F'(y)$. 
        \item \Return The posterior density matrix 
            \begin{displaymath}
                \bm \rho^{(+)} = \frac{\bm E_y \bm \rho_{J_\text{o}} \bm E_y}{\tr(\bm E_y \bm \rho_{J_\text{o}} \bm E_y)}.
            \end{displaymath}
    \end{enumerate}
\end{algorithm}

\cref{algQMDAStep} describes the QMDA forecast of $f:X \to \mathbb R$ via the quantum operation $\mathcal U^{(q)}_{N,L}$ applied to the corresponding projected multiplication operators $\pi_{L,N} \hat f_N$. The computational complexity of this algorithm with respect to the dimension $L$ is dominated by matrix--matrix multiplication of $L\times L$ matrices, and is thus $O(L^3)$. \cref{algQMDAStepPure} specializes the forecasting procedure to pure (vector) states, which allow representation of density matrices $\bm \rho \in \mathbb M_L $ by their corresponding state vectors, $\bm \rho = \bm\xi  \bm \xi^\dag$ where $\bm \xi$ is a unit vector in $\mathbb C^L$. This leads to a reduction in complexity to $O(L^2)$. More generally, the computational cost of forecasting with rank-$k$ density matrices is $O(kL^2)$. \cref{algQMDAStep,algQMDAStepPure} also include the analysis step based on effect-valued maps, which we describe in \cref{appEffect}.   

\begin{algorithm}
    \caption{\label{algQMDAStepPure}QMDA forecast--analysis step, specialized to pure states.}

    \Inputs
    \begin{enumerate}
        \item Forecast timesteps $J_\text{f} \in \mathbb N$; observation timesteps $J_\text{o} \in \mathbb N$.
        \item Koopman matrices $\bm U^{(1)}, \ldots, \bm U^{(J)} \in \mathbb M_L$ from \cref{algKoopman} with $ J = \max\{ J_\text{f}, J_\text{o} \}$. 
        \item Spectral bins $S_0, \ldots, S_{M-1} \subseteq R $ from \cref{algSpecBins}.
        \item Forecast observable $\bm A \in \mathbb M_L $ and spectral projectors $\bm E_0, \ldots, \bm E_{M-1} \in \mathbb M_L $ from \cref{algObservable}. 
        \item Matrix-valued effect $ \bm F : Y \to \mathbb M_L$ from \cref{algEffect}. 
        \item Initial state vector $\bm \xi \in \mathbb C^L$.   
        \item Observation $ y \in Y $ at time $ J_\text{o}\, \Delta t$. 
    \end{enumerate}
    \Require All $L \times L $ matrices are representations of operators in $\mathfrak B_{L,N}$ in the same data-driven basis $ \{ \phi_{l,N} \}$ from \cref{algBasis}. All training data are induced by the same sequence of (unknown) time-ordered states $x_0, \ldots, x_{N-1} \in X $ with $ x_n = \Phi^{n\,\Delta t}(x_0)$, taken at a fixed sampling interval $\Delta t > 0$.
    \medskip

    \Outputs
    \begin{enumerate}
        \item Mean forecast $\bar f_0, \ldots, \bar f_{J_\text{f}} \in \mathbb R$ at lead time $ \tau_0, \ldots, \tau_{J_\text{f}}$ with $ \tau_j = j \, \Delta t$. 
        \item Forecast uncertainty $\sigma_0, \ldots, \sigma_{J_\text{f}} \in \mathbb R$ at lead time $\tau_0, \ldots, \tau_{J_\text{f}}$.
        \item Forecast probability vectors $ \bm p_0, \ldots, \bm p_{J_f} \in \mathbb R^M$ with $ \bm p_j = (p_{0j}, \ldots, p_{Mj})$. $p_{mj}$ is the probability that, at lead time $\tau_j$, the forecast observable $f$ lies in spectral bin $S_m$.    
        \item Posterior state vector $\bm\xi^{(+)} \in \mathbb C^L $ at time $J_\text{o} \, \Delta t$. 
    \end{enumerate}
    \Steps
    \begin{enumerate}
        \item Set $\bm U^{(0)} = \Id$. 
        \item For each $ j \in \{ 0, \ldots, J\}$ compute the time-evolved state vector $\bm \xi_j = \bm u_j / \lVert \bm u_j \rVert_2$ with $ \bm u_j =  \bm (U^{(j)})^\top \bm \xi$. 
        \item \Return The mean forecasts $ \bar f_j = \bm\xi_j^\dag \bm A \bm \xi_j$ for $ j \in \{ 0, \ldots, J_\text{f} \} $. 
        \item \Return The forecast uncertainties $ \sigma_j =( \bm \xi_j^\dag \bm A^2 \bm \xi_j - \bar f_j^2 )^{1/2}$ for $ j \in \{ 0, \ldots, J_\text{f} \} $. 
        \item \Return The probability vectors $\bm p_j$ with $ p_{mj} = \bm \xi_j^\dag \bm E_m \bm \xi_j$ for $ m \in \{0, \ldots, M-1 \} $ and $ j \in \{ 0, \ldots, J_\text{f} \}$.
        \item Compute the effect matrix $\bm E_y = \bm F(y)$. 
        \item \Return The posterior state vector 
            \begin{displaymath}
            \bm \xi^{(+)} = \frac{\bm E_y \bm \xi_{J_\text{o}}}{\lVert \bm E_y \bm \xi_{J_\text{o}}\rVert_2}.
            \end{displaymath}
    \end{enumerate}
\end{algorithm}

\subsection{\label{appChannelConsistency}Channel consistency}

The asymptotic consistency of the quantum operations $\mathcal U^{(q)}_{L,N}$ readily follows from the results established in the previous subsections. 

First, given $A \in \mathfrak B$ and $A_L = \bm \Pi_L A$, it is a direct consequence of the strong convergence of $U^{(t)}_L$ to $U^t$ and of $A_L$ to $A$, together with the uniform boundedness of these operators, that $\mathcal U^{(t)}_L A_L $ converges to $\mathcal U^t A $ in the strong topology of $\mathfrak B$, i.e., 
\begin{equation}
    \label{eqUConv}
    \lim_{L\to\infty} (\mathcal U^{(t)}_L A_L) f = (\mathcal U^t A) f, \quad \forall f \in H. 
\end{equation}
As a result, for every normal state $\omega_\rho \in \ssb $ induced by a density operator $\rho \in \mathfrak B_*$, \eqref{eqRhoALConv} yields
\begin{equation}
    \label{eqPRhoALConv}
    \lim_{L\to\infty} (\mathcal P^{(t)}_L \omega_{\rho_L}) A_L = \lim_{L\to\infty} \omega_{\rho_L} (\mathcal U^{(t)}_L A_L) = \omega_\rho A,
\end{equation}
where $\rho_L = \bm \Pi_L \rho  / \tr(\bm \Pi_L \rho) \in \mathfrak B_L $ are the projected density operators induced by $\rho$. Equation~\eqref{eqPRhoALConv} establishes the consistency of the quantum operations $\mathcal U^{(t)}_L$ with the channel $\mathcal U^{(t)}$ in the infinite-dimension limit. 

Next, turning to the consistency of $\mathcal U^{(q)}_{L,N}$ in the large-data ($N\to\infty$) limit, we restrict attention to states in the subset $\scb \subset \ssb$ from \cref{appStateDataApprox} and elements $A \in \mathfrak B$ satisfying~\eqref{eqIota} and~\eqref{eqIotaN}. We also consider evolution times $t_q = q\,\Delta t$ with $q \in \mathbb N$. Under these assumptions, it follows from~\eqref{eqRhoLALConv} that for $\mu$-a.e.\ initial state $x_0$,
\begin{equation}
    \label{eqPRhoALNConv}
    \lim_{L\to\infty} \lim_{N\to\infty} (\mathcal P^{(q)}_{L,N} \omega_{\rho_{L,N}}) A_{L,N} = \lim_{L\to\infty} (\mathcal P^{(t_q)}_L \omega_{\rho_L}) A_L =  (\mathcal P^t\omega_\rho) A.
\end{equation}
Equation~\eqref{eqPRhoALNConv} holds, in particular, for data-driven, projected multiplication operators $A_{L,N} = \pi_{L,N} \hat f_N$ associated with continuous functions $f : X \to \mathbb R$ and pure states $\hat \rho_{N,L}$ induced by continuous state vectors, as in the QMDA experiments described in the main text.

\subsection{\label{appEffect}Effect-valued feature map}

In this subsection, we describe the construction and properties of the effect-valued feature map $\mathcal F : Y \to \mathcal E(\mathfrak B) $ and its finite-rank counterparts, $\mathcal F_L : Y \to \mathcal E(\mathfrak B_L)$ and $\mathcal F_{L,N} : Y \to \mathcal E(\mathfrak B_{L,N}) $, used in the analysis step of QMDA. As stated in the main text, we build these maps using a continuous, symmetric kernel on observations space $\psi : Y \times Y \to [0,1]$ and a family of data-dependent symmetric, continuous kernels $\psi_N : Y \times Y \to [0, 1]$ with $N \in \mathbb N$, such that, as $N\to\infty$, the pullback kernels $w_N : X \times X \to [0, 1 ]$ with $w_N(x,x') = \psi_N(h(x),h(x')) $ converge uniformly to $w: X \times X \to [0,1]$ with $ w(x,x') = \psi(h(x),h(x'))$. 

As a concrete example, in the L96 and CCSM4 experiments described in the main text we employ variable-bandwidth, bump kernels on $Y = \mathbb R^d$, 
\begin{equation}
    \label{eqKBump}
    \psi_N(y,y') = \eta_\text{bump}\left( \frac{d(y,y')}{\epsilon_* \sqrt{b_N(y)b_N(y')} } \right).
\end{equation}
In \eqref{eqKBump}, $\eta_\text{bump} : \mathbb R \to \mathbb R $ is the bump function
\begin{equation}      
    \label{eqEtaBump}
    \eta_\text{bump}(u) = 
    \begin{cases}
        e^{-1/(1-u^2)}, & u \in (-1, 1),\\
        0, & \text{otherwise},
    \end{cases}
\end{equation}
$d: Y \times Y \to \mathbb R_+$ is the Euclidean distance, $b_N : Y \to \mathbb R_+$ is a bandwidth function obtained by applying \cref{algBandwidth} to the training data $y_0, \ldots, y_{N-1} \in Y$, and $\epsilon_* $ is a positive bandwidth parameter determined via \cref{algTuning}. Note that $b_N$ and $\epsilon_*$ in \eqref{eqKBump} are different from the bandwidth function and bandwidth parameter used in \eqref{eqKVB} (the former are based on data $y_n \in Y$ whereas the latter are based on data $z_n \in Z$). 

The choice of kernel in \eqref{eqKBump} is motivated by the fact that the classical Bayesian analysis step of data assimilation can be modeled as an effect-valued map $\check F : Y \to \mathcal E(\mathfrak A) $ for the abelian algebra $ \mathfrak A = L^\infty(X,\mu)$, where $ \check F(y) = \chi_{Y^{-1}(y)} $ and $\chi_S : X \to \{ 0, 1 \}$ denotes the characteristic function of a set $S \subseteq X$. The effect-valued map $F_N : Y \to \mathcal E(\mathfrak A)$ induced by the kernel in \eqref{eqKBump}, $F_N(y) = \psi_N(y, h(\cdot))$, can be thought of as a smoothed version of $\check F $. In L96 and CCSM4 experiments not reported here, we found that using a fixed-bandwidth kernel in the analysis step (i.e., setting $b_N(y) = b_N(y') = 1$ in~\eqref{eqKBump} led to a noticeable reduction of forecast skill, particularly in the higher-dimensional CCSM4 case. We also ran experiments using the Gaussian shape function $\eta_\text{gauss}$ (as opposed to $\eta_\text{bump}$) in the definition of the kernel $\psi_N$, using either of the variable- or fixed-bandwidth versions. The results were generally comparable to those reported in the main text, though we found that the bump kernel did provide a modest amount of skill improvement over the Gaussian kernel.       

Let $F:Y \to \mathcal E(\mathfrak A)$ be the effect-valued map associated with the kernel $\psi$, 
\begin{displaymath}
    \psi(y,y') = \eta_\text{bump}\left( \frac{d(y,y')}{\epsilon \sqrt{b(y)b(y')} } \right),
\end{displaymath}
where the bandwidth function $b : Y \to \mathbb R_+ $ is the uniform limit of $b_N$ (note that we will not need this map in actual numerical applications). As described in the main text, in the setting of the non-abelian algebra $\mathfrak B$, we promote $F$ to an operator-valued map $\mathcal F : Y \to \mathcal E(\mathfrak B) $, where $\mathcal F(y) = \pi(F(y)) $. Moreover, we introduce projected and data-driven versions of these maps, given by $ \mathcal F_L : Y \to \mathcal E(\mathfrak B_L)$ and $\mathcal F_{L,N} : Y \to \mathcal E(\mathfrak B_{L,N})$, respectively, where $\mathcal F_L = \pi_L \circ \mathcal F $, $\mathcal F_{L,N} = \pi_{L,N} \circ \hat{\mathcal F}_N$, and $\hat F_N : Y \to \mathcal E(\hat{\mathfrak A}_N)$ is given by restriction of $F_N$ on the set of training states $X_N$, $\hat F_N(y) =\iota_N(F_N(y))$. State conditioning (analysis) based on these maps has the following consistency properties.

\begin{proposition}
    \label{propAnalysis}
    Let $\omega_\rho $ be a state of $\mathfrak B$ in $\scb$ and $y$ an observation in $Y$. Let $\bm \rho_{L,N}^{(+)}$ and $\bm \rho_L^{(+)}$ be the matrix representations of the conditional states $\omega_{\rho_{L,N}} \rvert_{\mathcal F_{L,N}(y)} \in S(\mathfrak B_{L,N} )$ and $ \omega_{\rho_L} \rvert_{\mathcal F_L(y)}$ obtained via~\eqref{eqQBayes} in the $\{ \phi_{l,N} \}$ and $ \{ \phi_l \}$ bases of $H_{L,N}$ and $H_L$, respectively. Then the following hold.
    \begin{enumerate}[label=(\alph*)]
        \item For $\mu$-a.e.\ initial state $x_0 \in X$, $\lim_{N\to\infty} \bm \rho_{L,N}^{(+)} = \bm \rho_L^{(+)}$.  
        \item As $L\to\infty$, $\omega_{\rho_L} \rvert_{\mathcal F_L(y)}$ converges to $\omega_\rho\rvert_{\mathcal F(y)}$ in the trace norm topology of $\ssb$.
    \end{enumerate}
\end{proposition}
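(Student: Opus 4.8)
The plan is to treat the conditioning rule~\eqref{eqQBayes} as a fixed algebraic recipe---form the square-root sandwich $e \mapsto \sqrt{e}\,\rho\,\sqrt{e}$ and normalize by the validity $\omega_\rho e$---and to feed into it the convergence results already in hand. The ingredients are: (i) since $h$ is continuous (\cref{assump2}), $F(y)=\psi(y,h(\cdot))$ lies in $C(X)$, so $\mathcal F(y)=\pi F(y)$ is multiplication by a continuous function and hence satisfies~\eqref{eqIota}--\eqref{eqIotaN2} as in~\eqref{eqPiF}; (ii) the projected states satisfy $\rho_L\to\rho$ in trace norm by~\eqref{eqRhoLConv}, and their data-driven matrix representations satisfy $\bm\rho_{L,N}\to\bm\rho_L$ by the results of \cref{appStateDataApprox}; and (iii) $\mathcal F_L(y)=\bm\Pi_L\mathcal F(y)$ converges strongly to $\mathcal F(y)$ by~\eqref{eqALConv}. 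Both parts then reduce to checking that $e\mapsto\sqrt e$, multiplication, and the trace are continuous in the relevant topology, together with the fact that the limiting validities are nonzero (implicit in the conditional states being defined via~\eqref{eqQBayes}).

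For Part~(a) everything lives among $L\times L$ matrices of fixed size. First I would verify the hypotheses of \cref{lemApprox} for the data-dependent effect, with $A=\pi F(y)$, $\tilde A$ the multiplication by $F(y)$ on $C(X)$, and $\hat A_N$ the multiplication by $\psi_N(y,h(\cdot))$ on $\hat H_N$: the residual obeys $\lVert R_N g\rVert_{\hat H_N}\le\lVert\psi_N(y,h(\cdot))-\psi(y,h(\cdot))\rVert_{C(X)}\lVert g\rVert_{C(X)}\to 0$ by the uniform convergence $\psi_N\to\psi$, and $\lVert\hat A_N\rVert_{\hat{\mathfrak B}_N}\le 1$ since $\psi_N$ is $[0,1]$-valued, giving~\eqref{eqIotaN} and~\eqref{eqIotaN2}. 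Then \cref{corApprox} yields $\bm F_{L,N}(y)\to\bm F_L(y)$ for the matrix representations of $\mathcal F_{L,N}(y)$ and $\mathcal F_L(y)$, while \cref{appStateDataApprox} yields $\bm\rho_{L,N}\to\bm\rho_L$. Because these matrices are positive semidefinite in the orthonormal $\{\phi_{l,N}\}$ basis, and the principal square root, matrix multiplication, and trace are continuous on matrices (the square root being continuous on the positive-semidefinite cone), the numerator $\sqrt{\bm F_{L,N}(y)}\,\bm\rho_{L,N}\,\sqrt{\bm F_{L,N}(y)}$ and the scalar $\tr(\bm\rho_{L,N}\bm F_{L,N}(y))$ converge to their $L$-level counterparts. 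Since $\tr(\bm\rho_L\bm F_L(y))=\omega_{\rho_L}(\mathcal F_L(y))\neq 0$, the denominators are eventually bounded away from zero, and the quotient converges, giving $\bm\rho_{L,N}^{(+)}\to\bm\rho_L^{(+)}$.

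Part~(b) is the delicate one, because $\mathcal F_L(y)$ converges to $\mathcal F(y)$ only strongly, not in operator norm, while the target is trace-norm convergence. I would first upgrade (iii) to strong convergence of the square roots $\sqrt{\mathcal F_L(y)}\to\sqrt{\mathcal F(y)}$: the $\mathcal F_L(y)$ are effects with spectra in $[0,1]$ and hence uniformly bounded, products of uniformly bounded strongly convergent self-adjoint operators converge strongly, so polynomials of $\mathcal F_L(y)$ converge strongly, and approximating $\sqrt{\,\cdot\,}$ uniformly by polynomials on $[0,1]$ transfers this to the square roots via continuous functional calculus. Writing $S_L=\sqrt{\mathcal F_L(y)}$ and $S=\sqrt{\mathcal F(y)}$, I would then prove $S_L\rho_L S_L\to S\rho S$ in trace norm via the splitting $S_L\rho_L S_L - S\rho S = S_L(\rho_L-\rho)S_L + (S_L\rho S_L - S\rho S)$: the first term is bounded by $\lVert\rho_L-\rho\rVert_{\mathfrak B_*}$ using $\lVert S_L\rVert_{\mathfrak B}\le 1$ and vanishes by~\eqref{eqRhoLConv}, and the second, for the fixed trace-class $\rho$, is handled by a finite-rank truncation $\rho^{(J)}=\sum_{j<J}r_j\langle\xi_j,\cdot\rangle\xi_j$, for which $S_L\rho^{(J)}S_L=\sum_{j<J}r_j\langle S_L\xi_j,\cdot\rangle S_L\xi_j$ converges in trace norm because each rank-one summand converges (its constituent vectors satisfy $S_L\xi_j\to S\xi_j$ in $H$), with the truncation error controlled uniformly in $L$ by $\lVert\rho-\rho^{(J)}\rVert_{\mathfrak B_*}$. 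The denominator $\tr(\rho_L\mathcal F_L(y))=\tr(S_L\rho_L S_L)$ then converges to $\tr(S\rho S)=\omega_\rho(\mathcal F(y))\neq 0$ by continuity of the trace on $\mathfrak B_*$, and dividing gives $\rho_L\rvert_{\mathcal F_L(y)}\to\rho\rvert_{\mathcal F(y)}$ in trace norm, which is the asserted convergence of $\omega_{\rho_L}\rvert_{\mathcal F_L(y)}$ to $\omega_\rho\rvert_{\mathcal F(y)}$ in the trace-norm topology of $\ssb$.

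The main obstacle is precisely this trace-norm convergence of the numerator in Part~(b): strong operator convergence of $\sqrt{\mathcal F_L(y)}$ is too weak to drive a trace-norm limit directly, and the finite-rank truncation of $\rho$ is the device that bridges the gap, trading the absent operator-norm convergence for the trace-norm summability of the spectral decomposition of $\rho$. Once this is secured, Part~(a) is essentially a continuity statement for finite-dimensional matrix operations, and the nonvanishing of the limiting validities---already assumed so that the conditional states exist---is all that controls the normalizations in both limits.
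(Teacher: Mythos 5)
Your proof is correct, and its skeleton---verifying \eqref{eqIota}, \eqref{eqIotaN}, and \eqref{eqIotaN2} for the effect operators and then invoking \cref{lemApprox}, \cref{corApprox}, and \eqref{eqRhoLConv}---is the same as the paper's. The differences lie in how the square roots in \eqref{eqQBayes} are handled and in how much of the limiting argument is made explicit. The paper introduces the auxiliary map $\tilde{\mathcal F}(y) = \tilde \pi(F^{1/2}(y))$, pushing the square root down to the function level (where $\sqrt{\pi g} = \pi(g^{1/2})$ for $g \geq 0$), feeds this into the approximation machinery, and then simply asserts that the claims follow; in particular, it never spells out how \emph{strong} (rather than norm) convergence of the projected effects yields \emph{trace-norm} convergence of the conditional states in Part~(b), nor does it address the fact that $\sqrt{\bm \Pi_L \mathcal F(y)}$---the operator that \eqref{eqQBayes} actually requires---differs in general from $\bm \Pi_L \pi(F^{1/2}(y))$, which is what the auxiliary-map route directly controls (this is the same distinction that motivates the surrogate $\mathcal F'_{L,N}$ in \cref{appEffect}). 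You instead take square roots of the compressed effects themselves, prove $\sqrt{\mathcal F_L(y)} \to \sqrt{\mathcal F(y)}$ strongly via polynomial approximation of $t \mapsto \sqrt t$ on $[0,1]$ together with uniform boundedness, and then pass from strong to trace-norm convergence by truncating the spectral decomposition of $\rho$ and using $\lVert S_L (\rho - \rho^{(J)}) S_L \rVert_{\mathfrak B_*} \leq \lVert \rho - \rho^{(J)} \rVert_{\mathfrak B_*}$. That truncation argument is exactly the step the paper leaves implicit, and your treatment is more faithful to the proposition as stated; what the paper's shortcut buys is brevity, exploiting that at the level of $\mathfrak B$ (before projection) the square root of the effect is again a multiplication operator satisfying the intertwining hypotheses. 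Your Part~(a) matches the paper's intent, with the added routine observations that the square root, product, and trace are continuous on fixed-size positive-semidefinite matrices and that the nonvanishing limiting validity keeps the normalizations under control.
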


\begin{proof}
    We use an auxiliary map $\tilde{\mathcal F} : Y \to \mathfrak C$, defined as $\tilde{\mathcal F}(y) = \tilde \pi(F^{1/2}(y))$. For each $y \in Y$, we have  
\begin{displaymath}
    \iota \circ (\tilde{\mathcal F}(y)) = (\mathcal F(y)) \circ \iota
\end{displaymath}
by construction. Moreover, by our assumed uniform convergence of $w_N$ to $w$, we have that for every $ f \in C(X)$, the residual 
\begin{displaymath}
    R_N f  = \left( \iota_N \circ (\tilde{\mathcal F}(y)) \right) f - \left (\hat{\mathcal F}_N(y) \circ \iota_N \right) f
\end{displaymath}
has vanishing $\hat H_N$ norm as $N\to\infty$, for $\mu$-a.e.\ initial condition $x_0$. Thus, \eqref{eqIota} and~\eqref{eqIotaN} hold for $A = \mathcal F(y)$, $\tilde A = \tilde{\mathcal F}(y)$, and $A_N = \hat{\mathcal F}_N(y)$, and correspondingly \eqref{eqRhoLConv}, \cref{lemApprox}, and \cref{corApprox} also hold. The claims of the proposition follow.
\end{proof}

Computationally, a drawback of using $\mathcal F_{L,N}$ for state conditioning is that evaluation of \eqref{eqQBayes} requires the square root  $\sqrt{\mathcal F_{L,N}(y)}$. Specifically, we have $\omega_{\rho_{L,N}}\rvert_{\mathcal F_{L,N}(y)} = \omega_{\rho_{L,N}^{(+)}}$, where
\begin{equation}
    \label{eqDensityUpdate}
     \rho_{L,N}^{(+)} = \frac{\sqrt{\mathcal F_{L,N}(y)} \rho_{L,N} \sqrt{\mathcal F_{L,N}(y)}}{\tr(\sqrt{\mathcal F_{L,N}(y)} \rho_{L,N} \sqrt{\mathcal F_{L,N}(y)})},
\end{equation}
and computing $\sqrt{\mathcal F_{L,N}(y)}$ requires computing the square root of the matrix $\bm F_{L,N}(y) = [\langle \phi_{i,N}, \mathcal F_{L,N}(y) \phi_{j,N} \rangle_N]_{i,j=0}^{L-1}$ representing $\mathcal F_{L,N}(y)$ in the $\{\phi_{l,N}\}$ basis of $H_{L,N}$. To avoid having to perform this expensive operation at every observational update, in applications we replace $\sqrt{\mathcal F_{L,N}(y)}$ by $\mathcal F'_{L,N}(y)$, where $\mathcal F'_{L,N} : Y \to \mathcal E(\mathfrak B_{L,N})$ is the effect-valued map defined as
\begin{displaymath}
    \mathcal F'_{L,N}(y) = \pi_{L,N}(\hat F^{1/2}_N(y)).
\end{displaymath}
The construction of this map is described in \cref{algEffect}. Using $\mathcal F'_{L,N}$, the update of a density operator $\rho_{L,N} \in \mathfrak B_{L,N*}$ given an observation $y\in Y$ becomes (cf.~\eqref{eqDensityUpdate})
\begin{equation}
    \label{eqDensityUpdate2}
    \rho_{L,N}^{(+)} = \frac{\mathcal F'_{L,N}(y) \bm \rho_{L,N}\mathcal F'_{L,N}(y)}{\tr(\mathcal F'_{L,N}(y) \bm \rho_{L,N}\mathcal F'_{L,N}(y))}.
\end{equation}
In the limit of $L\to\infty$ after $N\to\infty$, updating via \eqref{eqDensityUpdate2} consistently recovers $\omega_\rho\rvert_{\mathcal F(y)}$ analogously to Proposition~\ref{propAnalysis}. See step~7 of \cref{algQMDAStep} for the matrix function that implements \eqref{eqDensityUpdate2} in the $ \{ \phi_{l,N} \}$ basis of $H_{L,N}$. Step~7 of \cref{algQMDAStepPure} specializes the state update procedure to pure states. 

\begin{algorithm}
    \caption{\label{algEffect}Effect-valued feature map.}
    \Inputs
    \begin{enumerate}
        \item Kernel function $\kappa : Y \times Y \to [0, 1]$.
        \item Training data $y_0, \ldots, y_{N-1} \in Y$.
        \item Basis vectors $\bm \phi_0, \ldots, \bm \phi_{L-1}$ from \cref{algBasis}.
    \end{enumerate}
    \Require The training data $z_n$ used in the computation of $\bm \phi_l$ are generated by the same dynamical states $x_n \in X$ underlying $f_n$, i.e., $z_n = z(x_n)$ and $f_n = f(x_n)$.  
    \medskip

    \Outputs
    \begin{enumerate}
        \item Matrix-valued map $\bm F'_{L,N} : Y \to \mathbb M_L $ representing the effect-valued function $\mathcal F'_{L,N}: Y \to \mathcal E(\mathfrak B_{L,N})$.
    \end{enumerate}
    \Steps
    \begin{enumerate}
        \item Construct the feature map $ \bm f'_N : Y \to \mathbb R^N$ where $\bm f'_N(y) = ( \psi_N^{1/2}(y,y_0), \ldots, \psi_N^{1/2}(y,y_{N-1}))^\top$.  
        \item \Return The function $\bm F'_{L,N} : Y \to \mathbb M_L$, where $ \bm F_{L,N}(y) = \bm E = [E_{ij}]_{i,j=0}^{L-1} $ and $E_{ij} = \bm \phi_i^\top ( \bm f'_N(y) \odot \bm \phi_j) / N $. 
    \end{enumerate}
\end{algorithm}

\section{\label{appMetrics}Forecast skill metrics}

We assess the skill of the QMDA forecasts in the main text using normalized root mean square error (NRMSE) and anomaly correlation (AC) scores. Using throughout the notation of the main text and \cref{algQMDA}, we perform forecasts of $f$ at lead times $\tau_j = j \, \Delta t$ with $ j \in \{ 0, \ldots, J_\text{f} -1 \}$ given initial data $\hat y_0, \ldots, \hat y_{\hat N-1} \in Y$. We let $\bar f_{n,j} \in \mathbb R$ be the mean forecast at lead time $\tau_j$ initialized with data $\hat y_n$. Given the values $\hat f_0, \ldots, \hat f_{\hat N + J_\text{f}-1} \in \mathbb R$ of $f$ in the verification interval, the error of the forecast mean $\bar f_{n,j}$ relative to the true value of $f$ is $\varepsilon_{n,j} = \bar f_{n,j} - \hat f_{n+j}$. 

Let $\mean_N f $ and $\var_N f$ be the empirical mean and variance of $f$ computed from the training data, 
\begin{displaymath}
    \mean_N f = \int_X f \, d\mu_N = \frac{1}{N} \sum_{n=0}^{N-1} f_n, \quad \var_N f = \int_X ( f - \mean_N f)^2 \, d\mu_N = \frac{1}{N} \sum_{n=0}^{N-1} ( f_n - \mean_N f )^2.
\end{displaymath}
We define the NRMSE and AC scores for lead time $\tau_j$ as 
\begin{align*}
    \NRMSE(\tau_j) &= \sqrt{\frac{1}{\hat N \var_N f}\sum_{n=0}^{\hat N-1} \varepsilon_{n,j}^2 }, \\
    \AC(\tau_j) &= \frac{1}{\hat N \var_N f} \sum_{n=0}^{\hat N-1} (\bar f_{n,j} - \mean_N f)(\hat f_{n+j} - \mean_N f),
\end{align*}
respectively. 

NRMSE values close to 0 and AC values close to 1 indicate high forecast skill. NRMSE values approaching 1 indicate loss of skill as the expected forecast error is comparable to the standard deviation of the forecast observable. In climate dynamics applications, such as ENSO forecasting, $\AC = 0.6$ or $\AC = 0.5$ are commonly used thresholds indicating loss of skill. 

\section{\label{appDatasets}Dataset description}

In this section, we describe the properties of the L96 multiscale and CCSM4 datasets used in the experiments presented in the main text. 

\subsection{\label{appL96}L96 multiscale} 

We integrate the L96 multiscale system in \eqref{eqL96} in MATLAB using the built-in stiff solver |ode15s|, sampling the numerical trajectory every $\Delta t = 0.05 $ model time units. We note that the use of a stiff solver is important for numerical accuracy due to the timescale separation between the $\lx_k$ and $\ly_{j,k}$ variables occurring at small $\varepsilon$. As noted in the main text, our training and test data are sampled on independent dynamical trajectories. The initial conditions for the trajectory $x_0, \ldots, x_{N-1} \in \mathbb R^{J(K+1)}$ underlying the training data are $ (\lx_1, \ldots, \lx_K) = (1, 0, \ldots, 0 ) \in \mathbb R^K $ and $(\ly_{1,k}, \ldots, \ly_{J,k}) = (1, 0, \ldots, 0 ) \in \mathbb R^J $ for each $k \in \{ 1, \ldots, K \}$. Similarly, we initialize the test trajectory $\hat x_0, \ldots, \hat x_{\hat N-1} \in \mathbb R^{J(K+1)} $ at  $ (\lx_1, \ldots, \lx_K) = (1.2, 0, \ldots, 0 ) \in \mathbb R^K $ and $(\ly_{1,k}, \ldots, \ly_{J,k}) = (1.2, 0, \ldots, 0 ) \in \mathbb R^J $ for each $k \in \{ 1, \ldots, K \}$. Starting from these initial conditions, we let the two trajectories equilibrate to the attractor over a time interval of $10^4 \, \Delta t = 500 $ model time units before collecting the first samples, $x_0$ and $\hat x_0$.  

\subsection{\label{appCCSM4}CCSM4}

We sample data every $\Delta t = 1$ month, spanning a 1,300-year period from a control integration of CCSM4 forced with fixed pre-industrial concentrations of greenhouse gases \cite{CCSM410}. Following ref.~\cite{WangEtAl20}, the observation map $h : X \to Y \equiv \mathbb R^d$ returns the monthly averaged sea surface temperature (SST) field on the model’s native ocean grid (of approximately 1\degr nominal resolution) over the Indo-Pacific longitude-latitude box 28\degr E--70\degr W, 30\degr S--20\degr N. The number of gridpoints within this domain (which corresponds to the observation space dimension) is $d= \text{44,414}$. As the forecast observable $f: X\to \mathbb R$, we use the model's Ni\~no~3.4 index---this is defined as the area-averaged SST anomaly over the domain 170\degr W--120\degr W, 5\degr S--5\degr N relative to a monthly climatology computed over the training period. Specifically, let $\tilde h : X \to \mathbb R^{\tilde d}$ denote the observable representing the SST field over the Ni\~no~3.4 region. For each $ m \in \{ 1, \ldots, 12\}$, define the monthly climatology $ \bar h^{(m)} \in \mathbb R^{\tilde d}$ as 
\begin{displaymath}
    \bar h^{(m)} = \frac{1}{N_\text{y}} \sum_{\substack{0 \leq n \leq N-1\\ \mnth(n) = m}} \tilde h(x_n),
\end{displaymath}
where $\mnth(n) := (n \mod 12) + 1$ is the calendar month associated with the $n$-th sample in the training data.

\section*{Acknowledgments}
We thank Travis Russell for helpful discussions and feedback on this work. 

\bibliographystyle{siamplain}

\end{document}